\documentclass[11pt,a4paper]{article}
\usepackage[a4paper]{geometry}
\usepackage{amssymb,latexsym,amsmath,amsfonts,amsthm}
\usepackage{graphicx}
\usepackage{epstopdf}
\usepackage{float,amsmath,amssymb,mathrsfs,bm,multirow,graphics}
\usepackage{tikz}
\usepackage{pgflibraryshapes}
\usetikzlibrary{arrows,decorations.markings}
\usepackage{subfig}
\usepackage[percent]{overpic}
\usepackage{fullpage}
\usepackage{comment,verbatim}
\usepackage[hidelinks]{hyperref}
\usepackage{color}
\usepackage{mathrsfs}
\usepackage[title,titletoc]{appendix}
\usepackage{ulem}
\usepackage{enumitem}
\usepackage{mathabx}
\usetikzlibrary{arrows.meta, decorations.markings}

\DeclareMathOperator{\tr}{Tr}

\newcommand{\Bes}{\mathrm{Bes}}

\newcommand{\Ai}{\mathrm{Ai}}

\newcommand{\ud}{\,\mathrm{d}}

\newcommand{\wtil}{\widetilde}

\newcommand{\what}{\widehat}

\newcommand{\ii}{\mathrm{i}}

\newcommand{\Boh}{\mathcal{O}}

\newtheorem{theorem}{Theorem}[section]

\newtheorem{proposition}[theorem]{Proposition}

\newtheorem{corollary}[theorem]{Corollary}

\newtheorem{rhp}[theorem]{RH problem}
\newtheorem{assumption}[theorem]{Assumption}

\theoremstyle{definition}

\theoremstyle{remark}

\numberwithin{equation}{section}

\begin{document}
	\title{Edgeworth expansions of extreme eigenvalue distributions for random unitary ensembles}
	
	\author{Junwen Liu\footnotemark[1],\quad Qianlu Mao\footnotemark[2], \quad Lun Zhang\footnotemark[3]}
	
	\renewcommand{\thefootnote}{\fnsymbol{footnote}}
	\footnotetext[1]{Department of Mathematics, The University of Hong Kong, Pokfulam Road, Hong Kong. E-mail: jwliu26@hku.hk}
	\footnotetext[2]{School of Mathematical Sciences, Fudan University, Shanghai 200433, P. R. China. E-mail: qianlumao23\symbol{'100}m.fudan.edu.cn}
	\footnotetext[3]{School of Mathematical Sciences, Center for Applied Mathematics and Shanghai Key Laboratory for Contemporary Applied Mathematics, Fudan University, Shanghai 200433, P. R. China. E-mail: lunzhang\symbol{'100}fudan.edu.cn}
	
	\date{}
	
	\maketitle

	\begin{abstract}
    In this paper, we establish Edgeworth expansions of extreme eigenvalue distributions for two types random unitary ensembles at the spectral edges and reveal certain universal structures for the correction terms. More precisely, we show the correlation kernels for the Gaussian-type unitary ensembles and Laguerre-type unitary ensembles admit  full expansions at the soft edge and the hard edge, respectively. The coefficients of the correction terms are given by finite sums of Airy function or the Bessel functions of the first kind and their derivatives with polynomial coefficients. By lifting the kernel expansion to the associated Fredholm determinant, we further obtain full expansions for the largest eigenvalue distribution of the Gaussian-type unitary ensembles and the smallest eigenvalue distribution of the Laguerre-type unitary ensembles. In addition, the first correction term therein involves the derivatives of the leading term.

		% In classical probability, Edgeworth expansions give higher-order corrections to Gaussian limits. We prove edge analogues for two classes of unitary orthogonal polynomial ensembles with varying exponential weights: Gaussian-type unitary ensembles on $\mathbb{R}$ at a soft edge, and Laguerre-type ensembles on $(0,\infty)$ at a hard edge. In the soft-edge regime, the correction kernels are finite sums of Airy functions and their derivatives with polynomial coefficients in the scaled variables; consequently the largest-eigenvalue distribution has a Fredholm determinant expansion. In the hard-edge regime, the correction kernels have the analogous finite form in Bessel functions of the first kind and their derivatives, and the smallest-eigenvalue distribution admits a corresponding Fredholm determinant expansion.
	\end{abstract}
	
	\tableofcontents
	
\section{Introduction}
In the classical probability theory, Edgeworth expansions refine the 
central limit theorem by expressing higher-order corrections to the 
Gaussian limit in terms of Hermite polynomials and cumulants \cite{Wallace1958}. 
While the normal distribution is replaced by various different universal 
laws in random matrix theory \cite{AGZ10,Mehta}, it remains a fundamental 
issue to investigate the corresponding correction terms. For many applied purposes,
it is essential to understand how closely the asymptotic limit approximates the finite sample distributions \cite{Johnstone2009,Johnstone2008}. 
This paper deals with two classes of random unitary ensembles, which particularly include the classical Gaussian unitary ensemble (GUE) and the Laguerre unitary ensemble (LUE) as 
special cases. One class consists of $n \times n$ Hermitian matrices 
$M$ distributed according to the probability measure
\begin{equation}\label{eq:Gaussian}
	\frac{1}{Z_n^V}\exp(-n\operatorname{Tr} V(M))\operatorname{d}\!M,
\end{equation}
where $Z_n^V$ is the normalization constant, $V$ is a real polynomial of
even degree with positive leading coefficient, and $\mathrm{d}M$ denotes the Lebesgue 
measure on the algebraically independent entries of $M$. 
We call the ensembles in \eqref{eq:Gaussian} Gaussian-type unitary ensembles, which reduces to the GUE if $V(x)=x^2$. The other class of ensembles is the Laguerre-type unitary ensembles, which take the form
\begin{equation}\label{eq:Laguerre}
	\frac{1}{Z_n^Q} (\det M)^\alpha \exp({-n\operatorname{Tr}Q(M)})\operatorname{d}\!M, \qquad \alpha>-1,
\end{equation}
defined on the positive definite Hermitian matrices $M$ of size $n\times n$, where  $Z_n^Q$ is the corresponding normalization constant and $Q$ is a real polynomial with positive leading coefficient. If $Q(x)=x$, \eqref{eq:Laguerre} reduces to the LUE.

It is well-known, see e.g. \cite{Deift99,Mehta}, that the eigenvalue statistics of the ensembles \eqref{eq:Gaussian} and \eqref{eq:Laguerre} are determined by the correlation kernels built from polynomials orthogonal with respect to the corresponding weights. More precisely, the eigenvalue correlation kernel has the form
\begin{equation*}
		K_n(x, y) = \sqrt{w(x)w(y)} \sum_{k=0}^{n-1} p_k(x) p_k(y),
	\end{equation*}
where 
\begin{equation}\label{def:w}
    w(x)
    =\left\{
     \begin{array}{ll}
          w_V(x)=e^{-nV(x)}  , & \hbox{for the Gaussian-type unitary ensembles,} \\
         w_Q(x)=x^{\alpha}e^{-nQ(x)}\chi_{[0,+\infty)}(x), & \hbox{for the Laguerre-type unitary ensembles,}
        \end{array}
        \right.
\end{equation}
and 
$p_j(x) = \gamma_j x^j + \cdots$, $\gamma_j > 0$, $j=0,1,2,\ldots$, are polynomials of degree $j$ satisfying
\begin{equation*}
		\int_{\mathbb{R}} p_j(x) p_k(x) w(x) \operatorname{d}\!x = \delta_{jk}.
	\end{equation*}

While the limiting mean eigenvalue density 
\[
\lim_{n\to \infty}\frac{1}{n}K_{n}(x,x)
\]
is characterized by the minimizer of a weighted energy functional given in \eqref{def:energy} and \eqref{def:energy2} below (cf. \cite{Deift99,ST97}),
local limits of the correlation kernel at the spectrum edges are universal. Near the soft edge where
the limiting mean density typically vanishes like a square root, 
the limiting kernel is the Airy kernel
\begin{equation}\label{def:KAiry}
		K_{\Ai}(x,y):=\frac{\Ai(x)\Ai'(y)-\Ai'(x)\Ai(y)}{x-y}
	\end{equation}
with $\Ai$ being the Airy function; cf. \cite{BI,Deift99,DeiftGioev05,DKMVZ99,PJ1993,TWAiry}. For the 
Laguerre-type ensembles \eqref{eq:Laguerre}, the restriction on the positive definite matrices
leads to a hard edge at $0$ where the limiting mean density typically diverges like an inverse square root. Near the hard edge,
the limiting kernel is given by the Bessel kernel
\begin{equation}\label{def:LBes}
K_{\Bes}(x,y):=\frac{J_\alpha(\sqrt x)\sqrt y\,J_\alpha'(\sqrt y)-\sqrt x\,J_\alpha'(\sqrt x)J_\alpha(\sqrt y)}{2(x-y)}
\end{equation}
with $J_\alpha$ being the Bessel function of the first kind of order $\alpha$; 
cf. \cite{PJ1993,TracyWidom94Bessel,Vanlessen07}. Due to the determinantal structure of the eigenvalue distribution for the unitary invariant ensembles \eqref{eq:Gaussian} and \eqref{eq:Laguerre}, it then follows that the extreme eigenvalue distribution is given by the Airy-kernel or the Bessel-kernel determinants as $n\to \infty$. 
%, which is also known as the Tracy-Widom distribution.   

%For the classical Gaussian and Laguerre ensembles, this problem has been studied from several related perspectives. 
Finite-$n$ corrections to the correlation functions and densities were analyzed by Garoni, Forrester and Frankel 
for the GUE and LUE \cite{GaroniForresterFrankel2005}, and more recently by Byun and Lee in a different 
non-Hermitian setting \cite{ByunLee2024}. Corrections to distribution functions were initiated by Choup, 
who studied Edgeworth expansion of the largest eigenvalue distribution function for the GUE, 
LUE and  Gaussian orthogonal ensemble (GOE) \cite{Choup09,Choup08,Choup06}. Bornemann recently 
developed a general expansion theory for Gaussian and Laguerre ensembles at the soft edge, covering limit laws, 
level densities, and generating function \cite{Bornemann24b,Bornemann25b,Bornemann25c}. Regarding the expansion at 
the hard edge, Edelman, Guionnet and P\'ech\'e conjectured a particular $1/n$ correction term for the 
smallest eigenvalue distribution of LUE at the hard edge \cite{EdelmanGuionnetPeche2016}. Bornemann extracted this first finite-size correction from kernel asymptotics \cite{Bornemann16}, while Perret and Schehr independently proved the conjecture using a different approach \cite{PerretSchehr2016}; see also \cite{ForresterTrinh2019} for the extension to the Laguerre $\beta$ ensemble. Beyond this first correction term, no general full hard edge expansion appears to have been formulated in the literature. We also refer to  \cite{Bornemann24,YZ26} for the Edgeworth expansion of 
the hard-to-soft edge transition.

	% We shall use these kernel asymptotics to expand edge distribution functions, namely largest- and smallest-eigenvalue distributions. In particular, Bornemann conjectured in \cite{Bornemann24b} that the GUE soft-edge distribution admits a full asymptotic expansion to arbitrary order. The soft-edge theorem below proves the corresponding arbitrary-order kernel expansion for general varying-weight unitary ensembles and, after specialization to $V(x)=x^2$, proves the existence part of this conjecture. Hard-edge results are less developed.  Beyond this first correction term, no general full hard-edge expansion appears to have been formulated in the literature.  by Forrester and Trinh at the hard edge for the Laguerre $\beta$ ensemble \cite{ForresterTrinh2019},

	% In the determinantal setting considered here, edge gap probabilities, including the largest-eigenvalue distribution at the soft edge and the smallest-eigenvalue distribution at the hard edge, are Fredholm determinants of the corresponding restricted rescaled kernels. Once the correction kernels are identified with uniform remainder bounds, the associated edge distributions can be expanded by expanding these Fredholm determinants. Convergence-rate results in multivariate statistics give a further reason to keep finite-size terms explicit.
    Based on a operator norm convergence framework developed in \cite{TracyWidom05}, Johnstone and Ma established a convergence rate of $\mathcal{O}(n^{-2/3})$ for the largest eigenvalue distribution function to the Tracy-Widom limit in the GUE of any dimension and the GOE of even dimension \cite{JohnstoneMa2012}, whereas El Karoui \cite{ElKaroui2006} and Ma \cite{Ma2012} obtained a precise rate of convergence for the largest eigenvalue of complex white Wishart matrices. %\footnote{We use the infix notation $n\wedge p := \min(n,p)$.} for the LUE and LOE when $p/n \to \gamma \in (0,\infty)$. 
     More results on the rates of convergence for different random matrix models with different regimes can be found in \cite{Bourgade,Cai26,CaiTaljan25,SX23,SX22}. 
     
     % Recent related mesoscopic convergence-rate results for the GUE, LUE, JUE, and complex Wishart ensembles were obtained by Cai and Taljan~\cite{CaiTaljan25} and Cai~\cite{Cai26}. More details on these convergence-rate results can be found in \cite[\S3.2.2]{Taljan}.

     In view of the above results scattered in the literature, a natural question now is whether the universal limits and isolated first corrections can be replaced by full higher-order expansions in a general setting, both for the correlation kernels and for the distribution functions. Our results below give the existence of such expansions and reveal universal structures for the correction terms in the context of the Gaussian-type and Laguerre-type unitary ensembles.

	% We therefore ask whether the universal limits and isolated first corrections can be replaced by full higher-order edge expansions in a general varying-weight setting, both for the Airy and Bessel correlation kernels and for the associated Fredholm determinants. The results below give such expansions and keep the subleading terms explicit.

	% At the soft edge, after Airy scaling, we prove arbitrary-order kernel expansions in powers of $n^{-2/3}$, with correction terms given by finite linear combinations of $\Ai$ and $\Ai'$ with polynomial coefficients. After the polynomial factors are expanded, the correction operators have finite rank; this yields Fredholm determinant expansions for the largest-eigenvalue distribution. The specialization $V(x)=x^2$ proves the existence of the corresponding arbitrary-order soft-edge expansions for the GUE conjectured in \cite{Bornemann24b}. 

	% At the hard edge, after Bessel scaling, the correction terms are finite linear combinations of $J_\alpha(\sqrt{x})$ and $\sqrt{x}\,J_\alpha'(\sqrt{x})$ with coefficients polynomial in the scaled variables.

	\section{Statement of main results}
	\subsection{Edgeworth expansions for the Gaussian-type unitary  ensembles at the soft edge}
% We first consider the Unitary Invariant Ensembles (UIE) defined by the probability measure on the space of $n \times n$ Hermitian matrices $M$:
% 	 \begin{equation}
% 		     \frac{1}{Z_n} e^{-n \operatorname{Tr} V(M)} \operatorname{d}\!M,
% 		 \end{equation}
% 	   where $Z_n$ is the normalizing constant and $V(x)$ is assumed to be one-cut regular as follows. 
% 	 The induced joint probability density function for the eigenvalues $\lambda_1, \dots, \lambda_n$ is given by
% 	 \begin{equation}
% 		     \frac{1}{Z_n} \exp \left[ -n^2 \left( \frac{1}{n} \sum_{j=1}^n V(\lambda_j) - \frac{1}{n^2} \sum_{i \neq j} \log |\lambda_i - \lambda_j| \right) \right].
% 		 \end{equation}
As aforementioned, the empirical spectral measure of $M$ in the Gaussian-type unitary ensembles \eqref{eq:Gaussian} converges weakly to a probability measure $\mu_V $. It is the unique minimizer of the weighted energy functional
	 \begin{equation}\label{def:energy}
		    I_V[\mu] = \int_{\mathbb{R}} V(x) \operatorname{d}\!\mu(x) - \iint_{\mathbb{R}^2} \log |x - y| \operatorname{d}\!\mu(x) \operatorname{d}\!\mu(y),
		\end{equation}
among all the  probability measures over $\mathbb{R}$. We impose the following one-cut regularity assumptions on the potential $V$ \cite{DKM98,ST97}.
			
	\begin{assumption} \label{def:V}
		The potential $V$ in  \eqref{eq:Gaussian} satisfies the following conditions.
		\begin{itemize} 
			\item  The function $V$ is a real polynomial of even degree with positive leading coefficient.
			
			\item  The support of the equilibrium measure $\mu_V$ consists of a single bounded interval $[a, b]$.
			
			\item  The equilibrium measure is absolutely continuous with respect to Lebesgue measure, i.e., $\operatorname{d}\!\mu_V(x) = \psi_V(x) \operatorname{d}\!x$. The density $\psi_V(x)$ is strictly positive on $(a, b)$ and takes the form
			\begin{equation}\label{def:psifun}
				\psi_V(x)  = \sqrt{(b-x)(x-a)} \, h(x) \chi_{[a,b]}(x),
			\end{equation}
			where $h(x)$ is real analytic and strictly positive on $[a, b]$.
			
			\item  The Euler-Lagrange variational inequality holds strictly outside the support, i.e.,
			\begin{equation*}
				2 \int_{a}^{b} \log|x-s| \psi_V(s) \operatorname{d}\!s - V(x) < \ell_V, \quad \text{for } x \in \mathbb{R} \setminus [a, b],
			\end{equation*}
			where $\ell_V$ is a constant.
		\end{itemize}
	\end{assumption}

%\begin{remark}
		In the above assumption, the restriction to polynomial potentials is imposed to simplify the proof of the kernel estimates uniformly on scaled half-lines bounded from below. Within the class of real analytic confining potentials satisfying the one-cut regularity assumptions, the exterior estimate used below remains valid under, for instance, 
		\begin{equation*}
			\liminf_{x\to+\infty}\frac{V(x)}{x}>0.
		\end{equation*}
	%\end{remark}

Let $K_n^V$ be the correlation kernel for the eigenvalues of the Gaussian-type unitary ensembles. Our first result concerns the Edgeworth expansion of $K_n^V$ at the soft edge $b$. 
    
	\begin{theorem}\label{softthm}
		% Suppose that $V(x)$ is one-cut regular in the sense of Assumption~\ref{def:V}, and define
		% \begin{equation}\label{def:kappaV-soft}
		% 	\kappa_V:=\left(\pi\sqrt{b-a}\,h(b)\right)^{2/3}.
		% \end{equation}
    Under Assumption \ref{def:V} for the potential $V$, there exists a constant $\gamma=\gamma(V)>0$ such that, for each fixed $t_0\in \mathbb{R}$ and $m\in\mathbb N$, 
    %as $n\to \infty$ the rescaled kernel admits an expansion of the form
		\begin{align}\label{eq:softexpan}
			\widehat K_n(x,y)&:=
			\frac{1}{\kappa_V n^{2/3}}
			K_n^V \left(b+\frac{x}{\kappa_V n^{2/3}},
			b+\frac{y}{\kappa_V n^{2/3}}\right) \nonumber\\
			&= K_{\Ai}(x,y) + \sum_{j = 1}^{m} \widehat L_j(x,y)n^{-\frac{2j}{3}} + n^{-\frac{2(m+1)}{3}}\Boh(e^{-\gamma(x+y)}), \qquad n\to \infty, 
		\end{align}
		uniformly for $x,y\in [t_0,\infty)$, where 
        \begin{equation}
			\label{def:kappaV-soft}
			\kappa_V:=\left(\pi\sqrt{b-a}\,h(b)\right)^{2/3} 
		\end{equation}
		with $h$ given in \eqref{def:psifun} and $K_{\Ai}$ is the Airy kernel defined in \eqref{def:KAiry}. Moreover, each function $\widehat L_j(x,y)$ in \eqref{eq:softexpan} takes the form
		\begin{align}\label{eq:soft-correction-kernel-form}
			\widehat L_j(x,y) &=  \what a_{j,00}(x,y)\Ai(x)\Ai(y) + \what a_{j,01}(x,y)\Ai(x)\Ai'(y) \nonumber\\
			& \quad  +\what a_{j,10}(x,y)\Ai'(x)\Ai(y) +\what a_{j,11}(x,y)\Ai'(x)\Ai'(y),
		\end{align}
		where the coefficients $\what a_{j,\kappa \lambda }(x,y) \in \mathbb{R}[x,y]$  for $\kappa, \lambda \in \{0, 1\}$ and one has 
		\begin{align}\label{eq:L1-explicit}
			\widehat L_1(x,y)
			&= -\frac{d_b}{\kappa_V^2}(x^2+xy+y^2)\Ai(x)\Ai(y) \nonumber\\
			&\quad +\frac{\kappa_V+(b-a)d_b}{4(b-a)\kappa_V^2}
			\left(\Ai(x)\Ai'(y)+\Ai'(x)\Ai(y)\right)
			+\frac{d_b}{\kappa_V^2}(x+y)\Ai'(x)\Ai'(y)
		\end{align}
    with  
		\begin{equation}\label{def:db-soft}
			d_b:=\frac{\kappa_V}{5}\left(\frac{1}{b-a}+2\frac{h'(b)}{h(b)}\right).
		\end{equation}
	\end{theorem}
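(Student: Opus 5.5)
We will prove the expansion with the Deift--Zhou nonlinear steepest descent analysis of the Fokas--Its--Kitaev Riemann--Hilbert problem for the orthogonal polynomials with weight $w_V=e^{-nV}$. By the Christoffel--Darboux formula,
\[
K_n^V(x,y)=\frac{\sqrt{w_V(x)w_V(y)}}{2\pi\ii(x-y)}\bigl(Y^{-1}(y)Y(x)\bigr)_{21}.
\]
The standard chain of transformations applies: the $g$-function normalization $Y\mapsto T$ built from $\mu_V$, opening of lenses $T\mapsto S$, the global parametrix $N$ on $\mathbb C\setminus[a,b]$, and Airy parametrices $P^{(b)},P^{(a)}$ in small discs around $b$ and $a$. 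Near $b$ we use the conformal map
\[
f_b(z)=\Bigl(\tfrac32\phi_b(z)\Bigr)^{2/3}, \qquad f_b(z)=\kappa_V(z-b)\bigl(1+c_1(z-b)+\cdots\bigr),
\]
where $\phi_b$ is the usual $\int_b^z \psi$-type phase. The normalization $\kappa_V=(\pi\sqrt{b-a}\,h(b))^{2/3}$ is exactly what makes $n^{2/3}f_b(b+x/(\kappa_Vn^{2/3}))=x+\Boh(n^{-2/3}x^2)$. For $x,y$ in a compact neighbourhood of the edge scale we then have
\[
Y(z)=e^{n\ell_V\sigma_3/2}R(z)E_b(z)\Phi_{\Ai}\bigl(n^{2/3}f_b(z)\bigr)e^{-n\phi_b(z)\sigma_3}\cdots,
\]
where $E_b$ is analytic and $\Phi_{\Ai}$ is the Airy model matrix. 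Substituting, the exponential factors cancel against $\sqrt{w_V}$, and the kernel becomes
\[
\frac{1}{2\pi\ii(x-y)}\bigl(\Phi_{\Ai}^{-1}(\zeta_y)\,E_b^{-1}(y)R^{-1}(y)R(x)E_b(x)\,\Phi_{\Ai}(\zeta_x)\bigr)_{21}, \qquad \zeta_\cdot=n^{2/3}f_b(\cdot).
\]

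The key steps are as follows.

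(i) Since all jumps of $R$ are $I+\Boh(n^{-k})$-expandable, with the Airy asymptotic series producing jumps on the disc boundaries that expand in powers of $n^{-1}$, $R$ has a full asymptotic expansion $R=I+\sum_k R_k(z)n^{-k}$ uniformly near $b$. Each $R_k$ is computable by residues.

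(ii) The middle matrix $R^{-1}(y)R(x)$, together with $E_b^{-1}(y)E_b(x)$, is analytic in $(x,y)$ and equals $I$ on the diagonal. We Taylor expand it in $x-b$ and $y-b$, which are of size $n^{-2/3}$ times the scaled variables. Its difference from $I$ is divisible by $(x-y)$, which cancels the denominator.

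(iii) We expand $\Phi_{\Ai}(\zeta_x)$ around $\zeta=x$ using $\zeta_x=x+\sum_k p_k(x)n^{-2k/3}$ with polynomial $p_k$. Airy derivatives reduce via $\Ai''=x\Ai$ to combinations of $\Ai$ and $\Ai'$ with polynomial coefficients, and the divided difference $(F(\zeta_x,\zeta_y)-F(x,y))/(x-y)$ stays polynomial-times-Airy. Collecting powers of $n^{-2/3}$ (the $n^{-1}$ terms from $R$ combine with the $n^{1/3}$ contributed by $E_b$, whose prefactor is $n^{\sigma_3/6}$) yields the form \eqref{eq:soft-correction-kernel-form} with real polynomial coefficients.

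(iv) For the first correction we compute explicitly, using $c_1$ and hence $d_b$, which comes from $f_b''(b)$ expressed through $h'(b)/h(b)$ and $1/(b-a)$. The $\Ai\Ai'+\Ai'\Ai$ term arises from the first order of $E_b$ and $R_1$. Matching gives \eqref{eq:L1-explicit}.

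The main obstacle is uniformity on $[t_0,\infty)$ with the exponential weight $e^{-\gamma(x+y)}$. Steps (i)--(iv) give the expansion only for $x,y$ of size $o(n^{2/3})$, effectively $x,y\le \delta n^{2/3}$ inside the Airy disc. There the remainder carries polynomial growth times Airy decay $e^{-\frac23 x^{3/2}}$, which we bound by $e^{-\gamma(x+y)}$ after a uniform estimate on the Taylor remainders, accounting for the growth of polynomial factors in the $m$-th term.

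For the region outside the disc (scaled $x\ge\delta n^{2/3}$), we use the representation through $R$ and $N$ outside the lens. The kernel is bounded by $e^{-n(2\int\log|x-s|\,d\mu_V - V-\ell_V)}$, and the strict variational inequality together with the growth of $V$ (polynomial, so $V(x)/x\to\infty$) gives decay like $e^{-cn(x-b)}$. In scaled variables this is $e^{-c n^{1/3}x}$, which is $n^{-N}e^{-\gamma x}$. The explicit correction terms are likewise super-polynomially small there by Airy decay. Mixed cases (one variable inside the disc, one outside) are handled by a Cauchy--Schwarz bound $|K(x,y)|\le\sqrt{K(x,x)K(y,y)}$ for the kernel, together with direct bounds for the $\widehat L_j$. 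Choosing $\gamma$ smaller than the constants from both regimes completes the proof.
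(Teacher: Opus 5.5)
Your architecture matches the paper's: the RH representation inside the Airy disc, the prefactor $E_b^{-1}(y)R^{-1}(y)R(x)E_b(x)=I+(x-y)(\cdots)$, Taylor expansion of the Airy arguments, and a region decomposition. Your Cauchy--Schwarz bound $|\widehat K_n(x,y)|^2\le \widehat K_n(x,x)\widehat K_n(y,y)$ in the mixed regions is a valid and simpler substitute for the paper's case split.

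\textbf{The gap: step (iii).} It asserts without argument exactly what the paper identifies as the crux. Put $W(s,t)=\Ai(s)\Ai'(t)-\Ai'(s)\Ai(t)$. After expanding $\Ai(\zeta_x),\Ai'(\zeta_x)$ and reducing with $\Ai''=x\Ai$, the $n^{-2N/3}$ coefficient of $W(\zeta_x,\zeta_y)$ has the form $a_{00}\Ai(x)\Ai(y)+a_{01}(x,y)\Ai(x)\Ai'(y)-a_{01}(y,x)\Ai'(x)\Ai(y)+a_{11}\Ai'(x)\Ai'(y)$, with $a_{00},a_{11}$ antisymmetric by construction.

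\begin{itemize}
\item Vanishing on $x=y$ is therefore automatic. It carries no information about $\alpha(x):=a_{01}(x,x)$.
\item If $\alpha\not\equiv 0$, dividing by $x-y$ leaves $\bigl(\alpha(y)\Ai(x)\Ai'(y)-\alpha(x)\Ai'(x)\Ai(y)\bigr)/(x-y)$. For $\alpha\equiv1$ this is $K_{\Ai}$ itself, an infinite-rank kernel, not of the form \eqref{eq:soft-correction-kernel-form}.
\item Likewise, a termwise expansion of $K_{\Ai}(\zeta_x,\zeta_y)$ produces $\partial_x^i\partial_y^jK_{\Ai}$, none of which has that form.
\end{itemize}

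The missing ingredient is the Wronskian identity (Proposition~\ref{lemma}, specialized to \eqref{airy law}). The paper uses it to show $a_{N,01}(x,x)=0$ for all $N$. An equivalent formulation: write $(\Ai(\zeta_x),\Ai'(\zeta_x))^T=A_n(x)(\Ai(x),\Ai'(x))^T$ with $A_n(x)=\Psi(\zeta_x)\Psi(x)^{-1}$, where $\Psi$ is the Wronskian matrix of $(\Ai,\mathrm{Bi})$. The expansion coefficients of $A_n$ are polynomials in $x$, and $\det A_n\equiv 1$. Hence, with $\Omega=\begin{pmatrix}0&-1\\ 1&0\end{pmatrix}$, the matrix $A_n(y)^T\Omega A_n(x)-\Omega$ vanishes entrywise at $x=y$ at every order and is divisible by $x-y$. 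Without this you only get that each $\widehat L_j$ is entire, not the finite-rank structure that the theorem asserts and that Corollary~\ref{soft edge lifting} relies on.

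\textbf{Two smaller points.}

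First, the claim that everything collects into powers of $n^{-2/3}$ does not follow from the $n^{\sigma_3/6}$ conjugation alone, because that conjugation leaves diagonal entries unchanged. Let $C=\frac{1}{\sqrt2}\begin{pmatrix}1&\ii\\ \ii&1\end{pmatrix}$. A diagonal component of $CR_1C^{-1}$ would produce diagonal terms $n^{-1}\bigl(R_1(x_n)-R_1(y_n)\bigr)$ of order $n^{-5/3}$. You need the Kuijlaars--Tibboel parity structure \eqref{def:Rk}, inherited from the $\Delta_k$ through the recursive jumps:
\begin{itemize}
\item for odd $k$, $R_k\in\mathrm{span}\{\sigma_1,\sigma_3\}$, which becomes off-diagonal after conjugation by $C$;
\item for even $k$, $R_k\in\mathrm{span}\{I,\sigma_2\}$, which becomes diagonal after conjugation by $C$.
\end{itemize}

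Second, since $R_1(x_n)-R_1(y_n)=\Boh((x-y)n^{-2/3})$, $R_1$ enters the prefactor only at order $n^{-4/3}$. So in (iv) the $\Ai(x)\Ai'(y)+\Ai'(x)\Ai(y)$ term of $\widehat L_1$ comes solely from the scalar factor $\bigl((z-a)f_b(z)/(z-b)\bigr)^{\sigma_3/4}$ in $E_b$, not from $R_1$.
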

	
	%The correction operators in Theorem~\ref{softthm} have finite rank after the polynomial coefficients are expanded. 
	Let $\lambda_{\max}$ be the largest eigenvalue  of the Gaussian-type unitary ensembles. 
    Combining Theorem \ref{softthm} with the Fredholm determinant argument used in the proof
	of \cite[Theorem~2.1]{Bornemann24b} gives the following Edgeworth expansion for the distribution function of $\lambda_{\max}$.
	%Proposition~\ref{prop:finite-rank-det-template} records the resulting coefficient formulas.
	
	\begin{corollary}\label{soft edge lifting}
	Under Assumption \ref{def:V} for the potential $V$, we have, for each fixed $t_0\in\mathbb R$ and $m\in\mathbb N$,  
        %let $\lambda_{\max}$ denote the largest eigenvalue of the corresponding UIE with varying exponential weights, and define its cumulative distribution function by
		% \begin{equation}
		% 	\widehat F(n;x) := \mathbb{P}\left(\frac{\lambda_{\max}-b}{\kappa_V^{-1}n^{-2/3}} \leqslant x\right).
		% \end{equation}
		% Then, with the same $\gamma$ as in Theorem~\ref{softthm}, for each fixed $t_0\in\mathbb R$ and $m\in\mathbb N$,
		\begin{equation}\label{eq:soft-distribution-expansion}
			\widehat F(n; t) := \mathbb{P}\left(\frac{\lambda_{\max}-b}{\kappa_V^{-1}n^{-2/3}} \leqslant t\right) = F_{\Ai}(t) + \sum_{j=1}^{m} F_{\Ai,j}(t) n^{-\frac{2j}{3}} + n^{-\frac{2(m+1)}{3}}\Boh(e^{-2\gamma t}),
		\end{equation}
		uniformly for $t\in[t_0,\infty)$, where $\kappa_V$ is given in \eqref{def:kappaV-soft},
        \[
        F_{\Ai}(t):=\det(\mathbf I- \mathbf {K}_{\Ai})
        \]
        with $\mathbf {K}_{\Ai}$ being the integral operator acting on $L^2(t,\infty)$ associated with the Airy kernel \eqref{def:KAiry} is the cumulative function of Tracy-Widom distribution, $\gamma$ is the same constant as in \eqref{eq:softexpan} and $F_{\Ai,j}$ are smooth functions. In particular, one has 
        %The first coefficient is
		\begin{equation}\label{eq:soft-F1-explicit}
			F_{\Ai,1}(t)
			=
			\frac{d_b}{\kappa_V^2}t^2F_{\Ai}'(t)
			-\frac{\kappa_V+(b-a)d_b}{4(b-a)\kappa_V^2}F_{\Ai}''(t),
		\end{equation}
where $d_b$ is defined in \eqref{def:db-soft}. 
    \end{corollary}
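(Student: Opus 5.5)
The starting point is the gap-probability identity $\widehat F(n;t)=\det(\mathbf I-\widehat{\mathbf K}_n)_{L^2(t,\infty)}$, where $\widehat{\mathbf K}_n$ is the integral operator with kernel $\widehat K_n$ from \eqref{eq:softexpan}. This follows from the determinantal structure, since the event $\{(\lambda_{\max}-b)\kappa_V n^{2/3}\le t\}$ means there are no rescaled eigenvalues in $(t,\infty)$. I would then follow the argument of \cite[Theorem~2.1]{Bornemann24b}.

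First, conjugate by the multiplication operator $e^{\gamma x/2}$, or work directly in trace norm on $L^2(t,\infty)$. The uniform bound $\Boh(e^{-\gamma(x+y)})$ in Theorem~\ref{softthm} holds for $x,y\ge t_0$, and the $\widehat L_j$ are finite sums of products of Airy functions with polynomial coefficients. Together these make all operators in the expansion trace class, with trace norms controlled uniformly for $t\ge t_0$.

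The main obstacle is that a pointwise kernel bound does not directly give a trace-norm bound for the remainder. I would handle this in one of two ways. The first option is the Bornemann route: use the expansion also for derivatives, or use analyticity in the variables, and factor $\mathbf K=\mathbf A\mathbf B$ with Hilbert–Schmidt pieces. The second is simpler: since $\widehat K_n$ and the truncation are Hermitian-like kernels of the form $\int_t^\infty$ of products, I would bound the trace norm by the Hilbert–Schmidt norms of the two factors. For the remainder, it suffices to note that $e^{-\gamma(x+y)}$ on $(t,\infty)^2$ has Hilbert–Schmidt norm $\Boh(e^{-2\gamma t})$, and that Fredholm determinants are locally Lipschitz in trace norm. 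If needed, I would replace trace norm by the Hilbert–Schmidt-based $\det_2$ and track the trace term separately; that trace term is an integral of the diagonal, which is clearly $\Boh(e^{-2\gamma t})$.

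With this in hand, expand $\log\det(\mathbf I-\mathbf K_{\Ai}-\mathbf E)=\log F_{\Ai}-\sum_k \frac1k\operatorname{tr}\big(((\mathbf I-\mathbf K_{\Ai})^{-1}\mathbf E)^k\big)$. Here $\mathbf E=\sum_j \widehat{\mathbf L}_j n^{-2j/3}+\text{remainder}$, and $(\mathbf I-\mathbf K_{\Ai})^{-1}$ is bounded uniformly for $t\ge t_0$ because $\|\mathbf K_{\Ai}\|<1$ on $L^2(t_0,\infty)$. Collecting powers of $n^{-2/3}$ gives smooth coefficients $F_{\Ai,j}$, namely finite combinations of traces of products of finite-rank-type operators with the resolvent. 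The error is $\Boh(e^{-2\gamma t})$ because every term carries at least one factor whose norm decays like $e^{-2\gamma t}$, possibly after shrinking $\gamma$. Exponentiating gives \eqref{eq:soft-distribution-expansion}.

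For \eqref{eq:soft-F1-explicit}, the first-order term is $F_{\Ai,1}=-F_{\Ai}\operatorname{tr}((\mathbf I-\mathbf K_{\Ai})^{-1}\widehat{\mathbf L}_1)$. Instead of computing resolvent traces, I would recognize $\widehat L_1$ as the first-order variation of $K_{\Ai}$ under a change of variables. Using the Airy equation $\Ai''=x\Ai$, one checks two identities. First, the symmetric term $\Ai(x)\Ai'(y)+\Ai'(x)\Ai(y)=-(\partial_x+\partial_y)K_{\Ai}(x,y)$, which is translation in $t$. Second, the $d_b$-terms equal $(\partial_x x^2+\partial_y y^2)K_{\Ai}$ up to sign, which is the generator of the deformation $x\mapsto x+\epsilon x^2$. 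The corresponding determinant variations are then $F_{\Ai}''$-type and $t^2F_{\Ai}'$-type terms. Concretely, $\frac{d}{d\epsilon}\det(\mathbf I-\mathbf K_{\Ai})_{L^2(t+\epsilon,\infty)}$ yields the $F''$ term after differentiating once more, so the shift contributes $-cF_{\Ai}''$. The quadratic reparametrization contributes $t^2F_{\Ai}'$, with the Jacobian absorbed. Matching coefficients yields \eqref{eq:soft-F1-explicit}.
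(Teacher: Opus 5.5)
Your lifting step matches the paper's. The paper writes $\widehat F(n;t)=\det(\mathbf I-\widehat{\mathbf K}_n)$, uses the Fredholm-determinant argument of \cite{Bornemann24b} to get $\widehat F=F_{\Ai}D_{m,n}+n^{-2(m+1)/3}\Boh(e^{-2\gamma t})$, and expands $D_{m,n}$ with Proposition~\ref{prop:finite-rank-det-template}. Your $\det_2$-plus-diagonal-trace fallback is a valid way to turn the pointwise bound into that remainder, with the same $\gamma$. Hadamard's inequality with weights $e^{-\gamma x}$ in the Fredholm series would also work. The ``trace-norm Lipschitz plus Hilbert--Schmidt bound'' sentence on its own does not.

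For $F_{\Ai,1}$ your route is genuinely different. The paper rewrites $\widehat{\mathbf L}_1$ in the basis $\Ai^{(j)}\otimes\Ai^{(k)}$ and reads off $F_{\Ai}u_{jk}$ from the Shinault--Tracy table, where the $F_{\Ai}^{(3)},F_{\Ai}^{(5)}$ terms cancel. You read $\widehat L_1$ as an infinitesimal change of variables. That is more structural: it explains why only $t^2F_{\Ai}'$ and $F_{\Ai}''$ appear, since the $d_b$-part is the reparametrization $x\mapsto x+\frac{d_b}{\kappa_V^2}x^2n^{-2/3}$ of \eqref{eq:first-order-airy-argument}, and it avoids the table. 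It does not give the higher $F_{\Ai,j}$.

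As written, however, both of your identities are wrong, and matching coefficients with them would give the wrong answer.

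First, $(\partial_x+\partial_y)K_{\Ai}(x,y)=-\Ai(x)\Ai(y)$; differentiate $\int_0^\infty\Ai(x+s)\Ai(y+s)\,\ud s$ to see this. Hence $\Ai\otimes\Ai'+\Ai'\otimes\Ai=-(\partial_x+\partial_y)^2K_{\Ai}$, which is a \emph{second}-order translation term. Your first-order identity would produce $F_{\Ai}'$, not $F_{\Ai}''$. You therefore need the second variation. For $K_\epsilon(x,y)=K_{\Ai}(x+\epsilon,y+\epsilon)$ and $R=(\mathbf I-\mathbf K_{\Ai})^{-1}$,
\[
\frac{F_{\Ai}''}{F_{\Ai}}=-\tr(R\ddot K)+\bigl(\tr R\dot K\bigr)^2-\tr(R\dot KR\dot K).
\]
The last two terms cancel only because $\dot K=-\Ai\otimes\Ai$ has rank one. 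This gives $F_{\Ai}''=2F_{\Ai}u_{01}$ and hence the contribution $-c_bF_{\Ai}''$. Equivalently, use $F_{\Ai}'=F_{\Ai}u_{00}$ together with $u_{00}'=2u_{01}-u_{00}^2$.

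Second, the $d_b$-part equals $+\frac{d_b}{\kappa_V^2}(x^2\partial_x+y^2\partial_y+x+y)K_{\Ai}$. This is the $\epsilon$-derivative at $0$ of the half-density kernel $\sqrt{\phi_\epsilon'(x)\phi_\epsilon'(y)}\,K_{\Ai}(\phi_\epsilon(x),\phi_\epsilon(y))$ with $\phi_\epsilon(x)=x+\epsilon x^2$. It is not $(\partial_x x^2+\partial_y y^2)K_{\Ai}$: that expression contains an extra $(x+y)K_{\Ai}$, which adds a trace term that is not identically zero. Also, $\phi_\epsilon$ maps $(t,\infty)$ monotonically onto $(\phi_\epsilon(t),\infty)$ only for small $\epsilon\ge 0$. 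So use the unitary $f\mapsto\sqrt{\phi_\epsilon'}\,f\circ\phi_\epsilon$ and a one-sided derivative. This gives $\det(\mathbf I-\mathbf K^{(\epsilon)})_{L^2(t,\infty)}=F_{\Ai}(t+\epsilon t^2)$, hence the contribution $\frac{d_b}{\kappa_V^2}t^2F_{\Ai}'$.

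With these two corrections your argument reproduces \eqref{eq:soft-F1-explicit}.
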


	%\begin{remark}\label{rem:soft-gue}
For the GUE, i.e., $V(x)=x^2$ in \eqref{eq:Gaussian}, one has $a=-\sqrt2$, $b=\sqrt2$, $\kappa_V=\sqrt2$, $d_b=1/10$, 
which, by \eqref{eq:L1-explicit} and \eqref{eq:soft-F1-explicit}, leads to 
\begin{align*}
		\widehat L_1^{\mathrm{GUE}}(x,y)
		&=-\frac1{20}(x^2+xy+y^2)\Ai(x)\Ai(y)\\
		&\quad+\frac3{40}\left[\Ai(x)\Ai'(y)+\Ai'(x)\Ai(y)\right]
		+\frac1{20}(x+y)\Ai'(x)\Ai'(y)
	\end{align*}
    and
    \[
		F_{\Ai,1}^{\mathrm{GUE}}(t)=\frac{t^2}{20}F_{\Ai}'(t)-\frac{3}{40}F_{\Ai}''(t).
	\]
This agrees with the results in \cite{Bornemann24b} by a straightforward calculation\footnote{In \cite{Bornemann24b} the expansion is taken with respect to $n^{-2/3}/4$.}. In addition, it was conjectured therein that the GUE admits a full asymptotic expansion to arbitrary order at the soft edge, our Theorem~\ref{softthm} and Corollary~\ref{soft edge lifting} actually resolve this conjecture in a more general setting.

	It is also worth pointing out that a closely related problem is the soft edge expansion of the level density, recently studied for the classical ensembles by Bornemann~\cite{Bornemann25b} and independently by Forrester, Rahman, and Shen~\cite{ForresterRahmanShen26}. Here the density expansion follows by restricting the correlation kernel to the diagonal. Thus Theorem~\ref{softthm} gives soft edge density expansions for the Gaussian-type unitary ensembles and recovers the results for the GUE by taking $V(x)=x^2$.

	%We turn to the hard edge. 
	
	\subsection{Edgeworth expansions for the Laguerre-type unitary  ensembles at the 
	hard edge}
	For the Laguerre-type unitary ensembles \eqref{eq:Laguerre}, 
	the empirical spectral measure of $M$ converges weakly to a 
	probability measure $\mu_{Q}$, which minimizes the energy functional 
    \begin{equation}\label{def:energy2}
		I_Q[\mu]=\int_0^\infty Q(x)\,\ud\mu(x)-\iint_{[0,\infty)^2}\log|x-y|\,\ud\mu(x)\,\ud\mu(y),
	\end{equation}
    among all the probability measures over the positive real axis. Similar to Assumption \ref{def:V}, We impose the following one-cut regularity assumptions on the potential $Q$.

	% on $(0,\infty)$ with weight
	% \[
	% 	w(x)=x^\alpha e^{-nQ(x)}, \qquad \alpha>-1,
	% \]
	% it is well known that, as $n \to \infty$, the empirical spectral measure converges weakly to the unique equilibrium measure $\mu_Q \in \mathcal{M}_1([0,\infty))$, where $\mathcal{M}_1([0,\infty))$ denotes the space of Borel probability measures on $[0,\infty)$. This measure is characterized as the unique minimizer of the weighted energy functional:
	
	% We impose the following one-cut regularity assumptions on the potential $Q$ at the origin.
	\begin{assumption}\label{def:Q}
	The potential $Q$ in \eqref{eq:Laguerre} satisfies the following conditions. 
		\begin{itemize}
			\item The function $Q$ is a real polynomial with positive leading coefficient.
			\item The support of the equilibrium measure $\mu_Q$ consists of a single bounded interval $[0,\beta]$.
			\item The equilibrium measure is absolutely continuous with respect to the Lebesgue measure, i.e., $\ud\mu_Q(x)=\psi_Q(x)\,\ud x$. Moreover, 
			\begin{equation}\label{def:Q-density}
				\psi_Q(x)=\frac{1}{2\pi}\sqrt{\frac{\beta-x}{x}}\,h_Q(x)\chi_{[0,\beta]}(x),
			\end{equation}
			where $h_Q$ is real analytic and strictly positive on $[0,\beta]$.
			% Equivalently,
			% \begin{equation}\label{def:hQ}
			% 	h_Q(x)=2\pi\sqrt{\frac{x}{\beta-x}}\,\psi_Q(x), \qquad x\in (0,\beta),
			% \end{equation}
			% and in particular
			% \begin{equation}
			% 	\psi_Q(x)=\frac{h_Q(0)\sqrt\beta}{2\pi\sqrt x}\bigl(1+\mathcal{O}(x)\bigr), \quad x\searrow 0,
			% 	\qquad
			% 	\psi_Q(x)=\frac{h_Q(\beta)}{2\pi\sqrt\beta}\sqrt{\beta-x}\bigl(1+\mathcal{O}(x-\beta)\bigr), \quad x\nearrow \beta.
			% \end{equation}
			\item The Euler--Lagrange variational inequality holds strictly outside the support, i.e., 
			\begin{equation}\label{eq:Q-EL}
				2\int_0^\beta \log|x-y|\,\ud\mu_Q(y)-Q(x)<\ell_Q, \qquad x\in (\beta,+\infty),
			\end{equation}
			where $\ell_Q$ is a constant.
		\end{itemize}
	\end{assumption}

Let $K_n^Q$ be the correlation kernel for the eigenvalues of the Laguerre-type unitary 
ensembles. Although it is still possible to establish the expansion of $K_n^Q$ at the soft 
edge as in Theorem \ref{softthm}, the emphasis here is put on the expansion at the hard edge 0, 
which exhibits significantly different behaviors. 
    
	\begin{theorem}\label{hardthm}
		Under Assumption \ref{def:Q} for the potential $Q$, we have for fixed $\alpha>-1$ and
		each $m\in \mathbb{N}$,
		  \begin{align}\label{expansion-hard}
			\widetilde K_n(u,v)&:=
			\frac{1}{\kappa_Q n^2}
			K_n^Q\left(\frac{u}{\kappa_Q n^2},\frac{v}{\kappa_Q n^2}\right) \nonumber\\
			&=K_{\Bes}(u,v)+\sum_{j=1}^{m}\widetilde L_j(u,v)n^{-j}+\mathcal{O}(n^{-m-1}), 
			\qquad n\to \infty,
		\end{align}
		uniformly for $u,v\in (0,s]$ with $s>0$ fixed, where
		\begin{equation}\label{def:kappaQ-hard}
			\kappa_Q:=\beta h_Q(0)^2
		\end{equation}
		with $h_Q$ given in \eqref{def:Q-density}, $K_{\Bes}$ is the Bessel kernel defined 
		in \eqref{def:LBes}, the function $\widetilde L_j(u,v)$ 
		takes the form
		\begin{align}\label{eq:hard-correction-kernel-form}
			\widetilde L_j(u,v)&=\widetilde a_{j,00}(u,v)J_\alpha(\sqrt u)J_\alpha(\sqrt v)+\widetilde a_{j,01}(u,v)J_\alpha(\sqrt u)\sqrt v\,J_\alpha'(\sqrt v)\nonumber\\
			&\quad +\widetilde a_{j,10}(u,v)\sqrt u\,J_\alpha'(\sqrt u)J_\alpha(\sqrt v)+\widetilde a_{j,11}(u,v)\sqrt u\,J_\alpha'(\sqrt u)\sqrt v\,J_\alpha'(\sqrt v),
		\end{align}
		with $\widetilde a_{j,\kappa\lambda}(u,v)\in \mathbb{R}[u,v]$ for $\kappa,\lambda\in\{0,1\}$ and 
        \begin{equation}\label{eq:hard-L1-explicit}
			\widetilde L_1(u,v)=\frac{\alpha}{2\beta h_Q(0)}J_\alpha(\sqrt u)J_\alpha(\sqrt v).
		\end{equation} 
        Moreover, one has the following weighted uniform estimate for $u,v\in [0,s]$:
		\begin{equation}\label{expansion-hard-weighted}
			(u v)^{-\alpha/2}\left(\widetilde K_n(u,v)-K_{\Bes}(u,v)-\sum_{j=1}^{m}\widetilde L_j(u,v)n^{-j}\right)=\mathcal{O}(n^{-m-1}).
		\end{equation}
		%The first correction term is 
		
	\end{theorem}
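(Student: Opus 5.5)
The proof goes through the Riemann--Hilbert (RH) characterization of the orthogonal polynomials with weight $w_Q(x)=x^\alpha e^{-nQ(x)}$ on $[0,\infty)$, followed by the Deift--Zhou steepest descent analysis as in \cite{Vanlessen07}. The main new work is to keep the full asymptotic series of the small-norm problem near the hard edge rather than only its leading term.

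\textbf{Step 1: standard transformations and the Christoffel--Darboux formula.} Let $Y$ be the $2\times 2$ RH problem for the orthogonal polynomials. We apply the usual chain $Y\mapsto T\mapsto S\mapsto R$:
\begin{itemize}
\item normalize with the $g$-function of $\mu_Q$;
\item open lenses on $(0,\beta)$;
\item build the global parametrix $N$ on $[0,\beta]$;
\item build an Airy parametrix at $\beta$;
\item build a Bessel parametrix $P^{(0)}$ at $0$ in a fixed disk $U_0$.
\end{itemize}
The Bessel parametrix has the form $P^{(0)}(z)=E_n(z)\Psi_{\Bes}(n^2f(z))\,z^{-\alpha\sigma_3/2}e^{-n\phi(z)\sigma_3}$. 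Here $f(z)$ is a conformal map with $f(z)=\kappa_Q z(1+\mathcal O(z))$, where $\kappa_Q=\beta h_Q(0)^2$ should come out of the local behaviour of $\psi_Q$ at $0$. The prefactor $E_n$ is analytic in $U_0$.

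Inside the lens near $0$, the Christoffel--Darboux formula gives
$$K_n^Q(x,y)=\frac{\sqrt{w_Q(x)w_Q(y)}}{2\pi \ii (x-y)}\begin{pmatrix}0&1\end{pmatrix}S_+(y)^{-1}S_+(x)\begin{pmatrix}1\\0\end{pmatrix}.$$
Substituting $S=RP^{(0)}$, we obtain
$$K_n^Q=\frac{1}{2\pi\ii(x-y)}\begin{pmatrix}0&1\end{pmatrix}\Psi_{\Bes,+}^{-1}(n^2f(y))\,E_n^{-1}(y)R^{-1}(y)R(x)E_n(x)\,\Psi_{\Bes,+}(n^2f(x))\begin{pmatrix}1\\0\end{pmatrix}.$$
The entries of $\Psi_{\Bes}$ are built from $J_\alpha(\sqrt{\zeta})$ and $\sqrt{\zeta}J_\alpha'(\sqrt{\zeta})$, up to powers of $\zeta^{\alpha/2}$ that cancel against the weight.

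\textbf{Step 2: expand each ingredient at the scale $x=u/(\kappa_Q n^2)$.}
\begin{itemize}
\item \emph{Small-norm factor.} The jump of $R$ on $\partial U_0$ has a full expansion in powers of $n^{-1}$, coming from the large-$\zeta$ expansion of $\Psi_{\Bes}$. The Airy disk at $\beta$ gives an expansion in $n^{-1}$ as well. Hence $R(z)=I+\sum_{k\ge1}R_k(z)n^{-k}$, uniformly with analytic coefficients. Since $x,y=\mathcal O(n^{-2})$, Taylor expansion gives $R^{-1}(y)R(x)=I+\frac{x-y}{\cdot}\times(\text{analytic})$. Its difference from $I$ is therefore $(x-y)$ times a double series in $n^{-1}$ and in $u,v$ scaled by $n^{-2}$, which cancels the factor $1/(x-y)$.
\item \emph{Prefactor.} $E_n(x)=E(x)\,n^{\sigma_3/2}(\ldots)$, and $E_n^{-1}(y)E_n(x)$ expands similarly, conjugated by $n^{\pm\sigma_3/2}$.
\item \emph{Conformal map.} In $\zeta=n^2f(x)=u(1+c_1u/n^2+\dots)$, only even powers of $n^{-1}$ enter through $f$. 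Taylor expanding the Bessel functions in $\zeta-u$ produces higher derivatives of $J_\alpha$. These are reduced, via Bessel's equation, to $J_\alpha(\sqrt u)$ and $\sqrt u\,J_\alpha'(\sqrt u)$ with polynomial coefficients.
\end{itemize}
Collecting terms yields the form \eqref{eq:hard-correction-kernel-form}. One must check that each coefficient of $n^{-j}$ is a finite sum, which holds because every power of $u,v$ costs a factor $n^{-2}$. The odd powers of $n^{-1}$ come from $R_1$ and the $n^{\pm\sigma_3/2}$ conjugation.

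\textbf{Step 3: compute $\widetilde L_1$.} We read off $\widetilde L_1$ from $R_1(0)$ combined with $E(0)$. By standard residue computations, the $n^{-1}$ term of the jump is $\frac{4\alpha^2-1}{8}$-type. Together with $N$ near $0$, the expectation is that only the $(J_\alpha J_\alpha)$ rank-one term survives, with coefficient $\alpha/(2\beta h_Q(0))$. This should be verified by explicit computation, which is tedious but routine.

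\textbf{Step 4: error bounds.} The remainder estimates, including the weighted one \eqref{expansion-hard-weighted}, follow because the error matrix in $R$ is $\mathcal O(n^{-m-1})$ uniformly with its derivative. Writing $J_\alpha(\sqrt u)=u^{\alpha/2}\times(\text{entire})$ shows that all terms carry the factor $(uv)^{\alpha/2}$, and the divided difference in $x-y$ is controlled by Cauchy estimates.

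\textbf{Main obstacle.} I expect the hardest step to be the bookkeeping in Step 2: organizing the combined expansion in $n^{-1}$ and in $u,v$ scaled by $n^{-2}$ so that truncation at order $m$ is finite and uniform on $(0,s]$. I would first handle this by working with the entire functions $u^{-\alpha/2}J_\alpha(\sqrt u)$ and Taylor bounds.
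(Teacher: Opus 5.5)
Your framework matches the paper's: steepest descent with a Bessel parametrix at $0$, then expanding the small-norm factor, the analytic prefactor and the Bessel arguments at the scale $u/(\kappa_Qn^2)$. However, there is a genuine gap, because Step~2 skips the one point that actually produces the polynomial form \eqref{eq:hard-correction-kernel-form}.

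\textbf{The missing divisibility.} Your $(u-v)$-cancellation only handles the part of $\widetilde E_n(v_n)^{-1}\widetilde R(v_n)^{-1}\widetilde R(u_n)\widetilde E_n(u_n)$ that differs from $I$. The identity part contributes exactly
\[
\frac{\lambda_n(u)-\lambda_n(v)}{u-v}\,K_{\Bes}\bigl(\lambda_n(u),\lambda_n(v)\bigr),\qquad \lambda_n(x)=-4n^2\widetilde f\bigl(x/(\kappa_Qn^2)\bigr)=x+\mathcal{O}(n^{-2}).
\]
After Taylor expansion and reduction by Bessel's equation, its $n^{-j}$ coefficient is $(u-v)^{-1}\sum_{\kappa,\lambda}b_{j,\kappa\lambda}(u,v)\phi_\kappa(u)\phi_\lambda(v)$, with $\phi_0(x)=J_\alpha(\sqrt x)$ and $\phi_1(x)=\sqrt x\,J_\alpha'(\sqrt x)$. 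To get $\widetilde a_{j,\kappa\lambda}\in\mathbb R[u,v]$ you must show that each of the four polynomials $b_{j,\kappa\lambda}$ vanishes on $u=v$.

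``Collecting terms'' does not show this. Regularity of the kernel on the diagonal does not either: it only kills the quadratic form $\sum b_{j,\kappa\lambda}(u,u)\phi_\kappa(u)\phi_\lambda(u)$. A leftover $b_{j,01}(u,u)=-b_{j,10}(u,u)\neq0$ would leave a term proportional to $b_{j,01}(u,u)K_{\Bes}(u,v)$, which is not of finite-rank form. Already at order $n^{-2}$, the multiples of $K_{\Bes}$ in $(u^2\partial_u+v^2\partial_v+u+v)K_{\Bes}$ cancel only because of the Jacobian term $u+v$.

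This divisibility is the heart of the paper's proof, and it is the real obstacle, not the truncation bookkeeping. The paper applies the Wronskian conservation law (Proposition~\ref{lemma}) to $y(x)=J_\alpha(\sqrt x)$, with $p=-1/x$, giving $(x+w)(P\partial_wQ-Q\partial_wP)=-x$. This yields a polynomial transfer matrix $A_n(x)=I+\sum_j A^{(j)}(x)n^{-j}$ with $\Phi(\lambda_n(x))=A_n(x)\Phi(x)$, where $\Phi=(\phi_0,\phi_1)^T$, and $\det A_n\equiv1$. Hence $A_n(v)^T\Omega A_n(u)-\Omega$, with $\Omega=\begin{pmatrix}0&-1\\1&0\end{pmatrix}$, vanishes at $u=v$, so all its coefficients are divisible by $u-v$. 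Equivalently, the same $A_n$ transports every solution of the ODE, and Abel's formula (Wronskian $\propto 1/x$) forces $\det A_n=1$. Your plan has to include this step.

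\textbf{Step 3 puts $\widetilde L_1$ in the wrong place.} In $\widetilde R(v_n)^{-1}\widetilde R(u_n)$ the value $R_1(0)$ cancels. Cauchy's estimate gives $\widetilde R(v_n)^{-1}\widetilde R(u_n)-I=\mathcal{O}((u-v)n^{-3})$, which is still $\mathcal{O}((u-v)n^{-2})$ after conjugation by $\widetilde E_n^{\pm1}=\mathcal{O}(n^{1/2})$. Since also $\lambda_n(x)=x+\mathcal{O}(n^{-2})$, neither $R_1$ nor the $(4\alpha^2-1)$-type jump coefficients touch the $n^{-1}$ term.

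The first correction comes entirely from $\widetilde E_n(v_n)^{-1}\widetilde E_n(u_n)$, through the Szeg\H{o} factor $D(z)^{-\sigma_3}$ of $\widetilde N$, at $x_n=x/(\kappa_Qn^2)$:
\[
\bigl(-\varphi^*_\pm(x_n)\bigr)^{\alpha/2}=1\mp\frac{\alpha\ii\sqrt x}{\beta h_Q(0)}n^{-1}+\mathcal{O}(n^{-2}).
\]
Combined with the $n^{\pm\sigma_3/2}$ and $x^{\pm1/4}$ factors, this gives $I+\frac{\alpha\pi\ii}{\beta h_Q(0)}(u-v)\begin{pmatrix}0&0\\1&0\end{pmatrix}n^{-1}$, and hence \eqref{eq:hard-L1-explicit}.

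Your Step~4 is a sound route to \eqref{expansion-hard-weighted}: use the entire functions $u^{-\alpha/2}J_\alpha(\sqrt u)$ and Cauchy estimates for the divided difference.
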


    Let $\lambda_{\min}$ be the smallest eigenvalue of the Laguerre-type unitary ensembles. By lifting the kernel expansion to the associated 
    Fredholm determinant, one has the following Edgeworth expansion for 
    the distribution of $\lambda_{\min}$. 
    
	% At the hard edge, the proof of \cite[Theorem~2.1]{Bornemann24} converts the kernel expansion into a Fredholm determinant expansion once the correction operator is written in finite-rank form. Proposition~\ref{prop:finite-rank-det-template} records the resulting coefficient formulas.
	
	\begin{corollary}\label{hard edge lifting}
		% Under the assumptions of Theorem~\ref{hardthm}, let $\lambda_{\min}$ denote the smallest eigenvalue in the corresponding Laguerre-type ensemble, and define its rescaled hard-edge gap probability by
    Under Assumption \ref{def:Q} for the potential $Q$, we have 
		% \begin{equation}
		% 	\widetilde F(n;t):=\mathbb{P}\left(\lambda_{\min}>\frac{t}{\kappa_Q n^2}\right).
		% \end{equation}
		% Then, 
    uniformly for $t\in[0,s]$ and each $m\in\mathbb{N}$,
		\begin{equation}\label{eq:hard-distribution-expansion}
			\widetilde F(n; t) :=\mathbb{P}\left(\lambda_{\min}>\frac{t}{\kappa_Q n^2}\right) = F_{\Bes}(t) + \sum_{j=1}^{m} F_{\Bes,j}(t) n^{-j} + \mathcal{O}(n^{-m-1}),
		\end{equation}
		where 
         \[
        F_{\Bes}(t):=\det(\mathbf I- \mathbf {K}_{\Bes})
        \]
        with $\mathbf {K}_{\Bes}$ being the integral operator acting on $L^2(0,t)$ associated with the Bessel kernel \eqref{def:LBes}
        and $F_{\Bes,j}$ are smooth functions on $(0,\infty)$ with
		\begin{equation}\label{eq:hard-d1-explicit}
			F_{\Bes,1}(t)
			=
			\frac{2\alpha t}{\beta h_Q(0)}F_{\Bes}'(t).
		\end{equation}
	\end{corollary}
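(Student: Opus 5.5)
The distribution function is a Fredholm determinant on a fixed bounded interval, so the plan is to lift the kernel expansion of Theorem~\ref{hardthm} through the determinant. By the determinantal structure,
\[
\widetilde F(n;t)=\det(\mathbf I-\widetilde{\mathbf K}_n)_{L^2(0,t)},
\]
where $\widetilde{\mathbf K}_n$ is the integral operator with kernel $\widetilde K_n(u,v)$ on $(0,t)$, $t\in[0,s]$. This follows from the change of variables $x=u/(\kappa_Q n^2)$ in the gap probability $\det(\mathbf I-\mathbf K_n^Q)_{L^2(0,t/(\kappa_Q n^2))}$.

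The first step is to pass from the uniform kernel estimate to an estimate in trace norm. Because $(0,t)$ is bounded and $\alpha>-1$, the weighted estimate \eqref{expansion-hard-weighted} shows that the remainder kernel $R_{n,m}(u,v)$ satisfies $|R_{n,m}(u,v)|\le C n^{-m-1}(uv)^{\alpha/2}$. This gives a Hilbert--Schmidt bound of order $n^{-m-1}$, since $\int_0^s u^{\alpha}\,\ud u<\infty$.

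A trace-norm bound needs more work. I would use the representation of the kernels as integrals of rank-one pieces: $K_{\Bes}(u,v)=\frac14\int_0^1 J_\alpha(\sqrt{ru})J_\alpha(\sqrt{rv})\,\ud r$. The same holds for $\widetilde K_n$ up to the Christoffel--Darboux/integrability structure coming from the Riemann--Hilbert analysis. Alternatively, I would fall back on the simpler route used in \cite{Bornemann24b}. There, the determinant is treated as an analytic function of the kernel through the Fredholm series
\[
\det(\mathbf I-\mathbf K)=\sum_{k\ge0}\frac{(-1)^k}{k!}\int_{(0,t)^k}\det[K(u_i,u_j)]\,\ud u.
\]
By Hadamard's inequality, kernels bounded by $C(u_iu_j)^{\alpha/2}$ give an absolutely convergent series. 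It is also Lipschitz in the kernel with respect to the weighted sup norm $\sup (uv)^{-\alpha/2}|K(u,v)|$, with constants uniform in $t\in[0,s]$. I regard this weighted-norm continuity as the main technical point, together with the fact that the Bessel kernel itself obeys the same weighted bound $|K_{\Bes}(u,v)|\le C(uv)^{\alpha/2}$ near $0$.

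The second step is to expand. Insert $\widetilde K_n=K_{\Bes}+\sum_{j=1}^m \widetilde L_j n^{-j}+R_{n,m}$ into the Fredholm series and expand each $k\times k$ determinant multilinearly in the rows. Collecting the powers $n^{-j}$, $j\le m$, gives coefficients $F_{\Bes,j}(t)$, each a finite sum of integrals of products of $K_{\Bes}$ and the $\widetilde L_i$ over $(0,t)^k$. The remaining terms are $\mathcal{O}(n^{-m-1})$ uniformly on $[0,s]$ by the Hadamard bounds above, since each $\widetilde L_j$ is also $\mathcal{O}((uv)^{\alpha/2})$ because $J_\alpha(\sqrt u)\sim c\,u^{\alpha/2}$. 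These coefficients are smooth in $t$ on $(0,\infty)$ because the integrands are analytic in the endpoint variable. Equivalently, since each $\widetilde L_j$ has finite rank after the polynomial coefficients are expanded, one can write $F_{\Bes,j}$ via $\det(\mathbf I-\mathbf K_{\Bes})$ times traces of resolvent expressions.

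The third step identifies $F_{\Bes,1}$. To first order,
\[
F_{\Bes,1}(t)=-F_{\Bes}(t)\,\tr\bigl((\mathbf I-\mathbf K_{\Bes})^{-1}\widetilde{\mathbf L}_1\bigr),
\]
where by \eqref{eq:hard-L1-explicit} the operator $\widetilde{\mathbf L}_1$ is rank one with kernel $\frac{\alpha}{2\beta h_Q(0)}J_\alpha(\sqrt u)J_\alpha(\sqrt v)$. On the other hand, the scaling $K_{\Bes}$ on $(0,t)$ equals $tK_{\Bes}(tx,ty)$ on $(0,1)$, and the integral representation gives $\partial_t\bigl[tK_{\Bes}(tx,ty)\bigr]=\frac14 J_\alpha(\sqrt{tx})J_\alpha(\sqrt{ty})$. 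Hence
\[
F_{\Bes}'(t)=-\frac{1}{4t}F_{\Bes}(t)\,\tr\bigl((\mathbf I-\mathbf K_{\Bes})^{-1}\phi\otimes\phi\bigr)\quad\text{on }L^2(0,t),\qquad \phi(u)=J_\alpha(\sqrt u).
\]
Comparing the two expressions yields $F_{\Bes,1}(t)=\frac{4t\alpha}{2\beta h_Q(0)}F_{\Bes}'(t)=\frac{2\alpha t}{\beta h_Q(0)}F_{\Bes}'(t)$, which is \eqref{eq:hard-d1-explicit}.
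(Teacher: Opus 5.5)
Your proposal is correct and follows essentially the paper's route. Your Lipschitz continuity of the Fredholm series in the weighted norm $\sup(uv)^{-\alpha/2}|K(u,v)|$ is the same device as the paper's unitary conjugation to $L^2((0,t),x^\alpha\,\mathrm dx)$ combined with the finite-measure Lipschitz bound, and your first coefficient comes from the same trace formula $F_{\Bes,1}=-F_{\Bes}\tr\bigl((\mathbf I-\mathbf K_{\Bes})^{-1}\widetilde{\mathbf L}_1\bigr)$. The differences are minor: you extract coefficients by multilinear expansion of the Fredholm series rather than via Proposition~\ref{prop:finite-rank-det-template}, and you derive the identity $\frac{\mathrm d}{\mathrm dt}\log F_{\Bes}(t)=-\frac{1}{4t}\langle(\mathbf I-\mathbf K_{\Bes})^{-1}\phi,\phi\rangle_{L^2(0,t)}$ from $K_{\Bes}(x,y)=\frac14\int_0^1 J_\alpha(\sqrt{rx})J_\alpha(\sqrt{ry})\,\mathrm dr$ and scaling, whereas the paper cites it from Bornemann and Tracy--Widom.
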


For the LUE, i.e., $Q(x)=x$ in \eqref{eq:Laguerre}, one has $\beta=4$ and $h_Q(0)=1$, which, by \eqref{eq:hard-L1-explicit} and \eqref{eq:hard-d1-explicit}, leads to
	\[
		\widetilde L_1^{\mathrm{LUE}}(u,v)=\frac{\alpha}{8}J_\alpha(\sqrt u)J_\alpha(\sqrt v)
	\qquad
    \mathrm{and}
    \qquad
		F_{\Bes,1}^{\mathrm{LUE}}(t)=\frac{\alpha t}{2}F_{\Bes}'(t).
	\]
This agrees with the result in \cite{Bornemann16} and the result in \cite{ForresterRahmanShen26} by re-expanding the scaling variable used therein. 

Our results also imply an optimally modified hard edge scaling for the Laguerre-type unitary ensembles, which was first observed in \cite{Bornemann16} for the LUE. More precisely, set
		\[
			c_Q:=\frac{2\alpha}{\beta h_Q(0)},\qquad
			S_n^{\mathrm{sh}}:=\kappa_Q n^2+2\alpha h_Q(0)n
			=\kappa_Q n^2\left(1+\frac{c_Q}{n}\right),
		\]
		and define
		\[
			\widetilde K_n^{\mathrm{sh}}(u,v):=
			\frac{1}{S_n^{\mathrm{sh}}}
			K_n^Q\left(\frac{u}{S_n^{\mathrm{sh}}},\frac{v}{S_n^{\mathrm{sh}}}\right).
		\]
We then obtain from Theorem \ref{hardthm} that 
for each fixed \(s>0\),  
		\begin{equation}\label{eq:optKn}
		    (uv)^{-\alpha/2}\left(\widetilde K_n^{\mathrm{sh}}(u,v)-K_{\Bes}(u,v)\right)
			=\mathcal{O}(n^{-2}),
		\end{equation}
uniformly for \(u,v\in[0,s]\). Indeed, note that
\[
			\widetilde K_n^{\mathrm{sh}}(u,v)
			=
			\left(1+\frac{c_Q}{n}\right)^{-1}
			\widetilde K_n\left(\frac{u}{1+c_Q/n},\frac{v}{1+c_Q/n}\right).
		\]
		Using \eqref{expansion-hard} and the Taylor expansion, it is readily seen that
		\begin{equation}\label{eq:TaytildeKn}
		    	\widetilde K_n^{\mathrm{sh}}(u,v)
			=K_{\Bes}(u,v)
			+\frac1n\left\{\widetilde L_1(u,v)
			-c_Q(u\partial_u+v\partial_v+1)K_{\Bes}(u,v)\right\}
			+\mathcal{O}(n^{-2}).
		\end{equation}
        From the differential equation for the Bessel functions, one has  
		\[
			(u\partial_u+v\partial_v+1)K_{\Bes}(u,v)
			=\frac14 J_\alpha(\sqrt u)J_\alpha(\sqrt v).
		\]
        This, together with \eqref{eq:hard-L1-explicit} and \eqref{eq:TaytildeKn}, implies \eqref{eq:optKn}. Similarly, let 
		\[
			\widetilde F^{\mathrm{sh}}(n;t):=
			\mathbb P\left(\lambda_{\min}>\frac{t}{S_n^{\mathrm{sh}}}\right),
		\]
		then, for every fixed \(s>0\), uniformly for \(t\in[0,s]\),
		\[
			\widetilde F^{\mathrm{sh}}(n;t)
			=F_{\Bes}(t)+\mathcal{O}(n^{-2}).
		\]
		This follows from the fact that
		\(\widetilde F^{\mathrm{sh}}(n;t)=\widetilde F(n;t/(1+c_Q/n))\) and Corollary \ref{hard edge lifting}.

\paragraph{About the proof and organization of the paper}
By the Christoffel-Darboux formula, one has 
\begin{equation*}
		K_n(x, y) = \sqrt{w(x)w(y)} 
        \frac{\gamma_{n-1}}{\gamma_{n}} \frac{p_{n}(x)p_{n-1}(y) - p_{n-1}(x)p_{n}(y)}{x - y}.
	\end{equation*}
It is then natural to establish the large $n$ expansion of the correlation kernel by using the asymptotic expansions of the associated orthogonal polynomials. The difficulty of using this idea lies in checking the divisibility of a certain sequence of polynomials to arrive at a full expansion. One probably needs some hidden symmetry of the coefficients in the expansion, which is not even known for the GUE and the LUE. The starting point of our analysis, however, is a Riemann-Hilbert (RH) characterization of $K_n$; see \eqref{def:K} below. While the leading Airy and Bessel limits follow from the Deift--Zhou steepest descent method \cite{DZ93}, through the works of \cite{DKMVZ99,KMVV04,Vanlessen07}, we are able to refine the RH approach therein to show the existence of a full Edgeworth expansion for the correlation kernel at the spectrum edges and the universal structures for the correction terms. More precisely, we observe a  Wronskian identity for general second-order ODEs (see Proposition~\ref{lemma} below), which allows us to remove the singularity in the Christoffel--Darboux quotient. At both edges, the correction kernels are finite sums of products of Airy or Bessel functions with polynomial coefficients. As a consequence, the corresponding correction operators have a finite rank, and the Fredholm determinant expansions reduce to finite-dimensional determinant calculations, yielding explicit formulas for the correction terms in the extreme-eigenvalue distributions. We believe the methodology developed here can be adapted to explore a variety of similar problems arising from random matrix theory and beyond.

The rest of this paper is organized as follows. 
% For orthogonal polynomial ensembles with varying exponential weights, the results below refine the classical RH derivation of edge universality. The leading Airy and Bessel limits follow from the Deift--Zhou steepest descent method, through the works of Deift and collaborators, Kuijlaars and collaborators, and Vanlessen \cite{DKMVZ99,KMVV04,Vanlessen07}; see also \cite{Deift99}. The correction terms considered here require a termwise expansion of the local parametrices and the algebraic cancellations that remove the apparent singularity in the Christoffel--Darboux quotient.
Section \ref{sec:prel} collects the determinant expansions and Wronskian identities used later. %Subsection~\ref{sec:proofs-det-expansions} proves the passage from kernel expansions to Fredholm determinant expansions and gives the first-correction coefficient formulas. Subsection~\ref{sec:proof-lemma} proves the Wronskian identity for second-order ODEs and records its Airy specialization. 
Section \ref{sec:asyanal} gives sketch of asymptotic analysis of RH problems for the orthogonal polynomials associated with the Guassian-type weight and Laguerre-type weight, respectively. The proofs of our main results are presented in Section \ref{sec:proof}, as the outcomes of our asymptotic analysis.

\paragraph{Notation} We use the following notation throughout this paper.
	\begin{itemize}
		\item We denote by $D(z_0, r)$ the open disc centred at $z_0$ with radius $r > 0$, i.e.,
		\begin{equation}\label{def:dz0r}
			D(z_0, r) := \{ z\in \mathbb{C} \mid |z-z_0|<r \},
		\end{equation}
		and by $\partial D(z_0, r)$ its boundary. When $\partial D(z_0, r)$ is used as part of an RH contour, it is oriented clockwise.
		\item For functions $\phi,\psi$ on an interval $I$, we write $\phi\otimes\psi$ for the rank-one kernel
		\begin{equation}\label{eq:tensor-kernel-notation}
			(\phi\otimes\psi)(x,y):=\phi(x)\psi(y),
			\qquad x,y\in I.
		\end{equation}
		
		\item As usual, the three Pauli matrices $\{\sigma_j\}_{j=1}^3$ are defined by
		\begin{equation}\label{def:Pauli}
			\sigma_1=\begin{pmatrix}
				0 & 1 \\
				1 & 0
			\end{pmatrix},
			\qquad
			\sigma_2=\begin{pmatrix}
				0 & -\ii \\
				\ii & 0
			\end{pmatrix},
			\qquad
			\sigma_3=
			\begin{pmatrix}
				1 & 0 \\
				0 & -1
			\end{pmatrix}.
		\end{equation}
	\end{itemize}
	
	\section{Preliminaries}\label{sec:prel}
    \subsection{Finite-rank corrections and determinant expansions}\label{sec:proofs-det-expansions}
    Following the spirits of \cite{Bornemann24b,Bornemann24}, we have the following proposition which records the passage from kernel expansions to Fredholm determinant expansions with explicit formulas for the coefficients.
    
 %    which allows us to pass from kerne expansion to  
	% The next proposition records the finite-dimensional determinant formula used below. The finite-rank determinant identity from \cite[Appendix~A]{Bornemann24b} reduces the truncated correction operator to a finite-dimensional determinant, from which the coefficient formulas follow; see also \cite[Section~2.2]{Bornemann24}.

	\begin{proposition}\label{prop:finite-rank-det-template}
		Let $I_t\subset\mathbb R$ be a Borel set and $\mathbf K_*$ be a trace-class operator on $L^2(I_t)$ so that $\mathbf I-\mathbf K_*$ is invertible. Fix a positive integer $m$, let $\mathbf K_1,\dots,\mathbf K_m$ be bounded operators on $L^2(I_t)$ and  $\mathbf K_{(\eta)}$ be an $\eta$-dependent trace-class operator on $L^2(I_t)$.Suppose that the Fredholm determinant reduction
		\begin{equation}\label{eq:generic-det-reduction}
			\det(\mathbf I-\mathbf K_{(\eta)})
			=F_*(t)D_{\eta}(t)+\mathcal{O}(\eta^{m+1}),
		\end{equation}
		holds, where
		
        \begin{equation}\label{eq:generic-Eeta}
			D_{\eta}(t):=\det(\mathbf I-\mathbf E_{\eta}),
			\qquad
			F_*(t):=\det(\mathbf I-\mathbf K_*),
		\end{equation}
        with
        \begin{equation}
			\mathbf E_{\eta}:=(\mathbf I-\mathbf K_*)^{-1}(\eta \mathbf K_1+\cdots+\eta^m \mathbf K_m).
		\end{equation}
		Assume further that the truncated correction operator admits the finite-rank decomposition
		\begin{equation}\label{eq:generic-finite-rank-decomposition}
			\eta \mathbf K_1+\cdots+\eta^m \mathbf K_m
			=\sum_{\mu=1}^{r_m}\eta^{q_\mu}c_\mu\,\phi_\mu\otimes\psi_\mu,
			\qquad 1\le q_\mu\le m,
		\end{equation}
		where $c_\mu\in\mathbb C$ and $\phi_\mu,\psi_\mu\in L^2(I_t)$. Here \(r_m\) counts the rank-one terms, while \(q_\mu\) records their orders.
		Then $D_{\eta}(t)$ admits the finite-dimensional representation
		\begin{equation}\label{eq:generic-small-matrix}
			D_{\eta}(t)=\det_{\mu,\nu=1}^{r_m}\left(\delta_{\mu\nu}-\eta^{q_\mu}c_\mu \,u_{\mu\nu}(t)\right),
			\qquad
			u_{\mu\nu}(t):=\left\langle(\mathbf I-\mathbf K_*)^{-1}\phi_\mu,\psi_\nu\right\rangle_{L^2(I_t)}.
		\end{equation}
		Writing
		\begin{equation*}
			\mathbf E_{\eta}=\eta \mathbf E_1+\cdots+\eta^m \mathbf E_m,
			\qquad \mathbf E_j=(\mathbf I-\mathbf K_*)^{-1}\mathbf K_j,
		\end{equation*}
		and expanding
		\begin{equation*}
			D_{\eta}(t)=1+\sum_{j\ge 1} d_j(t)\eta^j,
		\end{equation*}
		we obtain, for $1\le \ell\le m$, the weighted minor formula
		\begin{equation}\label{eq:generic-dm}
			d_\ell(t)=\sum_{k=1}^{\ell}(-1)^k
			\sum_{\substack{1\le p_1<\cdots<p_k\le r_m\\ q_{p_1}+\cdots+q_{p_k}=\ell}}
			\left(\prod_{j=1}^{k} c_{p_j}\right)
			\det_{a,b=1}^{k} u_{p_a p_b}(t).
		\end{equation}
		For $\ell=1$, this gives
		\begin{equation}\label{eq:generic-d1}
			d_1(t)=-\sum_{q_\mu=1}c_\mu u_{\mu\mu}(t)=-\tr\mathbf E_1.
		\end{equation}
	\end{proposition}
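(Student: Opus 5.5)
The plan is to factor out $\mathbf I-\mathbf K_*$, reduce $D_\eta$ to a finite matrix determinant via the Sylvester identity, and then read off the coefficients $d_\ell$ by multilinear expansion.

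\textbf{Step 1: the finite-dimensional representation.} Write $\mathbf R:=(\mathbf I-\mathbf K_*)^{-1}$, which is bounded by assumption. By \eqref{eq:generic-finite-rank-decomposition},
\[
\mathbf E_\eta=\sum_{\mu=1}^{r_m}\eta^{q_\mu}c_\mu\,(\mathbf R\phi_\mu)\otimes\psi_\mu
\]
is of finite rank, so $\det(\mathbf I-\mathbf E_\eta)$ is well defined. Introduce $\mathbf A:L^2(I_t)\to\mathbb C^{r_m}$, $(\mathbf A f)_\nu=\langle f,\psi_\nu\rangle$, and $\mathbf B:\mathbb C^{r_m}\to L^2(I_t)$, $\mathbf B e_\mu=\eta^{q_\mu}c_\mu\mathbf R\phi_\mu$. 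Then $\mathbf E_\eta=\mathbf B\mathbf A$. Sylvester's identity $\det(\mathbf I-\mathbf B\mathbf A)=\det(\mathbf I_{r_m}-\mathbf A\mathbf B)$ holds for finite-rank operators by reduction to a finite-dimensional invariant subspace. The matrix $\mathbf A\mathbf B$ has entries $(\mathbf A\mathbf B)_{\nu\mu}=\eta^{q_\mu}c_\mu u_{\mu\nu}$. Transposing, which does not change the determinant, gives \eqref{eq:generic-small-matrix}. The pairing convention (bilinear versus sesquilinear) only needs to be applied consistently with the definition of $\phi\otimes\psi$ in \eqref{eq:tensor-kernel-notation}; I will use the bilinear one.

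\textbf{Step 2: the weighted minor formula.} Let $M=(\delta_{\mu\nu}-x_\mu u_{\mu\nu})$ with $x_\mu=\eta^{q_\mu}c_\mu$. Each row $\mu$ depends linearly on $x_\mu$. Multilinear expansion of the determinant gives the standard identity
\[
\det M=\sum_{S\subseteq\{1,\dots,r_m\}}(-1)^{|S|}\Bigl(\prod_{\mu\in S}x_\mu\Bigr)\det_{a,b\in S}u_{ab}.
\]
This follows because choosing the identity row for $\mu\notin S$ forces the column index to equal $\mu$, collapsing the determinant to the principal minor on $S$. The $S$-term carries the power $\eta^{\sum_{\mu\in S}q_\mu}$. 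Collecting the coefficient of $\eta^\ell$ for $1\le\ell\le m$ yields \eqref{eq:generic-dm}. Since each $q_\mu\ge1$, we have $|S|\le\ell$, which is why $k$ ranges over $1,\dots,\ell$.

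\textbf{Step 3: the case $\ell=1$.} For $\ell=1$, only singletons $S=\{\mu\}$ with $q_\mu=1$ contribute, so $d_1=-\sum_{q_\mu=1}c_\mu u_{\mu\mu}$. On the other hand,
\[
\mathbf E_1=\sum_{q_\mu=1}c_\mu(\mathbf R\phi_\mu)\otimes\psi_\mu ,
\]
because the decomposition separates orders and the $\eta^1$ part of $\mathbf E_\eta$ is exactly $\mathbf R\mathbf K_1$. The trace of a rank-one operator $f\otimes g$ is $\langle f,g\rangle$, so $\tr\mathbf E_1=\sum_{q_\mu=1}c_\mu u_{\mu\mu}$, giving \eqref{eq:generic-d1}.

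\textbf{Main obstacle.} There is no deep obstacle; the only delicate point is Step 1. Sylvester's identity must be justified in $L^2$ with consistent pairing conventions, and $\mathbf R\phi_\mu$ must lie in $L^2$, which holds because $\mathbf R$ is bounded. Hypothesis \eqref{eq:generic-det-reduction} is not needed for these algebraic identities. It only serves afterwards to convert the $d_\ell$ into the expansion coefficients of $\det(\mathbf I-\mathbf K_{(\eta)})$ as $F_*d_\ell$.
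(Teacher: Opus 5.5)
Your proposal is correct and follows essentially the same route as the paper: factor $\mathbf E_\eta=\mathbf B\mathbf A$ and apply the finite-rank Sylvester identity to get the $r_m\times r_m$ determinant up to transposition, then expand in principal minors with the row factors $\eta^{q_\mu}c_\mu$ pulled out, and finally read off $d_1$ together with the trace of the order-one rank-one pieces of $\mathbf E_1$. You are also right that hypothesis \eqref{eq:generic-det-reduction} plays no role in these algebraic identities; the paper likewise just treats it as part of the hypothesis.
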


	\begin{proof}
		The determinant reduction \eqref{eq:generic-det-reduction} is part of the hypothesis. By \eqref{eq:generic-finite-rank-decomposition},
		\begin{equation*}
			\mathbf E_\eta
			=
			(\mathbf I-\mathbf K_*)^{-1}(\eta \mathbf K_1+\cdots+\eta^m \mathbf K_m)
			=
			\sum_{\mu=1}^{r_m}\eta^{q_\mu}c_\mu\,\bigl((\mathbf I-\mathbf K_*)^{-1}\phi_\mu\bigr)\otimes\psi_\mu.
		\end{equation*}
		Define
		\begin{equation*}
			A:L^2(I_t)\to \mathbb C^{r_m},
			\qquad
			Af:=\bigl(\langle f,\psi_\nu\rangle_{L^2(I_t)}\bigr)_{\nu=1}^{r_m},
		\end{equation*}
		and
		\begin{equation*}
			B_\eta:\mathbb C^{r_m}\to L^2(I_t),
			\qquad
			B_\eta e_\mu:=\eta^{q_\mu}c_\mu\,(\mathbf I-\mathbf K_*)^{-1}\phi_\mu.
		\end{equation*}
		It is then readily seen that $\mathbf E_\eta=B_\eta A$. Since $\mathbf E_\eta$ has a finite rank, the finite-rank determinant identity from \cite[Appendix~A]{Bornemann24b} gives
		\begin{equation*}
			D_\eta(t)=\det(\mathbf I-\mathbf E_\eta)=\det(I_{r_m}-AB_\eta).
		\end{equation*}
		The $(\nu,\mu)$-th entry of $AB_\eta$ is
		\begin{equation*}
			\eta^{q_\mu}c_\mu\,
			\left\langle(\mathbf I-\mathbf K_*)^{-1}\phi_\mu,\psi_\nu\right\rangle_{L^2(I_t)}
			=
			\eta^{q_\mu}c_\mu\,u_{\mu\nu}(t).
		\end{equation*}
		Hence $AB_\eta$ is the transpose of the matrix appearing in \eqref{eq:generic-small-matrix}, and transposition does not change the determinant. This proves \eqref{eq:generic-small-matrix}.
		
		Now let
		\begin{equation*}
			M_\eta(t):=\bigl(\eta^{q_\mu}c_\mu\,u_{\mu\nu}(t)\bigr)_{\mu,\nu=1}^{r_m},
		\end{equation*}
		so that $D_\eta(t)=\det(I-M_\eta(t))$. Expanding in principal minors gives
		\begin{equation*}
			D_\eta(t)
			=
			1+\sum_{k=1}^{r_m}(-1)^k
			\sum_{1\le p_1<\cdots<p_k\le r_m}
			\det\bigl[M_\eta(t)\bigr]_{p_1,\dots,p_k}.
		\end{equation*}
		For a fixed set $\{p_1,\dots,p_k\}$, one may factor $\eta^{q_{p_a}}c_{p_a}$ from the $a$th row, so
		\begin{equation*}
			\det\bigl[M_\eta(t)\bigr]_{p_1,\dots,p_k}
			=
			\eta^{q_{p_1}+\cdots+q_{p_k}}
			\left(\prod_{j=1}^k c_{p_j}\right)
			\det_{a,b=1}^k u_{p_ap_b}(t).
		\end{equation*}
		Collecting the terms of total weight $\ell$ yields \eqref{eq:generic-dm}, which leads to the first equality in  \eqref{eq:generic-d1} by choosing $\ell=1$. The trace identity follows from $\mathbf E_1=(\mathbf I-\mathbf K_*)^{-1}\mathbf K_1$ and the finite-rank representation above.

        This completes the proof of Proposition \ref{prop:finite-rank-det-template}.
	\end{proof}
	
	%\begin{remark}
		Proposition~\ref{prop:finite-rank-det-template} is stated on the Lebesgue spaces $L^2(I_t)$ only for notational convenience. The same proof applies verbatim on $L^2(I_t,\mu)$ for any finite measure $\mu$, with the same finite-dimensional determinant formula and coefficient formulas. We will take $\mathbf K_*=\mathbf K_{\Ai}$, $I_t=(t,\infty)$ and $\eta=n^{-2/3}$ in the proof of 
		Corollary~\ref{soft edge lifting}.
		In view of Theorem~\ref{softthm}, the vectors $\phi_\mu,\psi_\mu$ may be chosen among finite linear combinations of $x^r\Ai(x)$ and $x^s\Ai'(x)$. For the proof of Corollary~\ref{hard edge lifting}, one takes $\mathbf K_*=\mathbf K_{\Bes}$, $I_t=(0,t)$, and $\eta=n^{-1}$. In view of Theorem~\ref{hardthm}, the vectors $\phi_\mu,\psi_\mu$ may be chosen among finite linear combinations of $x^rJ_{\alpha}(\sqrt{x})$ and $x^s\sqrt{x}\,J_{\alpha}'(\sqrt{x})$.

	\subsection{A Wronskian identity for the second order ODEs}\label{sec:proof-lemma}

    Our next proposition reveals a conservation law related to the derivatives of solutions of second order ODEs. It plays a key role in the proof of divisibility arising from the Christoffel--Darboux quotient.

	\begin{proposition}\label{lemma}
		Let $y$ be a solution of the second order ODE
		\begin{equation}\label{def:y}
			y''(x) = p(x)y'(x) + q(x)y(x),
		\end{equation}
		such that $y$ and $y'$ are linearly independent over $\mathbb{C}(x)$. 
		For each $n \ge 0$, there exist functions $P_n$ and $Q_n$ such that
		\begin{equation*}
			y^{(n)}(x) = P_n(x) y'(x) + Q_n(x) y(x).
		\end{equation*}
		We define the generating functions by
		\begin{equation*}
			P(z; w) := \sum_{n=0}^{\infty} P_n(z) \frac{w^n}{n!} \quad \text{and} \quad Q(z; w) := \sum_{n=0}^{\infty} Q_n(z) \frac{w^n}{n!},
		\end{equation*}
		If $l(x)$ is an antiderivative satisfying $l'(x) = -p(x)$, then we have the conservation law
		\begin{equation}\label{conservation law}
			\frac{\partial}{\partial w} \left[ e^{l(z+w)} \left( P \frac{\partial Q}{\partial w} - Q \frac{\partial P}{\partial w} \right) \right] = 0.
		\end{equation}
	\end{proposition}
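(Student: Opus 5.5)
The plan is to identify the generating functions $P(z;w)$ and $Q(z;w)$ with genuine solutions of the ODE in the variable $w$, and then to read the conservation law as Abel's identity for the Wronskian.

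First, I would make the construction of $P_n,Q_n$ explicit. Set $P_0=0$, $Q_0=1$, $P_1=1$, $Q_1=0$. Differentiating $y^{(n)}=P_ny'+Q_ny$ once and using \eqref{def:y} gives the recursion
\[
P_{n+1}=P_n'+pP_n+Q_n,\qquad Q_{n+1}=Q_n'+qP_n .
\]
The linear independence of $y,y'$ over $\mathbb C(x)$ makes $P_n,Q_n$ unique. So whatever operation we perform on $y^{(n)}$ transfers coefficientwise to $P_n,Q_n$.

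Second, I would interpret the generating functions via Taylor's formula. Formally (and genuinely where $p,q$ are analytic, e.g.\ polynomial or rational away from poles),
\[
y(z+w)=\sum_n y^{(n)}(z)\frac{w^n}{n!}=P(z;w)\,y'(z)+Q(z;w)\,y(z).
\]
Now apply $\partial_w^2$ and use that $w\mapsto y(z+w)$ solves the ODE with coefficients $p(z+w),q(z+w)$. This yields
\[
\bigl(\partial_w^2P-p(z+w)\partial_wP-q(z+w)P\bigr)y'(z)+\bigl(\partial_w^2Q-p(z+w)\partial_wQ-q(z+w)Q\bigr)y(z)=0.
\]
To conclude that each bracket vanishes, I would argue directly rather than through independence. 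Let $Y(w)$ be the solution of the ODE in the variable $u=z+w$ with $Y(0)=0$, $Y'(0)=1$. Its Taylor coefficients $Y^{(n)}(0)$ satisfy exactly the recursion above with the initial data $(P_0,P_1)=(0,1)$. Indeed, the recursion is just the statement that any solution has $Y^{(n)}=P_nY'+Q_nY$, evaluated with $Y(0)=0$, $Y'(0)=1$; the $P_n,Q_n$ are universal expressions in $p,q$ and their derivatives, so this holds for every solution, not only the given $y$. Hence $P(z;w)=Y(w)$. Similarly, $Q(z;\cdot)$ is the solution with initial data $(1,0)$. This identifies $P$ and $Q$ as the fundamental pair of solutions of
\[
f''(w)=p(z+w)f'(w)+q(z+w)f(w)
\]
at $w=0$. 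The linear-independence hypothesis is then needed only for the well-definedness of $P_n,Q_n$.

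Third, with $P,Q$ known to be solutions, $W=P\,\partial_wQ-Q\,\partial_wP$ satisfies Abel's equation $\partial_wW=p(z+w)W$. Since $\partial_w l(z+w)=-p(z+w)$, we get
\[
\partial_w\bigl(e^{l(z+w)}W\bigr)=e^{l(z+w)}\bigl(-pW+pW\bigr)=0,
\]
which is \eqref{conservation law}. In fact $W(z;0)=-1$, so $e^{l(z+w)}W=-e^{l(z)}$.

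The main obstacle I expect is justifying that the universal coefficients $P_n,Q_n$ built from $y$ coincide with the Taylor coefficients of every solution. This amounts to checking that the recursion never used any property of $y$ beyond the ODE itself, which is exactly what the inductive derivation shows. A secondary point is convergence: if only a formal identity is wanted, the same computation can be done with formal power series in $w$, where Abel's identity holds coefficientwise.
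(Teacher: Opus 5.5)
Your proof is correct and has the same skeleton as the paper's: the Taylor representation $y(z+w)=P(z;w)\,y'(z)+Q(z;w)\,y(z)$, the fact that $P$ and $Q$ solve the shifted equation $f''=p(z+w)f'+q(z+w)f$ in $w$, and Abel's identity for $W=P\,\partial_wQ-Q\,\partial_wP$. The two arguments differ only in how they show that $P$ and $Q$ solve the shifted equation.

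The paper substitutes the Taylor representation into the shifted equation. It then uses the linear independence of $y,y'$ over $\mathbb{C}(x)$ to make the coefficients of $y'(z)$ and $y(z)$ vanish separately. You instead note that the recursion $P_{n+1}=P_n'+pP_n+Q_n$, $Q_{n+1}=Q_n'+qP_n$ holds for every solution. So $P(z;\cdot)$ and $Q(z;\cdot)$ are exactly the solutions with data $(0,1)$ and $(1,0)$ at $w=0$.

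Your route gains a little. Independence only serves to pin down $P_n,Q_n$, and is not needed at all if you define them by the recursion. The paper's splitting step, by contrast, tacitly requires $p,q\in\mathbb{C}(x)$, so that the bracket coefficients are rational and independence over $\mathbb{C}(x)$ applies; this holds in both the Airy and Bessel applications. With your argument, the Riccati verification of independence in the Bessel application would become unnecessary.

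You also obtain the normalization $e^{l(z+w)}W=-e^{l(z)}$ from $W(z;0)=-1$. The paper uses exactly this later, in the form $(x+w)(P\,\partial_wQ-Q\,\partial_wP)=-x$, without stating it in the proposition. The paper's version is shorter to write but leans on the independence hypothesis.
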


	\begin{proof}
	Let $Y(w) = y(z+w)$. It follows from Taylor's expansion that
		\begin{equation*}
			Y(w) = y'(z) P(z;w) + y(z) Q(z;w).
		\end{equation*}
		Substituting this representation into the shifted equation
		\begin{equation*}
			\partial_w^2 Y = p(z+w)\partial_w Y + q(z+w)Y
		\end{equation*}
		yields
		\begin{align*}
			y'(z) \partial_w^2 P(z;w) + y(z) \partial_w^2 Q(z;w) 
			&= p(z+w) \bigl[ y'(z) \partial_w P(z;w) + y(z) \partial_w Q(z;w) \bigr] \nonumber \\
			&\quad + q(z+w) \bigl[ y'(z) P(z;w) + y(z) Q(z;w) \bigr],
		\end{align*}
        or equivallently, 
		%Collecting the coefficients of $y'(z)$ and $y(z)$ gives
		\begin{align*}
			&y'(z) \left[ \partial_w^2 P(z;w) - p(z+w) \partial_w P(z;w) - q(z+w) P(z;w) \right] \nonumber \\
			&+ y(z) \left[ \partial_w^2 Q(z;w) - p(z+w) \partial_w Q(z;w) - q(z+w) Q(z;w) \right] = 0.
		\end{align*}
		The linear independence of $y$ and $y'$ implies that both $P$ and $Q$ satisfy the differential equation
		\begin{equation*}
			\partial_w^2 \Phi = p(z+w) \partial_w \Phi + q(z+w) \Phi.
		\end{equation*}
		
		Let $\mathcal{W} = P \partial_w Q - Q \partial_w P$. Differentiating $\mathcal{W}$ with respect to $w$ and using the above differential equation gives us 
		\begin{equation*}
			\partial_w \mathcal{W} = P \partial_w^2 Q - Q \partial_w^2 P = p(z+w)(P \partial_w Q - Q \partial_w P) = p(z+w)\mathcal{W}.
		\end{equation*}
	This, together with the fact that $l' = -p$, implies that 
		\begin{equation*}
			\partial_w \left( e^{l(z+w)} \mathcal{W} \right) = e^{l(z+w)} \bigl(l'(z+w) \mathcal{W} + \partial_w \mathcal{W}\bigr) = e^{l(z+w)} \bigl(-p(z+w)\mathcal{W} + p(z+w)\mathcal{W}\bigr) = 0,
		\end{equation*}
        as required.

    This completes the proof of Proposition~\ref{lemma}.
	\end{proof}
	
	%\begin{remark}\label{rem:riccati}
		The solution $y$ and its derivative $y'$ are linearly dependent over the field of rational functions $\mathbb{C}(x)$ if and only if the associated Riccati equation $w' + w^2 - p(x)w - q(x) = 0$ admits a rational solution $w \in \mathbb{C}(x)$. Regarding the stronger property of algebraic independence, Bornemann~\cite{Bornemann25a} recently proved that, for the Airy equation as a special case of the second-order ODEs considered here, a nontrivial solution $u$, its derivative $u'$, and an antiderivative are algebraically independent over $\mathbb{C}(x)$. 
	%\end{remark}
	
	%We record the Airy specialization used later.
	
	\paragraph{Example}
	The Airy function $\mathrm{Ai}$ satisfies
	\begin{equation}
		\mathrm{Ai}''(z) = z\mathrm{Ai}(z).
		\label{airy_ode}
	\end{equation}
	For each integer $m \geqslant 0$, there exist polynomials $\what P_m(z)$ and $\what Q_m(z)$ such that
	\begin{equation}\label{Airy_function}
		\mathrm{Ai}^{(m)}(z) = \what P_m(z)\mathrm{Ai}(z) + \what Q_m(z)\mathrm{Ai}'(z).
	\end{equation}
	Applying Proposition~\ref{lemma} with $p(z)=0$ (so that $l(z)=0$), the conservation law \eqref{conservation law} reduces to
	\begin{equation}\label{eq:Airyconservation}
		\frac{\partial}{\partial w}\left( P\frac{\partial Q}{\partial w} - Q\frac{\partial P}{\partial w} \right) = 0.
	\end{equation}
	Expanding the above equality in powers of $w$, we obtain
	\begin{equation*}
		P\frac{\partial Q}{\partial w} - Q\frac{\partial P}{\partial w}
		= \sum_{N = 0}^{\infty} \left[ \sum_{k = 0}^{N} \frac{1}{k!(N-k)!}
		\left( \what P_k(z) \what Q_{N-k+1}(z) - \what P_{k+1}(z) \what Q_{N-k}(z) \right) \right] w^N.
	\end{equation*}
	Since the left-hand side is independent of $w$, the coefficients of $w^N$ vanish for all $N \geqslant 1$, which gives us
	\begin{equation}\label{airy law}
		\sum_{k = 0}^{N} \frac{1}{k!(N-k)!}
		\left( \what P_k(z) \what Q_{N-k+1}(z) - \what P_{k+1}(z) \what Q_{N-k}(z) \right) = 0.
	\end{equation}
This equality was first proved in \cite{YZ26} by a different method. 
	
	% \begin{remark}
	% 	In the soft-edge analysis, Proposition~\ref{lemma} is used through the Airy identity \eqref{airy law}, which gives the diagonal divisibility needed for the finite-rank representation of the correction kernels. At the hard edge, we use the same conservation law in matrix form, because the variables are $\sqrt u$ and $\sqrt v$ rather than $u$ and $v$.
	% \end{remark}

	\section{Asymptotic analysis of the RH problem for orthogonal polynomials}\label{sec:asyanal}
	The RH problem for orthogonal polynomials was introduced by Fokas, Its, and Kitaev \cite{FIK92}. Given a non-negative weight function on an interval $J \subseteq \mathbb{R}$ and a positive integer $n$, it asks for a $2\times 2$ matrix-valued function $Y(z)$ satisfying
	
	\begin{rhp}\label{rhp:Y}
		\hfill
		\begin{itemize}
			\item [\rm (a)] $Y(z)$ is analytic in $\mathbb{C} \setminus J$.
			\item [\rm (b)]  $Y_+(x) = Y_-(x) \begin{pmatrix} 1 & w(x) \\ 0 & 1 \end{pmatrix}$ for $x \in J$,
			\item [\rm (c)] $Y(z) = (I + \mathcal{O}(z^{-1})) \begin{pmatrix} z^n & 0 \\ 0 & z^{-n} \end{pmatrix}$ as $z \to \infty$.
			\item [\rm (d)] $Y(z)$ satisfies the endpoint conditions dictated by the weight $w$.
		\end{itemize}
	\end{rhp}
	
	The solution of RH problem~\ref{rhp:Y} is given by 
	\begin{align*}
		Y(z) = \begin{pmatrix}
			\gamma_n^{-1} p_n(z) &
			\frac{1}{2\pi \mathrm{i} \gamma_n} \displaystyle\int_{J} \frac{p_n(s) w(s)}{s - z}\, \mathrm{d}s \\
			-2\pi \mathrm{i}\, \gamma_{n-1} p_{n-1}(z)&
			-\gamma_{n-1} \displaystyle\int_{J} \frac{p_{n-1}(s) w(s)}{s - z}\, \mathrm{d}s
		\end{pmatrix}.
	\end{align*}
	and the correlation kernel $K_n(x,y)$ is then related to $Y$ through 
	\begin{equation}\label{def:K}
		K_n(x,y) = \frac{1}{2\pi i(x-y)} \sqrt{w(x)}\sqrt{w(y)}\begin{pmatrix} 0 & 1 \end{pmatrix} Y_+(y)^{-1} Y_+(x) \begin{pmatrix} 1 \\ 0 \end{pmatrix}. 
	\end{equation}

We will perform Deift-Zhou nonlinear steepest descent analysis \cite{Deift99,DZ93} on the above RH problem for the weight function in \eqref{def:w}, which consists of a series of explicit transformations. The transformations needed here are now canonical, so we go over them briefly and only record the key steps and formulas for the convenience of the reader but refer to \cite{Deift99,DKMVZ99,Vanlessen07} for details.   
    
	% We use the Deift--Zhou steepest descent method in its standard form: a $g$-function normalization, a lens opening, local and global parametrices, and a final small-norm problem.

	\subsection{Asymptotic analysis of RH problem \ref{rhp:Y} with Gaussian-type weight}
	For the Gaussian-type unitary ensembles \eqref{eq:Gaussian}, the associated weight function is given by $w(x) = e^{-nV(x)}$ on $\mathbb{R}$. Under Assumption~\ref{def:V} for the potential $V$, We give a sketch of the analysis below, following \cite{Deift99,DKMVZ99}.

	\subsubsection{First transformation: $Y \to \what T$}
	Let $\psi_V(x) \operatorname{d}\!x$ be the equilibrium measure associated with $V$, and define the associated $g$-function by
	\begin{equation}\label{defgx}
		\what g(z):=\int_a^b \log(z-s)\psi_V(s)\,\ud s,
		\qquad z\in\mathbb{C}\setminus(-\infty,b],
	\end{equation}
	where the logarithm is taken on its principal branch. Standard properties
	of the equilibrium measure give the following relations.
	\begin{proposition}\label{prop:gV}
		The function $\what g$ defined in \eqref{defgx} is analytic in
		$\mathbb{C}\setminus(-\infty,b]$ and satisfies
		\begin{align}
			\what g_+(x)+\what g_-(x)-V(x)-\ell_V&=0,
			&&x\in[a,b],\label{gV-1}\\
			2\what g(x)-V(x)-\ell_V&<0,
			&&x\in(b,+\infty),\label{gV-2}\\
			\what g_+(x)-\what g_-(x)&=2\pi \ii,
			&&x\in(-\infty,a),\label{gV-3}\\
			\what g_+(x)-\what g_-(x)
			&=2\pi \ii\int_x^b\psi_V(s)\,\ud s,
			&&x\in[a,b].\label{gV-4}
		\end{align}
	\end{proposition}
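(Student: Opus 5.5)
The four relations follow from the definition \eqref{defgx} of $\what g$ combined with the Euler--Lagrange conditions that characterize the minimizer $\mu_V$ of \eqref{def:energy}. I would treat analyticity and the two jump relations \eqref{gV-3}, \eqref{gV-4} by a direct computation with the principal logarithm. Relations \eqref{gV-1} and \eqref{gV-2} then come from the variational equality and inequality in Assumption~\ref{def:V}.

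\emph{Analyticity and jumps.} For $s\in[a,b]$ the function $z\mapsto\log(z-s)$ is analytic off $(-\infty,s]\subset(-\infty,b]$. Since $\psi_V$ is integrable and compactly supported, differentiation under the integral sign shows that $\what g$ is analytic in $\mathbb C\setminus(-\infty,b]$. Now fix $x\in\mathbb R$. The principal logarithm satisfies $\log_\pm(x-s)=\log|x-s|\pm\pi\ii$ for $s>x$, and it has no jump for $s<x$. Hence
\begin{equation*}
\what g_\pm(x)=\int_a^b\log|x-s|\psi_V(s)\,\ud s\pm\pi\ii\int_{\max(x,a)}^b\psi_V(s)\,\ud s .
\end{equation*}
Passing to the boundary is legitimate because the logarithmic singularity is integrable against the bounded density $\psi_V$, so dominated convergence applies. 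Subtracting the two boundary values gives \eqref{gV-4} for $x\in[a,b]$. For $x<a$ the integral becomes $\int_a^b\psi_V=1$, which gives \eqref{gV-3}.

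\emph{Equality on the support.} Adding the two boundary values gives $\what g_+(x)+\what g_-(x)=2\int\log|x-s|\psi_V(s)\,\ud s$. The Euler--Lagrange equation for the minimizer of \eqref{def:energy} states that $2\int\log|x-s|\,\ud\mu_V(s)-V(x)=\ell_V$ on $\operatorname{supp}\mu_V=[a,b]$. I would cite this as standard (\cite{Deift99,ST97}). If a justification is wanted, it comes from perturbing $\mu_V$ by signed measures of total mass zero. That argument gives equality quasi-everywhere on the support. Continuity of the logarithmic potential, which holds because $\psi_V$ is bounded, then upgrades this to equality everywhere on $[a,b]$. This proves \eqref{gV-1}.

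\emph{Strict inequality outside.} For $x>b$ the function $\log(x-s)$ is real, so $2\what g(x)=2\int\log|x-s|\psi_V(s)\,\ud s$. The strict variational inequality in Assumption~\ref{def:V} therefore gives \eqref{gV-2} directly.

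The only delicate point is the boundary-value computation at the endpoints $a$ and $b$ and the justification of the limit under the integral. Both are handled by the integrability of $\log|x-s|$ against the bounded density $\psi_V$, so I expect no real obstacle.
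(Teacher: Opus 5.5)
Your proposal is correct and is essentially what the paper has in mind. The paper gives no proof beyond asserting that these relations are standard properties of the equilibrium measure. Your computation of $\what g_\pm(x)$ via the principal logarithm, combined with the Euler--Lagrange equality on $[a,b]$ (imported from the variational characterization, since Assumption~\ref{def:V} as written only states the strict inequality outside the support) and that strict inequality for \eqref{gV-2}, is exactly that standard verification.
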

	We also set
	\begin{align}
		&\varphi(z)=\pi \int_b^z \left((s-b)(s-a)\right)^{\frac{1}{2}} h(s) \ud s, \quad z \in \mathbb{C} \setminus(-\infty, b], \label{def:phi} \\
		&\tilde{\varphi}(z)=\pi \int_a^z \left((s-b)(s-a)\right)^{\frac{1}{2}} h(s) \ud s, \quad z \in \mathbb{C} \setminus [a,+\infty). \label{def:tilde-phi}
	\end{align}
	The square roots in \eqref{def:phi} and \eqref{def:tilde-phi} are chosen positive on $(b,+\infty)$ and negative on $(-\infty,a)$, respectively.
	Proposition~\ref{prop:gV} gives
	\begin{align}
		\varphi_+(x)-\varphi_-(x)&=-2\pi \ii,
		&&x\in(-\infty,a),\label{phiV-1}\\
		2\varphi_+(x)=-2\varphi_-(x)
		&=-\bigl(\what g_+(x)-\what g_-(x)\bigr),
		&&x\in(a,b),\label{phiV-2}\\
		2\varphi(x)&=V(x)+\ell_V-2\what g(x),
		&&x\in[b,+\infty),\label{phiV-3}\\
		2\tilde{\varphi}(x)
		&=V(x)+\ell_V-\what g_+(x)-\what g_-(x),
		&&x\in(-\infty,a].\label{tildephiV}
	\end{align}

	Define the first transformation by
	\begin{equation}\label{def:hatT}
		\what T(z)=e^{-\frac{1}{2}n\ell_V\sigma_3}\,Y(z)\,e^{-n \what g(z)\sigma_3}\,e^{\frac{1}{2}n\ell_V\sigma_3},
	\end{equation}
	The transformed matrix $\what T$ then satisfies the following RH problem.
	
	\begin{rhp}\label{rhphatT}
		\hfill
		\begin{itemize}
			
			\item[\rm (a)]
			$\what T(z)$ is defined and analytic in $\mathbb{C}\setminus\mathbb{R}$.
			
			\item[\rm (b)]
			$\what T$ satisfies ${\what T}_+(z) = {\what T}_-(z) J_{\what T}(z),$ for $z \in \mathbb{R}$, where
			\begin{equation}\label{def:JT}
				J_{\what T}(z)=
				\begin{cases}
					\begin{pmatrix}
						e^{2n\varphi_+(x)} & 1 \\
						0 & e^{2n\varphi_-(z)}
					\end{pmatrix},
					& z \in [a,b], \\
					
					\begin{pmatrix}
						1 & e^{-2n\varphi(z)} \\
						0 & 1
					\end{pmatrix},
					& z \in (b,+\infty), \\
					
					\begin{pmatrix}
						1 & e^{-2n\tilde{\varphi}(z)} \\
						0 & 1
					\end{pmatrix},
					& z \in (-\infty,a).
				\end{cases}
			\end{equation}
			
			\item[\rm (c)]
			As $z\to\infty$, $\what T(z)=I+\mathcal{O}\!\left(\frac{1}{z}\right).$
		\end{itemize}
	\end{rhp}

	\subsubsection{Second transformation: $\what T\mapsto {\what S}$}\label{rhp.P1}
	This transformation involves lens-opening, which is based on the factorization
	\begin{align*}
		\begin{pmatrix}
			e^{2n\varphi_{+}(x)} & 1 \\
			0 & e^{2n\varphi_{-}(x)}
		\end{pmatrix}
		=
		\begin{pmatrix}
			1 & 0 \\
			e^{2n\varphi_{-}(x)} & 1
		\end{pmatrix}
		\begin{pmatrix}
			0 & 1 \\
			-1 & 0
		\end{pmatrix}
		\begin{pmatrix}
			1 & 0 \\
			e^{2n\varphi_{+}(x)} & 1
		\end{pmatrix}
	\end{align*}
	of the jump matrix $J_{\what T}$ on $(a, b)$. %This factorization gives a contour deformation for which the jump matrices tend to the identity as $n\to\infty$.
	
	Set
	\begin{align}\label{def:hatS}
		{\what S}= \what T
		\begin{cases}
			\begin{pmatrix}
				1 & 0 \\
				-e^{2n\varphi} & 1
			\end{pmatrix}, & \hbox{$z\in \Omega_{\widehat{R},+}$,}\\
			\begin{pmatrix}
				1 & 0 \\
				e^{2n\varphi} & 1
			\end{pmatrix}, & \hbox{$z\in \Omega_{\widehat{R},-}$,} \\
			I, & \hbox{elsewhere,}
		\end{cases}
	\end{align}
    where the regions $\Omega_{\widehat{R},\pm}$ are shown in Figure \ref{fig:lenses}. 
	We the following  RH problem for $\what S$.
	
	\begin{rhp}\label{rhS}
		\hfill
		\begin{itemize}
			
			\item[\rm (a)] 
			$\what S(z)$ is defined and analytic in $\mathbb{C}\setminus \Gamma_{\what S}$, where
			\begin{equation}\label{jumpS}
				\Gamma_{\what S} = \mathbb{R} \cup \partial \Omega_{\what R, \pm};
			\end{equation}
			see Figure \ref{fig:lenses} for an illustration.
			\item[\rm (b)] 
			${\what S}$ satisfies the jump condition
			${\what S}_+(z) = {\what S}_-(z) J_{\what S}(z)$ for $z\in \Gamma_{\what S}$, where
			\begin{equation}\label{def:JS}
				J_{\what S}(z)
				=
				\begin{cases}
					\begin{pmatrix}
						0 & 1 \\
						-1 & 0
					\end{pmatrix},
					& z\in[a,b], \\
					
					\begin{pmatrix}
						1 & 0 \\
						e^{2n\varphi(z)} & 1
					\end{pmatrix},
					& z\in \partial \Omega_{\what R, \pm}, \\
					
					\begin{pmatrix}
						1 & e^{-2n\varphi(z)} \\
						0 & 1
					\end{pmatrix},
					& z\in(b,+\infty), \\
					
					\begin{pmatrix}
						1 & e^{-2n\tilde{\varphi}(z)} \\
						0 & 1
					\end{pmatrix},
					& z\in(-\infty,a).
				\end{cases}
			\end{equation}
			
			\item[\rm (c)] 
			As $z\to\infty$, ${\what S}(z)=I+\mathcal{O}\left(\frac{1}{z}\right).$
		\end{itemize}
	\end{rhp}

	\begin{figure}[t]
		\centering
		\begin{tikzpicture}[x=1.2pt,y=1.2pt,yscale=-1,xscale=1, thin]
			
			\draw    (158,118.85) -- (474.11,117.86) ;
			
			\draw [fill={rgb, 255:red, 0; green, 0; blue, 0 } ,fill opacity=1 ] (200.13,118.98) -- (194.9,121.7) -- (195.03,116.05) -- cycle ;
			
			\draw [fill={rgb, 255:red, 0; green, 0; blue, 0 } ,fill opacity=1 ] (421.2,118) -- (415.97,120.71) -- (416.1,115.06) -- cycle ;
			
			\draw   (259.24,117.86) .. controls (259.24,103.9) and (281.76,92.58) .. (309.55,92.58) .. controls (337.34,92.58) and (359.87,103.9) .. (359.87,117.86) .. controls (359.87,131.82) and (337.34,143.14) .. (309.55,143.14) .. controls (281.76,143.14) and (259.24,131.82) .. (259.24,117.86) -- cycle ;
			
			\draw [fill={rgb, 255:red, 0; green, 0; blue, 0 } ,fill opacity=1 ] (311.7,143.64) -- (306.47,146.35) -- (306.6,140.7) -- cycle ;
			\draw [fill={rgb, 255:red, 0; green, 0; blue, 0 } ,fill opacity=1 ] (311.7,92.35) -- (306.47,95.07) -- (306.6,89.42) -- cycle ;
			
			\draw (251.15,129.58) node [anchor=north west][inner sep=0.75pt]   [align=left] {$a$};
			\draw (357.56,128.59) node [anchor=north west][inner sep=0.75pt]   [align=left] {$b$};
			
			\draw (301,100) node [anchor=north west][inner sep=0.75pt]   [align=left] {$\Omega_{\widehat{R},+}$};
			\draw (300,125) node [anchor=north west][inner sep=0.75pt]   [align=left] {$\Omega_{\widehat{R},-}$};
			
			\draw (301,70) node [anchor=north west][inner sep=0.75pt]   [align=left] {$\partial \Omega_{\widehat{R},+}$};
			\draw (301,150) node [anchor=north west][inner sep=0.75pt]   [align=left] {$\partial \Omega_{\widehat{R},-}$};
			
		\end{tikzpicture}
		\caption{Regions $\Omega_{\widehat{R}, \pm}$ and the jump contours of the RH problem for $\what S$.}
		\label{fig:lenses}
	\end{figure}

	\subsubsection{Global parametrix}
	For $z$ bounded away from $(a,b)$, it follows from Proposition \ref{prop:gV} that the jump matrices $J_{\what S}$ in \eqref{def:JS} converge exponentially fast to the identity matrix as $n \to \infty$. The global parametrix $\what N$ retains only the jump on $(a,b)$ and is characterized by the following RH problem.
	\begin{rhp}\label{rhN}
		\hfill
		\begin{itemize}
			
			\item[\rm (a)] 
			$\what N (z)$ is defined and analytic in $\mathbb{C} \setminus [a,b]$.
			
			\item[\rm (b)] 
			$\what N$ satisfies $
			\what N_{+}(z) = \what N_{-}(z)
			\begin{pmatrix}
				0 & 1 \\
				-1 & 0
			\end{pmatrix}$ for $z \in [a,b]$.
			
			\item[\rm (c)] 
			As $z \to \infty$, $\what N(z)= I+\mathcal{O} \left(\frac{1}{z}\right).$
			
		\end{itemize}
	\end{rhp}

	This RH problem can be solved explicitly; see \cite{Deift99}. Its solution is given by
	\begin{equation} \label{def:N}
		\what N(z) =  \begin{pmatrix}
			\dfrac{1}{2}\left(\what\gamma(z) + \what\gamma^{-1}(z)\right) & \dfrac{1}{2\mathrm{i}}\left(\what\gamma(z) - \what\gamma^{-1}(z)\right) \\
			- \dfrac{1}{2\mathrm{i}}\left(\what\gamma(z) - \what\gamma^{-1}(z)\right) & \dfrac{1}{2}\left(\what\gamma(z) + \what\gamma^{-1}(z)\right)
		\end{pmatrix},
	\end{equation}
	where $\what\gamma (z)=\left(\dfrac{z-b}{z-a} \right)^{\frac{1}{4}}$, $z\in \mathbb{C}\setminus (a,b)$.
	
	\subsubsection{Local parametrices}
	In a sufficiently small neighborhood of the endpoint $b$, we introduce a local parametrix for $\what S$, characterized by the following RH problem.
	\begin{rhp}\label{rhP-b}
		\hfill
		\begin{itemize}
			
			\item[\rm (a)] 
			$\what P^{(b)}(z)$ is defined and analytic in $D(b, \varepsilon) \setminus \Gamma_{\what S}$, where $D\left(b, \varepsilon\right)$ and $\Gamma_{\what S}$  are defined in \eqref{def:dz0r} and \eqref{jumpS},	respectively.

			\item[\rm (b)] 
			For $z \in D(b, \varepsilon) \cap \Gamma_{\what S}$, we have
			\begin{equation}\label{eq:P0-jump}
				\what P^{(b)}_+(z) = \what P^{(b)}_-(z) J_{\what S}(z),
			\end{equation}
			where $J_{\what S}(z)$ is defined in \eqref{def:JS}.

			\item[\rm (c)] 
			As $n \to \infty,$ $\what P^{(b)}(z)$ satisfies the matching condition
			\begin{equation}\label{eq:Pb-match}
				\what P^{(b)}(z)=\left(I+\mathcal{O} \left(n^{-1}\right) \right) \what N(z), \qquad z \in \partial D(b, \varepsilon),
			\end{equation}
			where $\what N(z)$ is given in \eqref{def:N}.
			
		\end{itemize}
	\end{rhp}
	
	This local RH problem is solved in terms of the Airy parametrix $\Phi^{(\operatorname{Ai})}(z)$ from Appendix \ref{airy}. To this end, we define
	\begin{equation}\label{def:f}
		\what f(z)=\left[\frac{3}{2}\varphi (z)\right]^{\frac{2}{3}}
		=\kappa_V\left(z-b\right)+d_b\left(z-b\right)^2+\Boh((z-b)^3), \quad z \to b,
	\end{equation}
	where $\varphi(z)$ is defined in \eqref{def:phi},  $\kappa_V$ and $d_b$ are defined in \eqref{def:kappaV-soft} and \eqref{def:db-soft}, respectively. The function $\what f(z)$ is a conformal map in $D(b, \varepsilon)$.

	Set
	\begin{equation}\label{def:Pb}
		\what P^{(b)}(z) = \what E_n^{(b)}(z) \Phi^{(\Ai)}\left(n^{\frac{2}{3}}\what f(z)\right) e^{n\varphi(z) \sigma_3},
	\end{equation} 
	where
	\begin{equation}\label{def:Enb}
		\what E_n^{(b)}(z)=\frac{1}{\sqrt{2} } \what N(z)
		\begin{pmatrix}
			1 & -\mathrm{i} \\
			-\mathrm{i} & 1
		\end{pmatrix} 
		\left(n^{\frac{2}{3}}\what f(z)\right)^{\frac{1}{4}\sigma _3 }.
	\end{equation}
	
	\begin{proposition}\label{pro:Pb}
		The function $\what P^{(b)}(z)$ defined in \eqref{def:Pb} solves the RH problem \ref{rhP-b}.
	\end{proposition}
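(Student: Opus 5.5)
We check the three defining properties of RH problem \ref{rhP-b} in turn: analyticity, the jump relation \eqref{eq:P0-jump}, and the matching condition \eqref{eq:Pb-match}. Throughout, $\Phi^{(\Ai)}$ is the standard Airy parametrix of Appendix \ref{airy}. It has jumps $\begin{pmatrix}1&1\\0&1\end{pmatrix}$ on $(0,\infty)$, $\begin{pmatrix}1&0\\1&1\end{pmatrix}$ on the rays $\arg\zeta=\pm 2\pi/3$, and $\begin{pmatrix}0&1\\-1&0\end{pmatrix}$ on $(-\infty,0)$. As $\zeta\to\infty$ it has the expansion
\[
\Phi^{(\Ai)}(\zeta)=\zeta^{-\frac14\sigma_3}\frac{1}{\sqrt2}\begin{pmatrix}1&\ii\\ \ii&1\end{pmatrix}\Bigl(I+\mathcal{O}(\zeta^{-3/2})\Bigr)e^{-\frac23\zeta^{3/2}\sigma_3}.
\]
We choose the lens boundaries $\partial\Omega_{\what R,\pm}$ inside $D(b,\varepsilon)$ so that $\what f$ maps them into the rays $\arg\zeta=\pm2\pi/3$. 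This is possible because $\what f$ is conformal with $\what f'(b)=\kappa_V>0$, by \eqref{def:f}.

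\emph{Analyticity of $\what E_n^{(b)}$.} We first show that $\what E_n^{(b)}$ is analytic in $D(b,\varepsilon)$; this is the only genuinely delicate point. Off $(a,b)$ this is clear, so only the cut $(b-\varepsilon,b)$ needs attention. There $\what N_+=\what N_-\begin{pmatrix}0&1\\-1&0\end{pmatrix}$. Since $\what f<0$ on this interval, $(n^{2/3}\what f)^{\frac14\sigma_3}_+=(n^{2/3}\what f)^{\frac14\sigma_3}_-\,e^{\frac{\pi \ii}{2}\sigma_3}$. A direct $2\times2$ computation then verifies
\[
\begin{pmatrix}0&1\\-1&0\end{pmatrix}\begin{pmatrix}1&-\ii\\-\ii&1\end{pmatrix}e^{\frac{\pi\ii}{2}\sigma_3}=\begin{pmatrix}1&-\ii\\-\ii&1\end{pmatrix},
\]
so $\what E_{n,+}^{(b)}=\what E_{n,-}^{(b)}$ on the cut. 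At $z=b$, the factor $\what\gamma^{\pm1}$ in $\what N$ behaves like $(z-b)^{\pm1/4}$, which is cancelled by $(z-b)^{\pm1/4}$ coming from $\what f^{\frac14\sigma_3}$. Hence the isolated singularity at $b$ is at most $\mathcal{O}(|z-b|^{-1/2})$, and is therefore removable.

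\emph{Jump relation.} Since $\what E_n^{(b)}$ is analytic, the jumps of $\what P^{(b)}$ are $e^{-n\varphi_-\sigma_3}J_{\Phi}\,e^{n\varphi_+\sigma_3}$, where $J_\Phi$ is the jump of $\Phi^{(\Ai)}$ at $n^{2/3}\what f(z)$. We use $\frac23(n^{2/3}\what f)^{3/2}=n\varphi$ together with the relations \eqref{phiV-2} and \eqref{phiV-3}. On $(b,b+\varepsilon)$, conjugating $\begin{pmatrix}1&1\\0&1\end{pmatrix}$ gives $\begin{pmatrix}1&e^{-2n\varphi}\\0&1\end{pmatrix}$. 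On the lens boundaries we obtain $\begin{pmatrix}1&0\\e^{2n\varphi}&1\end{pmatrix}$. On $(b-\varepsilon,b)$, using $\varphi_+=-\varphi_-$, we obtain $\begin{pmatrix}0&1\\-1&0\end{pmatrix}$. These match $J_{\what S}$ in \eqref{def:JS}.

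\emph{Matching condition.} On $\partial D(b,\varepsilon)$ we have $|n^{2/3}\what f|\asymp n^{2/3}$. Substituting the asymptotic expansion of $\Phi^{(\Ai)}$, the exponential factors cancel against $e^{n\varphi\sigma_3}$. The constant matrices cancel by the identity $\begin{pmatrix}1&-\ii\\-\ii&1\end{pmatrix}\begin{pmatrix}1&\ii\\ \ii&1\end{pmatrix}=2I$. This leaves
\[
\what P^{(b)}\what N^{-1}=I+\what N\,\mathcal{O}(n^{-1})\,\what N^{-1}.
\]
Since $\what N$ and $\what N^{-1}$ are bounded on the circle, this gives \eqref{eq:Pb-match}.

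I expect the only real obstacle to be the branch bookkeeping in the cut and removability argument for $\what E_n^{(b)}$; the remaining steps are routine substitutions.
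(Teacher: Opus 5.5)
Your proof is correct and has the same three-part structure as the paper's: analyticity of $\what E_n^{(b)}$, then the jumps of $\Phi^{(\Ai)}$ conjugated by $e^{n\varphi\sigma_3}$, then matching from \eqref{infty:Ai}. You also write out the conjugations that the paper leaves implicit.

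The one step handled differently is the analyticity of $\what E_n^{(b)}$.

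\emph{The paper's route.} It multiplies out $\what N(z)\begin{pmatrix}1&-\ii\\-\ii&1\end{pmatrix}=\begin{pmatrix}1&-\ii\\-\ii&1\end{pmatrix}\what\gamma(z)^{-\sigma_3}$. This turns $\what E_n^{(b)}$ into a constant matrix times $\bigl(n^{2/3}(z-a)\what f(z)/(z-b)\bigr)^{\sigma_3/4}$, which is \eqref{def:E}; the overall minus sign printed there is harmless. Since $(z-a)\what f(z)/(z-b)$ is analytic near $b$ with value $(b-a)\kappa_V>0$ at $b$, analyticity is immediate.

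\emph{Your route.} You check that the jumps of $\what N$ and $(n^{2/3}\what f)^{\sigma_3/4}$ cancel on $(b-\varepsilon,b)$ via your $2\times2$ identity, which is correct. You then remove an isolated singularity of size $\mathcal{O}(|z-b|^{-1/2})$.

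\emph{What each buys.} Your argument is the generic one and needs no algebra on $\what N$, but it only establishes analyticity. The paper's closed form is what gets reused later: it isolates the factor $n^{\pm\sigma_3/6}$ and the function $(z-a)\what f(z)/(z-b)$ in \eqref{ERRE}, and the coefficient $c_b$ in \eqref{eq:e1_explicit} is read off from it.

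One small slip: Appendix~\ref{airy} places $\Sigma_2$ and $\Sigma_4$ at $\arg\zeta=\pm3\pi/4$, not $\pm2\pi/3$. This does not matter, since you fit the lens boundaries to whichever rays are used.
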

	\begin{proof}
		From \eqref{def:N} and \eqref{def:Enb},
		\begin{equation}\label{def:E}
			\what E_n^{(b)}(z) = -\frac{1}{\sqrt{2}}\begin{pmatrix}
				1 & -\mathrm{i} \\
				-\mathrm{i} & 1
			\end{pmatrix} \left(n^{\frac{2}{3}}(z-a)\frac{\what f(z)}{z-b}\right)^{\frac{1}{4}\sigma _3},
		\end{equation}
		Since $\what f(z)$ has a simple zero at $z = b$, \eqref{def:E} implies that $\what E_n^{(b)}(z)$ is analytic in $D(b,\varepsilon)$.
		
		The jump relation \eqref{eq:P0-jump} follows from the analyticity of $\what E_n^{(b)}(z)$ together with the jump condition \eqref{jump:Airy} for the Airy parametrix. The matching condition \eqref{eq:Pb-match} follows from the large-$z$ asymptotics \eqref{infty:Ai} of the Airy parametrix.
	\end{proof}

	% \subsubsection{Local parametrix near $z=a$}
	% The local parametrix near $z=a$ satisfies the analogous RH problem.
	
	% \begin{rhp}\label{rhP-a}
	% 	\hfill
	% 	\begin{itemize}
			
	% 		\item[\rm (a)] 
	% 		$\what P^{(a)}(z)$ is defined and analytic in $D(a, \varepsilon) \setminus \Gamma_{\what S}$, where $D\left(a, \varepsilon\right)$ and $\Gamma_{\what S}$  are defined in \eqref{def:dz0r} and \eqref{jumpS},	respectively.

	% 		\item[\rm (b)] 
	% 		For $z \in D(a, \varepsilon) \cap \Gamma_{\what S}$, we have
	% 		\begin{equation*}
	% 			\what P^{(a)}_+(z) = \what P^{(a)}_-(z) J_{\what S}(z),
	% 		\end{equation*}
	% 		where $J_{\what S}(z)$ is defined in \eqref{def:JS}.
			
	% 		\item[\rm (c)] 
	% 		As $n \to \infty,$ $\what P^{(a)}(z)$ satisfies the matching condition
	% 		\begin{equation}\label{eq:Pa-match}
	% 			\what P^{(a)}(z)=\left(I+\mathcal{O} \left(n^{-1}\right) \right) \what N(z), z \in \partial D(a, \varepsilon),
	% 		\end{equation}
	% 		where $\what N(z)$ is given in \eqref{def:N}.
			
	% 	\end{itemize}
	% \end{rhp}

   In a small neighborhood  $D(a, \varepsilon)$ 
   of $z=a$, one has an RH problem for $\what P^{(a)}$ similar to RH problem \ref{rhP-b}. 
	It can be solved from $\what P^{(b)}(z)$ by interchanging $a$ and $b$ and conjugating by $\sigma_3$ defined in \eqref{def:Pauli}. We oimt the details here. 
	
	\subsubsection{Final transformation}
	Set
	\begin{equation}\label{def:hatR}
		\what R(z) = \begin{cases}
			\what S(z) \what P^{(a)}(z)^{-1}, & \quad z \in D(a, \varepsilon),\\
			\what S(z) \what P^{(b)}(z)^{-1}, & \quad z \in D(b, \varepsilon),\\
			\what S(z) \what N(z)^{-1}, & \quad \textrm{elsewhere}.
		\end{cases}
	\end{equation}
	From the RH problems for $\what S$, $\what N$, $\what P^{(a)}$, and $\what P^{(b)}$, both $a$ and $b$ are removable singularities of $\what R$. Thus $\what R$ satisfies the following RH problem.
	
\begin{figure}[htbp]
	\centering
	\begin{tikzpicture}[x=1.2pt,y=1.2pt,yscale=-1,xscale=1, thin]
		
		\draw (190,145) .. controls (190,131.19) and (201.19,120) .. (215,120) .. controls (228.81,120) and (240,131.19) .. (240,145) .. controls (240,158.81) and (228.81,170) .. (215,170) .. controls (201.19,170) and (190,158.81) .. (190,145) -- cycle ;
		\draw [fill={rgb, 255:red, 0; green, 0; blue, 0 } ,fill opacity=1 ] (217.5,120) -- (212.5,122.87) -- (212.5,117.13) -- cycle ;
		\draw [fill={rgb, 255:red, 0; green, 0; blue, 0 } ,fill opacity=1 ] (214,145) .. controls (214,144.45) and (214.45,144) .. (215,144) .. controls (215.55,144) and (216,144.45) .. (216,145) .. controls (216,145.55) and (215.55,146) .. (215,146) .. controls (214.45,146) and (214,145.55) .. (214,145) -- cycle ;
		
		\draw (300,145) .. controls (300,131.19) and (311.19,120) .. (325,120) .. controls (338.81,120) and (350,131.19) .. (350,145) .. controls (350,158.81) and (338.81,170) .. (325,170) .. controls (311.19,170) and (300,158.81) .. (300,145) -- cycle ;
		\draw [fill={rgb, 255:red, 0; green, 0; blue, 0 } ,fill opacity=1 ] (328,120) -- (323,122.87) -- (323,117.13) -- cycle ;
		\draw [fill={rgb, 255:red, 0; green, 0; blue, 0 } ,fill opacity=1 ] (324,144) .. controls (324,143.45) and (324.45,143) .. (325,143) .. controls (325.55,143) and (326,143.45) .. (326,144) .. controls (326,144.55) and (325.55,145) .. (325,145) .. controls (324.45,145) and (324,144.55) .. (324,144) -- cycle ;
		
		\draw (110,145) -- (190,145) ;
		\draw [fill={rgb, 255:red, 0; green, 0; blue, 0 } ,fill opacity=1 ] (155,145) -- (149.39,147.87) -- (149.39,142.13) -- cycle ;
		
		\draw (350,145) -- (430,145) ;
		\draw [fill={rgb, 255:red, 0; green, 0; blue, 0 } ,fill opacity=1 ] (395,145) -- (389.39,147.87) -- (389.39,142.13) -- cycle ;
		
		\draw (227.81,123.69) .. controls (234.13,115.74) and (250.41,110.08) .. (269.5,110.08) .. controls (288.59,110.08) and (304.87,115.74) .. (311.19,123.69) ;
		\draw [fill={rgb, 255:red, 0; green, 0; blue, 0 } ,fill opacity=1 ] (271.77,110.08) -- (265.41,112.95) -- (265.41,107.21) -- cycle ;
		
		\draw (312.19,167.52) .. controls (305.87,175.48) and (289.59,181.13) .. (270.5,181.13) .. controls (251.41,181.13) and (235.13,175.48) .. (228.81,167.52) ;
		\draw [fill={rgb, 255:red, 0; green, 0; blue, 0 } ,fill opacity=1 ] (274.59,181.13) -- (268.23,184) -- (268.23,178.27) -- cycle ;
		
		\draw (210,150) node [anchor=north west][inner sep=0.75pt] [align=left] {$a$};
		\draw (321,150) node [anchor=north west][inner sep=0.75pt] [align=left] {$b$};
		
		\draw (260,90) node [anchor=north west][inner sep=0.75pt] [align=left] {$\partial \Omega_{\what R,+}$};
		\draw (260,190) node [anchor=north west][inner sep=0.75pt] [align=left] {$\partial \Omega_{\what R,-}$};
		
	\end{tikzpicture}
	\caption{The jump contours $\Gamma_{\what R}$ of the RH problem for $\what R$.}
	\label{fig:contour_R}
\end{figure}
	
	\begin{rhp}\label{rhp:R}
		\hfill
		\begin{itemize}
			\item [\rm{(a)}] $\what R(z)$ is defined and analytic in $\mathbb{C} \setminus \Gamma_{\what R}$; see Figure \ref{fig:contour_R} for an illustration of $\Gamma_{\what R}$.
			\item [\rm{(b)}] For $z \in \Gamma_{\what R}$, we have
			\begin{equation}\label{eq:Rjump}
				\what R_+(z) = \what R_-(z) J_{\what R}(z),
			\end{equation}
			where the jump matrix $J_{\what R}(z)$ is given by
			\begin{equation*}
				J_{\what R}(z) = \begin{cases}
					\what P^{(a)}(z)\what N^{-1}(z), & \quad z \in \partial D(a, \varepsilon),\\
					\what P^{(b)}(z)\what N^{-1}(z), & \quad z \in \partial D(b, \varepsilon),\\
					\what N(z) J_{\what S(z)} \what N^{-1}(z), & \quad z \in \Gamma_{\what R} \setminus \left(\partial D(a, \varepsilon) \cup \partial D(b, \varepsilon)\right).
				\end{cases}
			\end{equation*}
			\item [\rm{(c)}] As $z \to \infty$, $\what R(z) = I + \mathcal{O} \left(\frac{1}{z} \right).$
		\end{itemize}
	\end{rhp}
	
	We recall the asymptotic expansion of $\what P^{(b)}(z)\what N(z)^{-1}$ on $\partial D(b, \varepsilon)$ derived in \cite{KT09}:
	\begin{equation}\label{Pbasy}
		\what P^{(b)}(z) \what N(z)^{-1} \sim I + \sum_{k=1}^{\infty} \frac{1}{n^k} \Delta_k(z),
	\end{equation}
	where for even $k$,
	\begin{align}\label{evenk}
		\Delta_k(z) &= \frac{1}{\sqrt{\pi}}\left(\frac{\Gamma\left(3 k+\frac{1}{2}\right)}{9^k(2 k)!}-\frac{\Gamma\left(3 k-\frac{3}{2}\right)}{4 \cdot 9^{k-1}(2(k-1))!}\right) \frac{1}{\left(\frac{3}{2} \varphi(z)\right)^k} I \notag \\
		&\quad -\frac{1}{4 \sqrt{\pi}} \frac{\Gamma\left(3 k-\frac{3}{2}\right)}{9^{k-1}(2(k-1))!} \frac{1}{\left(\frac{3}{2} \varphi(z)\right)^k} \sigma_2,
	\end{align}
	and for odd $k$,
	\begin{align}\label{oddk}
		\Delta_k(z) &= -\frac{\what\gamma(z)^2}{\left(\frac{3}{2} \varphi(z)\right)^k} \frac{1}{2 \sqrt{\pi}}\left(\frac{\Gamma\left(3 k+\frac{1}{2}\right)}{9^k(2 k)!}-\frac{\Gamma\left(3 k-\frac{3}{2}\right)}{2 \cdot 9^{k-1}(2(k-1))!}\right)\left(\sigma_3+\mathrm{i} \sigma_1\right) \notag \\
		&\quad -\frac{\what\gamma(z)^{-2}}{\left(\frac{3}{2} \varphi(z)\right)^k} \frac{1}{2 \sqrt{\pi}} \frac{\Gamma\left(3 k+\frac{1}{2}\right)}{9^k(2 k)!}\left(\sigma_3-\mathrm{i} \sigma_1\right).
	\end{align}
	Here $\Gamma(\cdot)$ denotes the Gamma function, and $\sigma_k$, $k=1,2,3$, are the Pauli matrices defined in \eqref{def:Pauli}.
	
	A similar expansion
	\begin{equation*}
		\what P^{(a)}(z) \what N(z)^{-1} \sim I + \sum_{k=1}^{\infty} \frac{1}{n^k} \widetilde \Delta_k(z),
	\end{equation*}
	holds for $\partial D(a, \varepsilon)$.
	
	The jump matrices $J_{\what R}(z) = \what N(z)S(z)\what N(z)^{-1}$ tend to the identity matrix at an exponential rate as $n \to \infty$. The small-norm analysis of \cite{Deift99,DZ93} gives
	\begin{equation}\label{asy:R}
		\what R(z) \sim I+ \sum_{k=1}^{\infty} \frac{1}{n^k} \what R^{(k)}(z),
	\end{equation}
	where $\what R^{(k)}$ satisfies an additive RH problem. 
	\begin{rhp}\label{rhp:R_k}
		\hfill
		\begin{itemize}
			\item [\rm{(a)}] $\what R^{(k)}(z)$ is analytic on $\mathbb{C} \setminus \left(\partial D(a, \varepsilon) \cup \partial D(b, \varepsilon)\right)$.
			
			\item [\rm{(b)}] For $z \in \partial D(a, \varepsilon) \cup \partial D(b, \varepsilon)$, we have
			\begin{equation*}
				\what R^{(k)}_{+}(z) = \begin{cases}
					\what R^{(k)}_{-}(z) + \displaystyle\sum_{l=0}^{k-1} \what R^{(l)}_{+}(z) \widetilde \Delta_{k-l}(z), & \quad z \in \partial D(a, \varepsilon), \\
					\what R^{(k)}_{-}(z) + \displaystyle\sum_{l=0}^{k-1} \what R^{(l)}_{+}(z) \Delta_{k-l}(z), & \quad z \in \partial D(b, \varepsilon).
				\end{cases}
			\end{equation*}
			\item [\rm{(c)}] As $z \to \infty$,  $\what R^{(k)}(z) = \mathcal{O}\left(\frac{1}{z}\right).$
		\end{itemize}
	\end{rhp}
	Following \cite[Lemma~4.1]{KT09}, the structural properties of $\Delta_k(z)$ given in \eqref{evenk} and \eqref{oddk}, together with those of $\widetilde \Delta_k(z)$, determine the algebraic form of $\what R^{(k)}(z)$. Specifically, $\what R^{(k)}(z)$ is a linear combination of $\sigma_1$ and $\sigma_3$ when $k$ is odd, and a linear combination of the identity matrix $I$ and $\sigma_2$ when $k$ is even. Hence we may write $\what R^{(k)}(z)$ as
	\begin{align}\label{def:Rk}
		\what R^{(k)}(z) =
		\begin{cases}
			\what \alpha_{1,k}(z) (\sigma_3+\mathrm{i}\sigma_1) + \what \alpha_{2,k}(z) (\sigma_3-\mathrm{i}\sigma_1), & \text{if } k \text{ is odd}, \\[1ex]
			\what \alpha_{1,k}(z) (I+\sigma_2) + \what \alpha_{2,k}(z) (I-\sigma_2), & \text{if } k \text{ is even}. 
		\end{cases}
	\end{align}

	\subsection{Asymptotic analysis of RH problem \ref{rhp:Y} with Laguerre-type weight}\label{subsec:hard-rh-analysis}
	For the Laguerre-type ensembles \eqref{eq:Laguerre}, the associated weight function reads $w(x)=x^{\alpha}e^{-nQ(x)}$, $\alpha>-1$ on the positive real axis. Following 
    \cite{Vanlessen07,KMVV04},  the principal new feature here, as compared with Gaussian-type weight, is the singularity at the origin, which leads to a Bessel parametrix instead of the Airy one.

 %    with varying exponential weights
	% \begin{equation*}
	% 	w(x)=x^{\alpha}e^{-nQ(x)}, \qquad x\in (0,+\infty), \qquad \alpha>-1,
	% \end{equation*}
	% where the external potential $Q$ is one-cut regular at the hard edge in the sense of Assumption~\ref{def:Q}. We follow the Deift--Zhou steepest descent method in the form developed for Laguerre-type weights in \cite{Vanlessen07,ZCD14}. 
	
	%For the weight $w(x)=x^{\alpha}e^{-nQ(x)}$,
    
    To proceed, we also note that RH problem \ref{rhp:Y} in the present case is subject to the following local condition at the origin:
	\begin{equation}\label{hard-origin-Y}
		Y(z)=
		\begin{cases}
			\mathcal{O}\!\begin{pmatrix}
				1 & \lvert z\rvert^{\alpha}\\
				1 & \lvert z\rvert^{\alpha}
			\end{pmatrix}, & -1<\alpha<0,\\[2ex]
			\mathcal{O}\!\begin{pmatrix}
				1 & \log (1/\lvert z\rvert)\\
				1 & \log (1/\lvert z\rvert)
			\end{pmatrix}, & \alpha=0,\\[2ex]
			\mathcal{O}\!\begin{pmatrix}
				1 & 1\\
				1 & 1
			\end{pmatrix}, & \alpha>0,
		\end{cases}
		\qquad z\to 0, \quad z\in \mathbb{C}\setminus [0,+\infty).
	\end{equation}
	
	\subsubsection{First transformation: $Y \to \widetilde T$}
	% As in the soft-edge analysis, the first transformation is built from the equilibrium measure associated with the potential $Q$.
	
	Similar to \eqref{defgx}, we define the associated $g$-function
	\begin{equation}\label{def:gQ}
		g_Q(z):=\int_0^\beta \log(z-y)\psi_Q(y)\,\ud y, \qquad z\in \mathbb{C}\setminus (-\infty,\beta],
	\end{equation}
	where the logarithm is taken on its principal branch and $\psi_Q(x) \operatorname{d}\!x$ is the equilibrium measure of energy functional 
   \eqref{def:energy2}. Standard properties of the equilibrium measure imply the following relations.
	
	\begin{proposition}\label{prop:gQ}
		The function $g_Q$ defined in \eqref{def:gQ} is analytic in $\mathbb{C}\setminus (-\infty,\beta]$ and satisfies
		\begin{align}
			g_{Q,+}(x)+g_{Q,-}(x)-Q(x)-\ell_Q &=0, && x\in [0,\beta],\label{gQ-1}\\
			2g_Q(x)-Q(x)-\ell_Q &<0, && x\in (\beta,+\infty),\label{gQ-2}\\
			g_{Q,+}(x)-g_{Q,-}(x) &=2\pi \ii, && x\in (-\infty,0),\label{gQ-3}\\
			g_{Q,+}(x)-g_{Q,-}(x) &=2\pi \ii\int_x^\beta \psi_Q(y)\,\ud y, && x\in [0,\beta].\label{gQ-4}
		\end{align}
	\end{proposition}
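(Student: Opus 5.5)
The plan is to derive the four relations directly from the definition \eqref{def:gQ} and the Euler--Lagrange conditions for the minimizer of \eqref{def:energy2}. The same argument gives Proposition~\ref{prop:gV} in the Gaussian-type case.

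\textbf{Analyticity and \eqref{gQ-3}, \eqref{gQ-4}.} For $z\in\mathbb{C}\setminus(-\infty,\beta]$ and $y\in[0,\beta]$, the point $z-y$ avoids $(-\infty,0]$. Hence $\log(z-y)$ is analytic in $z$, and it is bounded locally uniformly in $y$ away from the cut. Differentiation under the integral then gives analyticity of $g_Q$; the integrability of $\psi_Q$ near $0$, which blows up only like $x^{-1/2}$, is enough here.

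For real $x$, the boundary values satisfy
\[
\log(x-y)_\pm=\log|x-y|\pm \ii\pi\,\chi_{\{y>x\}}.
\]
Integrating against $\psi_Q(y)\,\ud y$ gives
\[
g_{Q,+}(x)-g_{Q,-}(x)=2\pi\ii\int_{\max(x,0)}^{\beta}\psi_Q(y)\,\ud y .
\]
For $x<0$ this equals $2\pi\ii$, since $\mu_Q$ is a probability measure; this is \eqref{gQ-3}. For $x\in[0,\beta]$ it gives \eqref{gQ-4}. Passing to boundary values under the integral is justified by dominated convergence, because $|\log|z-y||$ times $\psi_Q$ is uniformly integrable as $z\to x$.

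\textbf{\eqref{gQ-1} and \eqref{gQ-2}.} Summing the boundary values gives
\[
g_{Q,+}(x)+g_{Q,-}(x)=2\int_0^\beta\log|x-y|\,\ud\mu_Q(y)
\]
for real $x$. Both remaining relations therefore reduce to the variational conditions satisfied by the minimizer of \eqref{def:energy2}. Standard theory of weighted energy problems on the closed set $[0,\infty)$ with continuous admissible $Q$ (Saff--Totik \cite{ST97}, Deift \cite{Deift99}) gives a constant $\ell_Q$ such that
\[
2\int\log|x-y|\,\ud\mu_Q(y)-Q(x)=\ell_Q \quad \text{on } \operatorname{supp}\mu_Q=[0,\beta],
\]
and $\le \ell_Q$ on $[0,\infty)$. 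Equality on all of $[0,\beta]$, including the endpoints, follows from continuity of the logarithmic potential. This continuity holds because $\psi_Q\in L^p$ for some $p>1$, by the form \eqref{def:Q-density}. This yields \eqref{gQ-1}.

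For $x>\beta$ the function $g_Q$ is real and equals $\int\log|x-y|\,\ud\mu_Q(y)$, so the strict inequality \eqref{eq:Q-EL} in Assumption~\ref{def:Q} is exactly \eqref{gQ-2}.

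\textbf{Main obstacle.} The only real difficulty is justifying the Euler--Lagrange equality on the whole closed support, as opposed to quasi-everywhere. I would handle it through the continuity of $U^{\mu_Q}$ just described. Everything else is bookkeeping with the principal branch.
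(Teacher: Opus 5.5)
Your proposal is correct and takes the same route as the paper. The paper gives no separate argument: it only says that the relations follow from standard properties of the equilibrium measure, namely the Euler--Lagrange conditions and the branch structure of $\log(z-y)$. You have written out exactly those standard steps, including the continuity of the logarithmic potential needed to extend the quasi-everywhere equality to all of $[0,\beta]$.
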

	
	For the subsequent RH analysis, we also define
	\begin{equation}\label{def:xiQ}
		\widetilde\xi(z):=-\pi \ii\int_\beta^z \psi_Q(y)\,\ud y, \qquad z\in \mathbb{C}\setminus (-\infty,\beta].
	\end{equation}
	% where the path of integration does not cross $(-\infty,\beta]$. 
	Proposition~\ref{prop:gQ} gives
	\begin{align}
		\widetilde\xi_+(x)-\widetilde\xi_-(x)&=2\pi \ii, &&x\in (-\infty,0),\label{xiQ-1}\\
		2\widetilde\xi_+(x)=-2\widetilde\xi_-(x)&=g_{Q,+}(x)-g_{Q,-}(x), &&x\in (0,\beta),\label{xiQ-2}\\
		2\widetilde\xi(x)&=2g_Q(x)-Q(x)-\ell_Q, &&x\in [\beta,+\infty).\label{xiQ-3}
	\end{align}
	
	Set
	\begin{equation}\label{def:tildeT}
		\widetilde T(z)=e^{-\frac12 n\ell_Q\sigma_3}Y(z)e^{-ng_Q(z)\sigma_3}e^{\frac12 n\ell_Q\sigma_3},
	\end{equation}
	which satisfies the following RH problem.
	
	\begin{rhp}\label{rhpt1}
		\hfill
		\begin{itemize}
			\item[\rm(a)] $\widetilde T$ is analytic in $\mathbb{C}\setminus [0,+\infty)$.
			\item[\rm(b)] For $x\in (0,+\infty)$, we have $\widetilde T_+(x)=\widetilde T_-(x)J_{\widetilde T}(x)$, where
			\begin{equation}\label{def:JThat}
				J_{\widetilde T}(x)=
				\begin{cases}
					\begin{pmatrix}
						e^{-2n\widetilde\xi_+(x)} & x^{\alpha}\\
						0 & e^{-2n\widetilde\xi_-(x)}
					\end{pmatrix}, & x\in (0,\beta),\\[3ex]
					\begin{pmatrix}
						1 & x^{\alpha}e^{2n\widetilde\xi(x)}\\
						0 & 1
					\end{pmatrix}, & x\in (\beta,+\infty).
				\end{cases}
			\end{equation}
			\item[\rm(c)] As $z\to\infty$, $\widetilde T(z)=I+\mathcal{O}(z^{-1})$.
			% \item[\rm(d)] As $z\to 0$ with $z\in \mathbb{C}\setminus [0,+\infty)$, $\widetilde T$ satisfies the same local condition as in \eqref{hard-origin-Y}.
		\end{itemize}
	\end{rhp}
	
	\subsubsection{Second transformation: $\widetilde T \mapsto \widetilde S$}\label{rhp.S1}
    \begin{figure}[htbp]
	\centering
	\begin{tikzpicture}[x=1.2pt,y=1.2pt,yscale=-1,xscale=1, thin]
		\draw (130,81.25) -- (358.75,80.95);
		
		\draw (130,81.25) .. controls (168.75,41.575) and (242.5,41.575) .. (282.5,80.325);
		\draw (130,81.25) .. controls (177.5,120.95) and (243.125,120.95) .. (282.5,80.325);
		
		\draw [fill={rgb, 255:red, 0; green, 0; blue, 0 } ,fill opacity=1 ] (208.75,51.625) -- (203.52,54.34) -- (203.65,48.69) -- cycle ;
		\draw [fill={rgb, 255:red, 0; green, 0; blue, 0 } ,fill opacity=1 ] (208.75,110.625) -- (203.52,113.34) -- (203.65,107.69) -- cycle ;
		\draw [fill={rgb, 255:red, 0; green, 0; blue, 0 } ,fill opacity=1 ] (208.125,81.25) -- (202.895,83.965) -- (203.025,78.315) -- cycle ;
		\draw [fill={rgb, 255:red, 0; green, 0; blue, 0 } ,fill opacity=1 ] (320.625,81.25) -- (315.395,83.965) -- (315.525,78.315) -- cycle ;
		
		\draw (200,35) node [anchor=north west][inner sep=0.75pt] [align=left] {$\partial \Omega_{\widetilde S,+}$};
		\draw (200,116) node [anchor=north west][inner sep=0.75pt] [align=left] {$\partial \Omega_{\widetilde S,-}$};
		
		\draw (195,58) node [anchor=north west][inner sep=0.75pt] [align=left] {$\Omega_{\widetilde S,+}$};
		\draw (195,88) node [anchor=north west][inner sep=0.75pt] [align=left] {$\Omega_{\widetilde S,-}$};
		
		\draw (121.89,83.61) node [anchor=north west][inner sep=0.75pt] [align=left] {$0$};
		\draw (281.89,85.48) node [anchor=north west][inner sep=0.75pt] [align=left] {$\beta$};
	\end{tikzpicture}
	\caption{The jump contours of the RH problem for $\widetilde S$.}
	\label{figure-Bessel-lens}
\end{figure}
	On the interval $(0,\beta)$, the jump matrix admits the factorization
	\begin{align}\label{jump2}
		\begin{pmatrix}
			e^{-2n\widetilde\xi_{+}(x)} & x^\alpha \\
			0 & e^{-2n\widetilde\xi_{-}(x)}
		\end{pmatrix}
		=
		\begin{pmatrix}
			1 & 0 \\
			x^{-\alpha}e^{-2n\widetilde\xi_{-}(x)}  & 1
		\end{pmatrix}
		\begin{pmatrix}
			0 & x^\alpha \\
			-x^{-\alpha} & 0
		\end{pmatrix}
		\begin{pmatrix}
			1 & 0 \\
			x^{-\alpha}e^{-2n\widetilde\xi_{+}(x)} & 1
		\end{pmatrix}.
	\end{align}
	Open a lens around $(0,\beta)$. Let $\Omega_{\widetilde S,+}$ and $\Omega_{\widetilde S,-}$ denote the upper and lower lens-shaped regions bounded by $(0,\beta)$ and the contours $\Sigma_1$ and $\Sigma_3$, respectively, and set
	\begin{equation}\label{def:tildeS}
		\widetilde S(z)=\widetilde T(z)
		\begin{cases}
			\begin{pmatrix}
				1 & 0\\
				-z^{-\alpha}e^{-2n\widetilde\xi(z)} & 1
			\end{pmatrix}, & z\in \Omega_{\widetilde S,+},\\[3ex]
			\begin{pmatrix}
				1 & 0\\
				z^{-\alpha}e^{-2n\widetilde\xi(z)} & 1
			\end{pmatrix}, & z\in \Omega_{\widetilde S,-},\\[3ex]
			I, & \text{elsewhere}.
		\end{cases}
	\end{equation}
	The matrix $\widetilde S$ solves the following RH problem.
	
	\begin{rhp}\label{rhpt2}
		\hfill
		\begin{itemize}
			\item[\rm(a)] $\widetilde S$ is analytic in $\mathbb{C}\setminus \Gamma_{\widetilde S}$, where
			\begin{equation*}
				\Gamma_{\widetilde S}=[0,+\infty)\cup \Sigma_1\cup \Sigma_3;
			\end{equation*}
			see Figure \ref{figure-Bessel-lens} below.
			\item[\rm(b)] For $z\in \Gamma_{\widetilde S}$, we have $\widetilde S_+(z)=\widetilde S_-(z)J_{\widetilde S}(z)$ with
			\begin{equation}\label{def:JShat}
				J_{\widetilde S}(z)=
				\begin{cases}
					\begin{pmatrix}
						1 & 0\\
						z^{-\alpha}e^{-2n\widetilde\xi(z)} & 1
					\end{pmatrix}, & z\in \Sigma_1\cup \Sigma_3,\\[3ex]
					\begin{pmatrix}
						1 & z^{\alpha}e^{2n\widetilde\xi(z)}\\
						0 & 1
					\end{pmatrix}, & z\in (\beta,+\infty),\\[3ex]
					\begin{pmatrix}
						0 & z^{\alpha}\\
						-z^{-\alpha} & 0
					\end{pmatrix}, & z\in (0,\beta).
				\end{cases}
			\end{equation}
			\item[\rm(c)] As $z\to\infty$, $\widetilde S(z)=I+\mathcal{O}(z^{-1})$.
		\end{itemize}
	\end{rhp}

	\subsubsection{Global parametrix}
	By Proposition~\ref{prop:gQ}, all jump matrices of $\widetilde S$ outside $(0,\beta)$ converge exponentially fast to the identity matrix as $n\to\infty$. The global parametrix $\widetilde N$ retains only the jump on $(0,\beta)$ and is characterized by the following RH problem.
	
	\begin{rhp}\label{rhpforn}
		\hfill
		\begin{itemize}
			\item[\rm(a)] $\widetilde N$ is analytic in $\mathbb{C}\setminus [0,\beta]$.
			\item[\rm(b)] For $x\in (0,\beta)$,
			\begin{equation*}
				\widetilde N_+(x)=\widetilde N_-(x)
				\begin{pmatrix}
					0 & x^{\alpha}\\
					-x^{-\alpha} & 0
				\end{pmatrix}.
			\end{equation*}
			\item[\rm(c)] As $z\to\infty$, $\widetilde N(z)=I+\mathcal{O}(z^{-1})$.
		\end{itemize}
	\end{rhp}
	Its explicit solution is
	\begin{align}\label{defn}
		\widetilde N(z)
		&=\left(\frac{\beta}{4}\right)^{\frac{\alpha}{2}\sigma_3}
		\begin{pmatrix}
			\dfrac12\bigl(\wtil\gamma(z)+\wtil\gamma(z)^{-1}\bigr) & \dfrac{1}{2\ii}\bigl(\wtil\gamma(z)-\wtil\gamma(z)^{-1}\bigr)\\[1ex]
			-\dfrac{1}{2\ii}\bigl(\wtil\gamma(z)-\wtil\gamma(z)^{-1}\bigr) & \dfrac12\bigl(\wtil\gamma(z)+\wtil\gamma(z)^{-1}\bigr)
		\end{pmatrix}D(z)^{-\sigma_3},
	\end{align}
	where
	\begin{equation*}
		\wtil\gamma(z)=\left(\frac{z-\beta}{z}\right)^{1/4},
		\qquad
		D(z)=\left(\frac{z}{\varphi^*(z)}\right)^{\alpha/2},
		\qquad
		\varphi^*(z)=\frac{2}{\beta}\left(z-\frac{\beta}{2}+z^{1/2}(z-\beta)^{1/2}\right),
	\end{equation*}
	where each fractional power is taken on its principal branch.
	
	\subsubsection{Local parametrix near $z=0$}
	Let $U_\varepsilon(0)$ be a sufficiently small neighborhood of the origin. We seek a matrix $\widetilde P^{(0)}$ that satisfies the same jumps as $\widetilde S$ inside $U_\varepsilon(0)$ and matches $\widetilde N$ on the boundary.
	
	\begin{rhp}\label{rhpptilde}
		\hfill
		\begin{itemize}
			\item[\rm(a)] $\widetilde P^{(0)}$ is analytic in $U_\varepsilon(0)\setminus \Gamma_{\widetilde S}$.
			\item[\rm(b)] For $z\in U_\varepsilon(0)\cap \Gamma_{\widetilde S}$, we have
			\begin{equation*}
				\widetilde P^{(0)}_+(z)=\widetilde P^{(0)}_-(z)J_{\widetilde S}(z),
			\end{equation*}
			where $J_{\widetilde S}$ is defined in \eqref{def:JShat}.
			\item[\rm(c)] As $n\to\infty$,
			\begin{equation}\label{match-P0}
				\widetilde P^{(0)}(z)=\left(I+\mathcal{O}(n^{-1})\right)\widetilde N(z),
				\qquad z\in \partial U_\varepsilon(0).
			\end{equation}
		\end{itemize}
	\end{rhp}
	The construction uses the Bessel parametrix $\Phi_{\alpha}^{(\Bes)}$ from Appendix B. Define the conformal map $\widetilde f_n$ by
	\begin{equation}\label{deffn}
		e^{2\widetilde f_n(z)^{1/2}}=(-1)^n e^{n\widetilde\xi(z)},
	\end{equation}
	which yields the explicit expression
	\begin{equation}\label{deffn-explicit}
		\widetilde f_n(z)=-\frac14 n^2\left(\frac{\widetilde\xi(z)-\widetilde\xi_+(0)}{\ii}\right)^2.
	\end{equation}
	It is convenient to set
	\begin{equation}\label{deff-hard}
		\widetilde f(z):=n^{-2}\widetilde f_n(z),
	\end{equation}
	so that $\widetilde f$ is independent of $n$. The local parametrix at the hard edge is
	\begin{equation}\label{def:P0hard}
		\widetilde P^{(0)}(z)=\widetilde E_n(z)\Phi_{\alpha}^{(\Bes)}\bigl(\widetilde f_n(z)\bigr)e^{-n\widetilde\xi(z)\sigma_3}(-z)^{-\frac{\alpha}{2}\sigma_3},
	\end{equation}
	where the analytic prefactor is given by
	\begin{equation}\label{E2}
		\widetilde E_n(z)=(-1)^n\widetilde N(z)(-z)^{\frac{\alpha}{2}\sigma_3}\frac{1}{\sqrt2}
		\begin{pmatrix}
			1 & \ii\\
			\ii & 1
		\end{pmatrix}
		\widetilde f_n(z)^{\frac14\sigma_3}(2\pi)^{\frac12\sigma_3}.
	\end{equation}
	The hard-edge matching expansion is
	\begin{equation}\label{P0hard-asy}
		\widetilde P^{(0)}(z)\widetilde N(z)^{-1}\sim I+\sum_{k=1}^{\infty}\frac{1}{n^k}\widetilde\Delta_k^{(0)}(z),
		\qquad n\to\infty,
	\end{equation}
	uniformly for $z\in \partial U_\varepsilon(0)$, where each
	$\widetilde\Delta_k^{(0)}$ is meromorphic in $U_\varepsilon(0)$ with a pole only
	at $z=0$. 
    
    Let
	\begin{equation}\label{def:varphi-hard}
		\widetilde\varphi(z):=\frac12\sqrt{\frac{z-\beta}{z}}\,h_Q(z), \qquad z\in \mathbb{C}\setminus [0,\beta].
	\end{equation}
	Then $\widetilde\xi(z)=-\int_\beta^z \widetilde\varphi(s)\,\ud s$, so that
	\begin{equation*}
		\widetilde\xi(z)-\widetilde\xi_+(0)=-\int_0^z \widetilde\varphi(s)\,\ud s,
		\qquad
		\widetilde f_n(z)=\left(\frac n2\int_0^z \widetilde\varphi(s)\,\ud s\right)^2.
	\end{equation*}
	From \cite[Section~3]{Vanlessen07} and \cite[Section~3]{ZCD14}, we obtain
	\begin{equation}\label{Delta0-explicit}
		\widetilde\Delta_k^{(0)}(z)
		=
		\frac{(\alpha,k-1)}{2^k\bigl(\widetilde\xi(z)-\widetilde\xi_+(0)\bigr)^k}
		\widetilde N(z)(-z)^{\frac{\alpha}{2}\sigma_3}\widetilde C_k(-z)^{-\frac{\alpha}{2}\sigma_3}\widetilde N(z)^{-1},
	\end{equation}
	with
	\begin{equation*}
		(\alpha,m):=\frac{\prod_{j=1}^{m}\bigl(4\alpha^2-(2j-1)^2\bigr)}{2^{2m}m!},
		\qquad m\ge 1,
		\qquad
		(\alpha,0):=1,
	\end{equation*}
	where
	\begin{equation*}
		\widetilde C_k:=
		\begin{pmatrix}
			\dfrac{(-1)^k}{k}\left(\alpha^2+\dfrac{k}{2}-\dfrac14\right) & \left(k-\dfrac12\right)\ii\\[2ex]
			(-1)^{k+1}\left(k-\dfrac12\right)\ii & \dfrac{1}{k}\left(\alpha^2+\dfrac{k}{2}-\dfrac14\right)
		\end{pmatrix}.
	\end{equation*}
	In particular,
	\begin{equation}\label{Delta10-explicit}
		\widetilde\Delta_1^{(0)}(z)
		=
		\frac{1}{2\bigl(\widetilde\xi(z)-\widetilde\xi_+(0)\bigr)}
		\widetilde N(z)(-z)^{\frac{\alpha}{2}\sigma_3}
		\begin{pmatrix}
			-\alpha^2-\dfrac14 & \dfrac{\ii}{2}\\[2ex]
			\dfrac{\ii}{2} & \alpha^2+\dfrac14
		\end{pmatrix}
		(-z)^{-\frac{\alpha}{2}\sigma_3}\widetilde N(z)^{-1}.
	\end{equation}
	
	\subsubsection{Local parametrix near $z=\beta$}
	In a neighborhood $U_\varepsilon(\beta)$ of the right endpoint $\beta$, the local paramertix reads as follows. 
    
    % use the Airy parametrix from the soft-edge analysis and denote it by $\widetilde P^{(\beta)}$. It is characterized by the following RH problem.
	
	\begin{rhp}\label{pn}
		\hfill
		\begin{itemize}
			\item[\rm(a)] $\widetilde P^{(\beta)}$ is analytic in $U_\varepsilon(\beta)\setminus \Gamma_{\widetilde S}$.
			\item[\rm(b)] For $z\in U_\varepsilon(\beta)\cap \Gamma_{\widetilde S}$,
			\begin{equation*}
				\widetilde P^{(\beta)}_+(z)=\widetilde P^{(\beta)}_-(z)J_{\widetilde S}(z).
			\end{equation*}
			\item[\rm(c)] As $n\to\infty$,
			\begin{equation*}
				\widetilde P^{(\beta)}(z)=\left(I+\mathcal{O}(n^{-1})\right)\widetilde N(z),
				\qquad z\in \partial U_\varepsilon(\beta).
			\end{equation*}
		\end{itemize}
	\end{rhp}
	Similar to the construction of $P^{(b)}$ in \eqref{def:Pb}, one can solve the above RH problem with the aid of the Airy parametrix. We omit the details but note that as $n\to infty$, 
	\begin{equation}\label{PN-111}
		\widetilde P^{(\beta)}(z)\widetilde N(z)^{-1}\sim I+\sum_{k=1}^{\infty}\frac{1}{n^k}\widetilde\Delta_k^{(\beta)}(z),
		\qquad z\in \partial U_\varepsilon(\beta),
	\end{equation}
    for some functions $\Delta_k^{(\beta)}$.
	
	\subsubsection{Final transformation}\label{rhp.S2}
	Set
	\begin{equation}\label{def:tildeR}
		\widetilde R(z)=
		\begin{cases}
			\widetilde S(z)\widetilde P^{(\beta)}(z)^{-1}, & z\in U_\varepsilon(\beta),\\
			\widetilde S(z)\widetilde P^{(0)}(z)^{-1}, & z\in U_\varepsilon(0),\\
			\widetilde S(z)\widetilde N(z)^{-1}, & \text{elsewhere}.
		\end{cases}
	\end{equation}
	The matrix $\widetilde R$ satisfies the following RH problem.
\begin{figure}[htbp]
	\centering
	\begin{tikzpicture}[x=1.2pt,y=1.2pt,yscale=-1,xscale=1, thin]
		
		\draw (190,145) .. controls (190,131.19) and (201.19,120) .. (215,120) .. controls (228.81,120) and (240,131.19) .. (240,145) .. controls (240,158.81) and (228.81,170) .. (215,170) .. controls (201.19,170) and (190,158.81) .. (190,145) -- cycle ;
		\draw [fill={rgb, 255:red, 0; green, 0; blue, 0 } ,fill opacity=1 ] (217.5,120) -- (212.5,122.87) -- (212.5,117.13) -- cycle ;
		\draw [fill={rgb, 255:red, 0; green, 0; blue, 0 } ,fill opacity=1 ] (214,145) .. controls (214,144.45) and (214.45,144) .. (215,144) .. controls (215.55,144) and (216,144.45) .. (216,145) .. controls (216,145.55) and (215.55,146) .. (215,146) .. controls (214.45,146) and (214,145.55) .. (214,145) -- cycle ;
		
		\draw (300,145) .. controls (300,131.19) and (311.19,120) .. (325,120) .. controls (338.81,120) and (350,131.19) .. (350,145) .. controls (350,158.81) and (338.81,170) .. (325,170) .. controls (311.19,170) and (300,158.81) .. (300,145) -- cycle ;
		\draw [fill={rgb, 255:red, 0; green, 0; blue, 0 } ,fill opacity=1 ] (328,120) -- (323,122.87) -- (323,117.13) -- cycle ;
		\draw [fill={rgb, 255:red, 0; green, 0; blue, 0 } ,fill opacity=1 ] (324,144) .. controls (324,143.45) and (324.45,143) .. (325,143) .. controls (325.55,143) and (326,143.45) .. (326,144) .. controls (326,144.55) and (325.55,145) .. (325,145) .. controls (324.45,145) and (324,144.55) .. (324,144) -- cycle ;
		
		\draw (350,145) -- (430,145) ;
		\draw [fill={rgb, 255:red, 0; green, 0; blue, 0 } ,fill opacity=1 ] (395,145) -- (389.39,147.87) -- (389.39,142.13) -- cycle ;
		
		\draw (227.81,123.69) .. controls (234.13,115.74) and (250.41,110.08) .. (269.5,110.08) .. controls (288.59,110.08) and (304.87,115.74) .. (311.19,123.69) ;
		\draw [fill={rgb, 255:red, 0; green, 0; blue, 0 } ,fill opacity=1 ] (271.77,110.08) -- (265.41,112.95) -- (265.41,107.21) -- cycle ;
		
		\draw (312.19,167.52) .. controls (305.87,175.48) and (289.59,181.13) .. (270.5,181.13) .. controls (251.41,181.13) and (235.13,175.48) .. (228.81,167.52) ;
		\draw [fill={rgb, 255:red, 0; green, 0; blue, 0 } ,fill opacity=1 ] (274.59,181.13) -- (268.23,184) -- (268.23,178.27) -- cycle ;
		
		\draw (210,150) node [anchor=north west][inner sep=0.75pt] [align=left] {$0$};
		\draw (321,150) node [anchor=north west][inner sep=0.75pt] [align=left] {$\beta$};
		
		\draw (260,90) node [anchor=north west][inner sep=0.75pt] [align=left] {$\partial \Omega_{\widetilde S,+}$};
		\draw (260,190) node [anchor=north west][inner sep=0.75pt] [align=left] {$\partial \Omega_{\widetilde S,-}$};
		\draw (180,110) node [anchor=north west][inner sep=0.75pt] [align=left] {$\partial U_\varepsilon(0)$};
		\draw (330,110) node [anchor=north west][inner sep=0.75pt] [align=left] {$\partial U_\varepsilon(\beta)$};
	\end{tikzpicture}
	\caption{The jump contours $\Gamma_{\widetilde R}$ of the RH problem \ref{rhp57} for $\widetilde R$.}
	\label{fig:contour_R11}
\end{figure}
	\begin{rhp}\label{rhp57}
		\hfill
		\begin{itemize}
			\item[\rm(a)] $\widetilde R$ is analytic in $\mathbb{C}\setminus \Gamma_{\widetilde R}$, 
            see Figure \ref{fig:contour_R11} for an illustration of $\Gamma_{\widetilde R}$.
   %          where
			% \begin{equation*}
			% 	\Gamma_{\widetilde R}=
			% 	\left(\Gamma_{\widetilde S}\setminus \left(U_\varepsilon(0)\cup U_\varepsilon(\beta)\cup (0,+\infty)\right)\right)
			% 	\cup \partial U_\varepsilon(0)\cup \partial U_\varepsilon(\beta).
			% \end{equation*}
			\item[\rm(b)] For $z\in \Gamma_{	\widetilde R}$, we have $\widetilde R_+(z)=\widetilde R_-(z)J_{\widetilde R}(z)$, where
			\begin{equation}\label{def:JRhat}
				J_{\widetilde R}(z)=
				\begin{cases}
					\widetilde P^{(\beta)}(z)\widetilde N(z)^{-1}, & z\in \partial U_\varepsilon(\beta),\\
					\widetilde P^{(0)}(z)\widetilde N(z)^{-1}, & z\in \partial U_\varepsilon(0),\\
					\widetilde N(z)J_{\widetilde S}(z)\widetilde N(z)^{-1}, & \text{elsewhere}.
				\end{cases}
			\end{equation}
			\item[\rm(c)] As $z\to\infty$, $\widetilde R(z)=I+\mathcal{O}(z^{-1})$.
		\end{itemize}
	\end{rhp}
	On the circles $\partial U_\varepsilon(0)$ and $\partial U_\varepsilon(\beta)$, the jump matrix admits a full asymptotic expansion in inverse powers of $n$, whereas on the remaining contours it converges exponentially fast to the identity matrix. The small-norm RH analysis gives
	\begin{equation}\label{R1}
		\widetilde R(z)\sim I+\sum_{k=1}^{\infty}\frac{1}{n^k}\widetilde R_k(z),
		\qquad n\to\infty.
	\end{equation}
	The first coefficient is characterized by an additive RH problem with jumps $\widetilde\Delta_1^{(0)}$ and $\widetilde\Delta_1^{(\beta)}$, and similarly for all higher coefficients. In particular,
	\begin{equation}\label{Rk}
		\widetilde R_k(z)=\frac{1}{2\pi\ii}\oint_{\partial U_\varepsilon(\beta)}\frac{\sum_{\ell=1}^k \widetilde R_{k-\ell,-}(s)\widetilde\Delta_{\ell}^{(\beta)}(s)}{s-z}\,\ud s
		+\frac{1}{2\pi\ii}\oint_{\partial U_\varepsilon(0)}\frac{\sum_{\ell=1}^k \widetilde R_{k-\ell,-}(s)\widetilde\Delta_{\ell}^{(0)}(s)}{s-z}\,\ud s.
	\end{equation}
	\begin{corollary}\label{cor:hard-R-structure}
		For each $m\ge 1$, one has
		\begin{equation}\label{r2m1}
			\left(\frac{\beta}{4}\right)^{-\frac{\alpha}{2}\sigma_3}\widetilde R_m(z)\left(\frac{\beta}{4}\right)^{\frac{\alpha}{2}\sigma_3}
			=\widetilde\alpha_{0,m}(z)I+\widetilde\alpha_{1,m}(z)\sigma_1+\widetilde\alpha_{2,m}(z)\sigma_2+\widetilde\alpha_{3,m}(z)\sigma_3.
		\end{equation}
	\end{corollary}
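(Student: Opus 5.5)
The statement \eqref{r2m1} is at heart a linear-algebra fact, so I will prove it directly and then record the analytic properties of the coefficients that the later sections need. The matrices $I,\sigma_1,\sigma_2,\sigma_3$ form a basis of the $2\times 2$ complex matrices. They are orthogonal for the trace pairing: $\tr(\sigma_j\sigma_k)=2\delta_{jk}$ and $\tr\sigma_j=0$. Writing
\[
M_m(z):=\left(\frac{\beta}{4}\right)^{-\frac{\alpha}{2}\sigma_3}\widetilde R_m(z)\left(\frac{\beta}{4}\right)^{\frac{\alpha}{2}\sigma_3},
\]
I therefore define
\[
\widetilde\alpha_{0,m}(z):=\tfrac12\tr M_m(z),\qquad \widetilde\alpha_{j,m}(z):=\tfrac12\tr\bigl(\sigma_jM_m(z)\bigr),\quad j=1,2,3.
\]
The decomposition \eqref{r2m1} then holds identically in $z$, by expanding $M_m$ in this orthogonal basis.

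The substantive part is to show that the $\widetilde\alpha_{j,m}$ are well-behaved scalar functions, which is what will be used later. I would argue by induction on $m$ using \eqref{Rk}, the existence of the expansion \eqref{R1} being taken from the small-norm theory.

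First, each $\widetilde R_m$ is analytic in $\mathbb C\setminus(\partial U_\varepsilon(0)\cup\partial U_\varepsilon(\beta))$ and is $\mathcal O(1/z)$ at infinity, since it is a Cauchy integral of a continuous density on the two circles. Because conjugation by the constant diagonal matrix $(\beta/4)^{\frac{\alpha}{2}\sigma_3}$ and taking traces are linear operations, the same properties pass to each $\widetilde\alpha_{j,m}$. Inside $U_\varepsilon(0)$, the residue calculus applied to \eqref{Rk} shows that $\widetilde R_m$ is given by an explicit expression in the Laurent coefficients at $z=0$ of $\widetilde\Delta^{(0)}_\ell$, meromorphic with a pole only at $0$ as stated after \eqref{P0hard-asy}, together with the analogous coefficients at $\beta$. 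This yields explicit, analytic scalar coefficients near the hard edge.

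The one point needing care, and the only possible obstacle, is branch bookkeeping. I must check that the conjugation by $(\beta/4)^{\frac{\alpha}{2}\sigma_3}$, which is the constant prefactor of $\widetilde N$ in \eqref{defn}, removes the only non-symmetric constant. After that conjugation, the factors $(-z)^{\pm\frac{\alpha}{2}\sigma_3}$ and $D(z)^{\pm\sigma_3}$ in \eqref{Delta0-explicit} combine into single-valued meromorphic functions near $0$. I would verify this from the jump of $\widetilde N$ on $(0,\beta)$, which shows that $\widetilde N(z)(-z)^{\frac{\alpha}{2}\sigma_3}$ has no jump near $0$ apart from an isolated singularity. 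This ensures that the coefficients $\widetilde\alpha_{j,m}$ extracted by traces are genuinely meromorphic, with no branch cuts, inside $U_\varepsilon(0)$.
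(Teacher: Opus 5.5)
Your first paragraph is exactly the paper's proof: \eqref{r2m1} is just the expansion of a $2\times 2$ matrix in the basis $\{I,\sigma_1,\sigma_2,\sigma_3\}$, so it holds for every $z$ and needs no analytic input about $\widetilde R_m$. The rest of your proposal is not needed for this statement, and one claim in it is false, so you should drop that material: $\widetilde N(z)(-z)^{\frac{\alpha}{2}\sigma_3}$ still has the jump $\begin{pmatrix}0&1\\-1&0\end{pmatrix}$ on $(0,\varepsilon)$, and the meromorphy of $\widetilde\Delta_k^{(0)}$ instead comes from the identity $\begin{pmatrix}0&1\\-1&0\end{pmatrix}\widetilde C_k\begin{pmatrix}0&1\\-1&0\end{pmatrix}^{-1}=(-1)^k\widetilde C_k$, which compensates the sign change of $(\widetilde\xi(z)-\widetilde\xi_+(0))^k$ across the cut.
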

	\begin{proof}
		This is the Pauli-basis decomposition of a $2\times2$ matrix.
	\end{proof}
	
	\section{Proofs of main results}\label{sec:proof}
	\subsection{Proof of Theorem \ref{softthm}}
		To analyze the rescaled kernel $\widehat K_n$ defined in \eqref{eq:softexpan} near the right endpoint $b$, set
		\begin{equation}\label{def:xn}
			x_n:= b+\frac{x}{\kappa_V n^{\frac{2}{3}}},
			\quad
			y_n:= b+\frac{y}{\kappa_V n^{\frac{2}{3}}}
		\end{equation}
		and, after shrinking $\varepsilon$ if necessary, assume that $D\!\left(b,\frac{3}{2}\varepsilon\right)$ is an admissible Airy neighborhood. Split the scaled half-line into
		\[
			I_n := [t_0,\varepsilon\kappa_V n^{2/3}), \qquad II_n := [\varepsilon\kappa_V n^{2/3},\infty),
		\]
		so that for all large $n$, $x\in I_n$ implies $x_n\in D(b,\varepsilon)$, while the bound $x<\frac32\varepsilon\kappa_V n^{2/3}$ implies $x_n\in D\!\left(b,\frac32\varepsilon\right)$. The local estimate is first obtained on
		\[
			(x,y)\in I_n\times I_n.
		\]
		For $(x,y)\in I_n\times I_n$ with $x,y<0$, tracing back the transformations $Y \to \what{T} \to \what S \to \what R$ in \eqref{def:hatT}, \eqref{def:hatS}, and \eqref{def:hatR} gives
		\begin{align}
			\frac{1}{\kappa_V n^{\frac{2}{3}}} K_n(x_n, y_n) 
			&= \frac{1}{2\pi \mathrm{i}(x-y)} \begin{pmatrix} 0 & e^{-n\varphi_{+}(y_n)} \end{pmatrix} \what T_+(y_n)^{-1} \what T_+(x_n) \begin{pmatrix} e^{-n\varphi_{+}(x_n)} \\ 0 \end{pmatrix} \nonumber\\
			&= \frac{1}{2\pi \mathrm{i}(x-y)} \begin{pmatrix} -e^{n\varphi_{+}(y_n)} & e^{-n\varphi_{+}(y_n)} \end{pmatrix} \what S_+(y_n)^{-1} \what S_+(x_n) \begin{pmatrix} e^{-n\varphi_{+}(x_n)} \\ e^{n\varphi_{+}(x_n)} \end{pmatrix} \nonumber\\
			&= \frac{1}{2\pi \mathrm{i}(x - y)}\begin{pmatrix} -1 & 1 \end{pmatrix} \Phi_+^{(\mathrm{Ai})}(n^{\frac{2}{3}}\what f(y_n))^{-1} \what E_n^{(b)}(y_n)^{-1} \what R(y_n)^{-1} \nonumber \\
			&\quad \times \what R(x_n) \what E_n^{(b)}(x_n) \Phi_+^{(\mathrm{Ai})}(n^{\frac{2}{3}}\what f(x_n)) \begin{pmatrix} 1 \\ 1 \end{pmatrix} \nonumber \\
			&= \frac{1}{\mathrm{i}(x - y)} \begin{pmatrix} \mathrm{i}\mathrm{Ai}'(n^{\frac{2}{3}}\what f(y_n)) &  \mathrm{Ai}(n^{\frac{2}{3}}\what f(y_n)) \end{pmatrix} \what E_n^{(b)}(y_n)^{-1} \what R(y_n)^{-1} \nonumber \\
			&\qquad \times \what R(x_n) \what E_n^{(b)}(x_n) \begin{pmatrix} \mathrm{Ai}(n^{\frac{2}{3}}\what f(x_n)) \\ -\mathrm{i}\mathrm{Ai}'(n^{\frac{2}{3}}\what f(x_n)) \end{pmatrix}. \label{eq:soft-local-kernel-representation}
		\end{align}
		The cases where $x > 0$ and/or $y > 0$ are handled in the same way; hence the final expression in \eqref{eq:soft-local-kernel-representation} holds throughout $I_n\times I_n$.

		Consider the factor $\what E_n^{(b)}(y_n)^{-1} \what R(y_n)^{-1} \what R(x_n) \what E_n^{(b)}(x_n)$. The representation \eqref{def:E} of $\what E_n^{(b)}$ gives
		\begin{align}\label{ERRE}
			&\what E_n^{(b)}(y_n)^{-1} \what R(y_n)^{-1} \what R(x_n) \what E_n^{(b)}(x_n) 
            \nonumber \\
			&= \what E_n^{(b)}(y_n)^{-1} \frac{1}{\sqrt 2} \begin{pmatrix} 1 & -\mathrm{i}\\ -\mathrm{i} & 1 \end{pmatrix} \frac{1}{\sqrt 2} \begin{pmatrix} 1 & \mathrm{i}\\ \mathrm{i} & 1 \end{pmatrix} \what R(y_n)^{-1} \frac{1}{\sqrt 2} \begin{pmatrix} 1 & -\mathrm{i}\\ -\mathrm{i} & 1 \end{pmatrix} \nonumber \\
			&\quad \times \frac{1}{\sqrt 2} \begin{pmatrix} 1 & \mathrm{i}\\ \mathrm{i} & 1 \end{pmatrix} \what R(x_n) \frac{1}{\sqrt 2} \begin{pmatrix} 1 & -\mathrm{i}\\ -\mathrm{i} & 1 \end{pmatrix} \frac{1}{\sqrt 2} \begin{pmatrix} 1 & \mathrm{i}\\ \mathrm{i} & 1 \end{pmatrix} \what E_n^{(b)}(x_n) \nonumber \\
			&= \left((y_n-a)\frac{\what f(y_n)}{y_n-b}\right)^{-\frac{\sigma_3}{4}} \left(n^{-\frac{\sigma_3}{6}}\frac{1}{\sqrt 2} \begin{pmatrix} 1 & \mathrm{i}\\ \mathrm{i} & 1 \end{pmatrix} \what R(y_n) \frac{1}{\sqrt 2} \begin{pmatrix} 1 & -\mathrm{i}\\ -\mathrm{i} & 1 \end{pmatrix}n^{\frac{\sigma_3}{6}}\right)^{-1} \nonumber\\
			&\quad  \times \left(n^{-\frac{\sigma_3}{6}}\frac{1}{\sqrt 2} \begin{pmatrix} 1 & \mathrm{i}\\ \mathrm{i} & 1 \end{pmatrix} \what R(x_n) \frac{1}{\sqrt 2} \begin{pmatrix} 1 & -\mathrm{i}\\ -\mathrm{i} & 1 \end{pmatrix}n^{\frac{\sigma_3}{6}}\right) \left((x_n-a)\frac{\what f(x_n)}{x_n-b}\right)^{\frac{\sigma_3}{4}}.
		\end{align}
Equations \eqref{asy:R} and \eqref{def:Rk} give the expansion of the conjugated term involving $\what R(x_n)$:
		\begin{multline}\label{asy:R_expanded}
			n^{-\frac{\sigma_3}{6}}\frac{1}{\sqrt 2} \begin{pmatrix} 1 & \mathrm{i}\\ \mathrm{i} & 1 \end{pmatrix} \what R(x_n) \frac{1}{\sqrt 2} \begin{pmatrix} 1 & -\mathrm{i}\\ -\mathrm{i} & 1 \end{pmatrix}n^{\frac{\sigma_3}{6}}
            \\
            \sim  I+2\sum_{k=1}^{\infty} \begin{pmatrix}
				\what \alpha_{2,2k}(x_n) n^{-2k}& -\mathrm{i} \what \alpha_{2,2k-1}(x_n) n^{-2k+\tfrac{2}{3}}\\
				\mathrm{i} \what \alpha_{1,2k-1}(x_n) n^{-2k+\tfrac{4}{3}} & \what \alpha_{1,2k}(x_n) n^{-2k}
			\end{pmatrix}.
		\end{multline}
		From the definitions of $x_n$ and $y_n$ in \eqref{def:xn}, if $x = y$, then 
		\begin{equation*}
			\what E_n^{(b)}(y_n)^{-1} \what R(y_n)^{-1} \what R(x_n) \what E_n^{(b)}(x_n)=I.
		\end{equation*}
		 Combining this with \eqref{ERRE} and \eqref{asy:R_expanded}, we conclude that, as $n\to \infty$,
		\begin{equation}\label{eq:ERRE_expansion}
			\what E_n^{(b)}(y_n)^{-1} \what R(y_n)^{-1} \what R(x_n) \what E_n^{(b)}(x_n)\sim I+(x-y)\sum_{j=1}^{\infty} \what e_j(x,y) n^{-\frac{2j}{3}},
		\end{equation}
		where $\what e_j(x, y)$ are matrices whose entries are polynomials in $x$ and $y$.

		For the Airy-function factors, the expansion of $\what f$ near $z = b$ in \eqref{def:f} gives
		\begin{equation} \label{eq111}
			n^{\frac{2}{3}}\what f(x_n)=x+\sum_{j = 1}^{\infty} \what p_{1,j}(x)n^{-\frac{2j}{3}}, \quad n\to \infty,
		\end{equation}
		where $\what p_{1,j}(x)$ is a polynomial of degree $j+1$.
		Expanding higher powers of $n^{\frac{2}{3}}\what f(x_n)-x$ yields
		\begin{equation} \label{eq333}
			\frac{1}{m!} \left(n^{\frac{2}{3}}\what f(x_n)-x\right) ^m=\sum_{j = m}^{\infty} \what p_{m,j}(x)n^{-\frac{2j}{3}} , \quad n \to \infty,
		\end{equation}
		where, for any integer $r$ with $1\le r\le m-1$,
		\begin{equation} \label{pmj}
			\what p_{m,j}(x)=\frac{(m-r)!r!}{m!}\sum_{k=m-r}^{j-r}\what p_{m-r,k}(x)\what p_{r,j-k}(x),\qquad m \leqslant j.   
		\end{equation}
		Inserting \eqref{eq111} and \eqref{eq333} into the Taylor series of the Airy function and its derivative, and using \eqref{Airy_function} to eliminate higher derivatives, we obtain
		\begin{align}\label{eq:Ai_expansion}
			\mathrm{Ai}\left(n^{\frac{2}{3}}\what f(x_n)\right) 
			&= \mathrm{Ai}(x)+\sum_{j = 1}^{\infty} \sum_{m = 1}^{j}\what p_{m,j}(x)\mathrm{Ai}^{(m)}(x)n^{-\frac{2j}{3}}\nonumber\\
			&= \left( 1 + \sum_{j=1}^{\infty} \sum_{m=1}^{j} \what p_{m,j}(x) \what P_{m}(x) n^{-\frac{2j}{3}} \right) \mathrm{Ai}(x)
			+ \left( \sum_{j=1}^{\infty} \sum_{m=1}^{j} \what p_{m,j}(x) \what Q_{m}(x) n^{-\frac{2j}{3}} \right) \mathrm{Ai}'(x),
		\end{align}
		and similarly,
		\begin{align}\label{eq:Ai_prime_expansion}
			\mathrm{Ai}'\left(n^{\frac{2}{3}}\what f(x_n)\right) 
			&= \mathrm{Ai}'(x)+\sum_{j = 1}^{\infty} \sum_{m = 1}^{j}\what p_{m,j}(x)\mathrm{Ai}^{(m+1)}(x)n^{-\frac{2j}{3}} \nonumber \\
			&= \left( \sum_{j=1}^{\infty} \sum_{m=1}^{j} \what p_{m,j}(x) \what P_{m+1}(x) n^{-\frac{2j}{3}} \right) \mathrm{Ai}(x)
			+ \left( 1 + \sum_{j=1}^{\infty} \sum_{m=1}^{j} \what p_{m,j}(x) \what Q_{m+1}(x) n^{-\frac{2j}{3}} \right) \mathrm{Ai}'(x).
		\end{align}
		Combining \eqref{eq:Ai_expansion} and \eqref{eq:Ai_prime_expansion}, we obtain
		\begin{align}\label{eq:soft-airy-wronskian-expansion}
			&\mathrm{Ai}'(n^{\frac{2}{3}}\what f(y_n))\mathrm{Ai}(n^{\frac{2}{3}}\what f(x_n))-\mathrm{Ai}(n^{\frac{2}{3}}\what f(y_n))\mathrm{Ai}'(n^{\frac{2}{3}}\what f(x_n)) \nonumber\\
			&= \left[ \left( 1 + \sum_{j=1}^{\infty} \sum_{m=1}^{j} \what p_{m,j}(x) \what P_{m}(x) n^{-\frac{2j}{3}} \right) \mathrm{Ai}(x) + \left( \sum_{j=1}^{\infty} \sum_{m=1}^{j} \what p_{m,j}(x) \what Q_{m}(x) n^{-\frac{2j}{3}} \right) \mathrm{Ai}'(x) \right] \nonumber\\
			&\qquad \times \left[ \left( \sum_{j=1}^{\infty} \sum_{m=1}^{j} \what p_{m,j}(y) \what P_{m+1}(y) n^{-\frac{2j}{3}} \right) \mathrm{Ai}(y) + \left( 1 + \sum_{j=1}^{\infty} \sum_{m=1}^{j} \what p_{m,j}(y) \what Q_{m+1}(y) n^{-\frac{2j}{3}} \right) \mathrm{Ai}'(y) \right] \nonumber\\
			&\quad -\left[ \left( \sum_{j=1}^{\infty} \sum_{m=1}^{j} \what p_{m,j}(x) \what P_{m+1}(x) n^{-\frac{2j}{3}} \right) \mathrm{Ai}(x) + \left( 1 + \sum_{j=1}^{\infty} \sum_{m=1}^{j} \what p_{m,j}(x) \what Q_{m+1}(x) n^{-\frac{2j}{3}} \right) \mathrm{Ai}'(x) \right] \nonumber\\
			&\qquad \times \left[ \left( 1 + \sum_{j=1}^{\infty} \sum_{m=1}^{j} \what p_{m,j}(y) \what P_{m}(y) n^{-\frac{2j}{3}} \right) \mathrm{Ai}(y) + \left( \sum_{j=1}^{\infty} \sum_{m=1}^{j} \what p_{m,j}(y) \what Q_{m}(y) n^{-\frac{2j}{3}} \right) \mathrm{Ai}'(y) \right] \nonumber\\
			&= \mathrm{Ai}(x)\mathrm{Ai}'(y)-\mathrm{Ai}'(x)\mathrm{Ai}(y)+\sum_{N = 1}^{\infty} L_N(x,y)n^{-\frac{2N}{3}},
		\end{align}
		where
		\begin{align*}
			L_N(x,y) &= a_{N,00}(x,y)\mathrm{Ai}(x)\mathrm{Ai}(y)+a_{N,01}(x,y)\mathrm{Ai}(x)\mathrm{Ai}'(y) \nonumber \\
			&\quad -a_{N,01}(y,x)\mathrm{Ai}'(x)\mathrm{Ai}(y)+a_{N,11}(x,y)\mathrm{Ai}'(x)\mathrm{Ai}'(y).
		\end{align*}
		Here, the coefficients $a_{N,ij}(x,y)$ for $i,j \in \{0,1\}$ are explicitly given  by
		\begin{align}
			a_{N,00}(x,y) 
			&= \sum_{\ell=1}^N \left(\what p_{\ell,N}(y) \what P_{\ell+1}(y)-\what p_{\ell,N}(x) \what P_{\ell+1}(x)\right) \nonumber \\ 
			&\quad +\sum_{\substack{j + k = N \\ j \geq 1,\, k \geq 1}} \sum_{m = 1}^j \sum_{\ell = 1}^k \left(\what p_{m,j}(x) \what P_{m}(x) \what p_{\ell,k}(y) \what P_{\ell+1}(y) - \what p_{m,j}(x) \what P_{m+1}(x) \what p_{\ell,k}(y) \what P_{\ell}(y) \right), \label{an00} \\
			a_{N,11}(x,y) 
			&= \sum_{\ell=1}^N \left(\what p_{\ell,N}(x) \what Q_{\ell+1}(x)-\what p_{\ell,N}(y) \what Q_{\ell+1}(y)\right) \nonumber \\ 
			&\quad +\sum_{\substack{j + k = N \\ j \geq 1,\, k \geq 1}} \sum_{m = 1}^j \sum_{\ell = 1}^k \left(\what p_{m,j}(x) \what Q_{m+1}(x) \what p_{\ell,k}(y) \what Q_{\ell}(y) - \what p_{m,j}(x) \what Q_{m}(x) \what p_{\ell,k}(y) \what Q_{\ell+1}(y) \right), \label{an11} \\
			a_{N,01}(x,y) 
			&= \sum_{\ell=2}^N\left(\what p_{\ell,N}(x) \what P_{\ell}(x)+\what p_{\ell,N}(y) \what Q_{\ell+1}(y)\right) \nonumber \\ 
			&\quad +\sum_{\substack{j + k = N \\ j \geq 1,\, k \geq 1}} \sum_{m = 1}^j \sum_{\ell = 1}^k \what p_{m,j}(x)\what p_{\ell,k}(y)\left(\what P_{m}(x)\what Q_{\ell+1}(y)- \what P_{m+1}(x)\what Q_{\ell}(y)\right), \label{an01} 
		\end{align}
		Equations \eqref{an00} and \eqref{an11} give $a_{N,00}(x,x)=a_{N,11}(x,x)=0$. For the mixed coefficient, regrouping the terms in \eqref{an01} according to $t=m+\ell$ and using \eqref{pmj} gives
		\begin{align*}
			a_{N,01}(x,x) &= \sum_{\ell=2}^N \what p_{\ell,N}(x)\left(\what P_{\ell}(x) + \what Q_{\ell+1}(x)\right) \nonumber\\
			&\quad + \sum_{t=2}^N \what p_{t, N}(x) \sum_{m=1}^{t-1} \frac{t!}{m!(t-m)!} \left( \what P_m(x) \what Q_{t-m+1}(x) - \what P_{m+1}(x) \what Q_{t-m}(x) \right) \nonumber\\
			&= \sum_{t=2}^{N} t!\, \what p_{t,N}(x) \left( \sum_{m=0}^{t} \left( \frac{1}{m! \, (t-m)!} \left( \what P_m(x) \what Q_{t+1-m}(x) - \what P_{t+1-m}(x)\what Q_m(x) \right) \right) \right).
		\end{align*}
		The inner sum is the Airy Wronskian identity \eqref{airy law} with $t$ in place of $N$, so $a_{N,01}(x,x)=0$. Consequently, each of the polynomials $a_{N,00}(x,y)$, $a_{N,11}(x,y)$, $a_{N,01}(x,y)$, and $a_{N,01}(y,x)$ vanishes along the diagonal $x=y$, and is therefore divisible by $x-y$. Hence there exist polynomials $\widehat b_{N,\kappa\lambda}(x,y)$ such that
		\begin{align}\label{eq:LN_divisible}
			\frac{L_N(x,y)}{x-y}
			&= \widehat b_{N,00}(x,y)\Ai(x)\Ai(y)+\widehat b_{N,01}(x,y)\Ai(x)\Ai'(y) \nonumber\\
			&\quad +\widehat b_{N,10}(x,y)\Ai'(x)\Ai(y)+\widehat b_{N,11}(x,y)\Ai'(x)\Ai'(y).
		\end{align}
		Substituting \eqref{eq:ERRE_expansion}, \eqref{eq:Ai_expansion}, \eqref{eq:Ai_prime_expansion}, and \eqref{eq:soft-airy-wronskian-expansion} into \eqref{eq:soft-local-kernel-representation} gives 
		\begin{align*}
			&\frac{1}{x-y}\left[\Ai'(n^{\frac{2}{3}}\what f(y_n))\Ai(n^{\frac{2}{3}}\what f(x_n))-\Ai(n^{\frac{2}{3}}\what f(y_n))\Ai'(n^{\frac{2}{3}}\what f(x_n))\right] \nonumber\\
			&= K_{\Ai}(x,y)+\sum_{j=1}^{\infty}\frac{L_j(x,y)}{x-y}n^{-\frac{2j}{3}}.
		\end{align*}
		The remaining contribution comes from the factor $(x-y)\sum_{j=1}^{\infty}\what e_j(x,y)n^{-2j/3}$ in \eqref{eq:ERRE_expansion}. Dividing by $x-y$ and using \eqref{eq:Ai_expansion} and \eqref{eq:Ai_prime_expansion}, one finds that each coefficient is again a finite linear combination of $\Ai(x)\Ai(y)$, $\Ai(x)\Ai'(y)$, $\Ai'(x)\Ai(y)$, and $\Ai'(x)\Ai'(y)$ with polynomial coefficients in $x$ and $y$. 

		It remains to prove the uniform remainder bound. Recall that
		\[
			I_n=[t_0,\varepsilon\kappa_V n^{2/3}),
			\qquad
			II_n=[\varepsilon\kappa_V n^{2/3},\infty).
		\]
		On $I_n\times I_n$, expanding \eqref{ERRE} and the Airy factors to order $m$ and using \eqref{eq:LN_divisible} gives the remainder
		\[
			\widehat K_n(x,y)-K_{\Ai}(x,y)-\sum_{j=1}^{m}\widehat L_j(x,y)n^{-2j/3},
		\]
		which is a finite sum of terms
		\[
			n^{-2(m+1)/3}P_{n,m}(x,y)\Ai^{(a)}(\xi_x)\Ai^{(b)}(\xi_y),
			\qquad a,b\in\{0,1\},
		\]
		Here, $P_{n,m}(x,y)$ is a polynomial in $x$ and $y$, and the RH factors are bounded in $D(b,\varepsilon)$. Since $\what f(z)=\kappa_V(z-b)+\mathcal O((z-b)^2)$, the points $\xi_x$ and $\xi_y$ have the same scale as $x$ and $y$. The Airy asymptotics \cite[\S9.7]{DLMF} imply, for $r=0,1$ and every fixed $\Gamma>0$,
		\[
			|\Ai^{(r)}(s)|\le C_{t_0,\Gamma}e^{-\Gamma s},\qquad s\ge t_0.
		\]
		Thus, 
		\begin{equation}\label{eq:soft-local-bound}
			\left|\widehat K_n(x,y)-K_{\Ai}(x,y)-\sum_{j=1}^{m}\widehat L_j(x,y)n^{-2j/3}\right|
			\le C_{t_0,m,\Gamma}n^{-2(m+1)/3}e^{-\Gamma(x+y)},
			\qquad (x,y)\in I_n\times I_n .
		\end{equation}
		The estimates on $II_n\times II_n$, $I_n\times II_n$, and $II_n\times I_n$ follow the region decomposition of \cite[\S3.2.3--\S3.2.4]{DeiftGioev05}.
		On $II_n\times II_n$, one has $x_n,y_n\ge b+\varepsilon$. The transformations \(Y\mapsto\what T\mapsto\what S\) give
		\[
			\widehat K_n(x,y)
			=
			\frac{e^{-n\varphi(x_n)}e^{-n\varphi(y_n)}}{2\pi\ii}
			\frac{
			\begin{pmatrix}0&1\end{pmatrix}
			\what S_+(y_n)^{-1}\what S_+(x_n)
			\begin{pmatrix}1\\0\end{pmatrix}}
			{x-y}.
		\]
		For \(z>b\), the strict Euler--Lagrange inequality in Assumption~\ref{def:V} gives \(\varphi(z)>0\). Moreover,
		\[
			2\what g(z)-V(z)-\ell_V=-2\varphi(z),
			\qquad
			\what g(z)=\log z+\mathcal O(z^{-1}),
			\qquad z\to+\infty.
		\]
		Since $V$ is a polynomial of even degree with positive leading coefficient, it follows that $\varphi(z)\sim V(z)/2$ as $z\to+\infty$. Hence, for any $\varepsilon>0$,
		\[
			\inf_{z\ge b+\varepsilon}\frac{\varphi(z)}{1+z-b}>0.
		\]
		Consequently, there are constants $c_0,\gamma>0$, depending only on $V$, such that, for
		\[
			z_u:=b+\frac{u}{\kappa_V n^{2/3}},
			\qquad u\ge \varepsilon\kappa_V n^{2/3},
		\]
		and all large $n$,
		\begin{equation}\label{eq:soft-exterior-phase}
			e^{-n\varphi(z_u)}
			\le C_m n^{-2(m+1)/3}
			e^{-c_0n}e^{-\gamma u}.
		\end{equation}
		The explicit formula for $\what N$ and the small-norm estimates for $\what R$ give
		\begin{equation*}
			\sup_{z\ge b+\varepsilon}\left(\|\what S_+(z)^{\pm1}\|+\|\what S_+'(z)\|\right)\le C.
		\end{equation*}
		Since the numerator vanishes at $x=y$, the mean-value formula gives
		\begin{align}\label{eq:soft-exterior-quotient}
			\left|\frac{\begin{pmatrix}0&1\end{pmatrix}\what S_+(y_n)^{-1}\what S_+(x_n)\begin{pmatrix}1\\0\end{pmatrix}}{x-y}\right|
			&=\frac{1}{\kappa_Vn^{2/3}}\left|\int_0^1 \begin{pmatrix}0&1\end{pmatrix}\what S_+(y_n)^{-1}
			\what S_+'\!\left(y_n+\theta(x_n-y_n)\right)\begin{pmatrix}1\\0\end{pmatrix}\,\mathrm d\theta\right| \nonumber\\
			&\le C n^{-2/3}.
		\end{align}
		The Airy asymptotics give the same bound for $K_{\Ai}$ and each $\widehat L_j$. Combining these estimates gives
		\[
			|\widehat K_n(x,y)|
			+\left|K_{\Ai}(x,y)+\sum_{j=1}^{m}\widehat L_j(x,y)n^{-2j/3}\right|
			\le C_{t_0,m}n^{-2(m+1)/3}e^{-\gamma(x+y)}
		\]
		on $II_n\times II_n$.

		In the mixed regions, we first prove the estimate for $x\in I_n$ and $y\in II_n$; the case $x\in II_n$ and $y\in I_n$ is obtained by the same argument with the two variables interchanged. If
		\[
			y<\frac32\varepsilon\kappa_V n^{2/3},
		\]
		then $x_n,y_n\in D(b,3\varepsilon/2)$. Repeating the proof of \eqref{eq:soft-local-bound} on this larger disk with $\Gamma=\gamma$ and using
		\[
			e^{-c y^{3/2}}\le C_m n^{-2(m+1)/3}e^{-\gamma y},
		\]
		gives the desired bound.
		It remains to consider
		\[
			y\ge \frac32\varepsilon\kappa_V n^{2/3}.
		\]
		Since $x\in I_n$,
		\begin{equation}\label{eq:soft-mixed-denominator}
			\frac1{|x-y|}
			\le \frac{2}{\varepsilon\kappa_V}n^{-2/3}.
		\end{equation}
		For this remaining mixed subcase,
		\[
			y_n\ge b+3\varepsilon/2,
		\]
		and hence
		\[
			\what T_+(y_n)=\what S_+(y_n)=\what R_+(y_n)\what N(y_n).
		\]
		At the $x$-side, $x_n$ remains in the Airy disk. Set $\zeta_x:=n^{2/3}\what f(x_n)$. The relation between $\what T_+(x_n)$ and $\what S_+(x_n)$ is
		\[
			\what T_+(x_n)
			=
			\begin{cases}
				\what S_+(x_n)
				\begin{pmatrix}1&0\\ e^{2n\varphi_+(x_n)}&1\end{pmatrix},
				& x<0,\\[6pt]
				\what S_+(x_n),
				& x>0.
			\end{cases}
		\]
		Substituting this relation into \eqref{def:K} and \eqref{def:hatT}, and then using $\what S=\what R\what P^{(b)}$ at $x_n$, gives
		\begin{equation*}
			\widehat K_n(x,y)
			=\frac{e^{-n\varphi(y_n)}}{2\pi\ii(x-y)}
			\begin{pmatrix}0&1\end{pmatrix}\what S_+(y_n)^{-1}\what R(x_n)\what E_n^{(b)}(x_n)
			\sqrt{2\pi}
			\begin{pmatrix}\Ai(\zeta_x)\\-\ii\Ai'(\zeta_x)\end{pmatrix}.
		\end{equation*}
		Replacing $\what S_+(y_n)$ by $\what R_+(y_n)\what N(y_n)$ gives
		\begin{equation*}
			\widehat K_n(x,y)
			=\frac{e^{-n\varphi(y_n)}}{\sqrt{2\pi}\,\ii(x-y)}
			\begin{pmatrix}0&1\end{pmatrix}\what N(y_n)^{-1}\what R_+(y_n)^{-1}
			\what R(x_n)\what E_n^{(b)}(x_n)
			\begin{pmatrix}\Ai(\zeta_x)\\-\ii\Ai'(\zeta_x)\end{pmatrix}.
		\end{equation*}
		The factors $\what N^{\pm1}$ and $\what R^{\pm1}$ are uniformly bounded on the exterior ray and in the Airy disk, and \eqref{def:E} gives $\|\what E_n^{(b)}(x_n)\|\le Cn^{1/6}$. The Airy estimates and the local behavior of $\what f$ imply, after decreasing $\gamma$ if necessary,
		\[
			|\Ai(\zeta_x)|+|\Ai'(\zeta_x)|
			\le C_{t_0}e^{-\gamma x},\qquad x\in I_n.
		\]
		Combining these bounds with \eqref{eq:soft-mixed-denominator} and \eqref{eq:soft-exterior-phase}, we obtain
		\[
			|\widehat K_n(x,y)|
			\le \frac{Cn^{1/6}}{|x-y|}e^{-n\varphi(y_n)}e^{-\gamma x}
			\le C_m n^{-2(m+1)/3}e^{-\gamma(x+y)}.
		\]
		The same estimate holds for $K_{\Ai}$ and the $\widehat L_j$, hence the desired bound holds on $I_n\times II_n$ and $II_n\times I_n$.

		The preceding coefficient calculation and the estimates on $I_n\times I_n$, $II_n\times II_n$, $I_n\times II_n$, and $II_n\times I_n$ give
		\begin{equation*}
			\widehat K_n(x,y)
			= K_{\Ai}(x,y)+ \sum_{j = 1}^{m} \widehat L_j(x,y)n^{-\frac{2j}{3}} + n^{-\frac{2(m+1)}{3}}\Boh(e^{-\gamma(x+y)}),
		\end{equation*}
		uniformly for $x,y\in [t_0,\infty)$, where each $\widehat L_j(x,y)$ is of the form
		\begin{align*}
			\widehat L_j(x,y) = & \what a_{j,00}(x,y)\Ai(x)\Ai(y) + \what a_{j,01}(x,y)\Ai(x)\Ai'(y) \nonumber\\
			& +\what a_{j,10}(x,y)\Ai'(x)\Ai(y) +\what a_{j,11}(x,y)\Ai'(x)\Ai'(y),
		\end{align*}
		with $\what a_{j,\kappa \lambda}(x,y) \in \mathbb{R}[x,y]$. This proves the expansion and the asserted polynomial form in Theorem~\ref{softthm}.

	% \begin{proof}[Proof of the explicit first correction in Theorem~\ref{softthm}]
		To show \eqref{eq:L1-explicit}, we see from \(\what f(z)=\kappa_V(z-b)+d_b(z-b)^2+\Boh((z-b)^3)\) gives
		\begin{align*}
			(z-a)\frac{\what f(z)}{z-b}
			&=(b-a)\kappa_V\left[
			1+\left(\frac1{b-a}+\frac{d_b}{\kappa_V}\right)(z-b)
			+\Boh((z-b)^2)\right].
		\end{align*}
		Since \(x_n-y_n=(x-y)/(\kappa_V n^{2/3})\), this yields
		\begin{align*}
			\left((y_n-a)\frac{\what f(y_n)}{y_n-b}\right)^{-\sigma_3/4}
			\left((x_n-a)\frac{\what f(x_n)}{x_n-b}\right)^{\sigma_3/4}
			&=I+
			\frac14\left(\frac1{b-a}+\frac{d_b}{\kappa_V}\right)
			(x_n-y_n)\sigma_3
			+\Boh\!\left((x-y)n^{-4/3}\right)\\
			&=I+(x-y)\frac{\kappa_V+(b-a)d_b}{4(b-a)\kappa_V^2}
			n^{-2/3}\sigma_3
			+\Boh\!\left((x-y)n^{-4/3}\right).
		\end{align*}
		By \eqref{asy:R_expanded}, the possible \(n^{-2/3}\) contribution of the conjugated \(\what R\)-factor is
		\[
			2\ii\bigl(\what\alpha_{2,1}(x_n)-\what\alpha_{2,1}(y_n)\bigr)n^{-2/3}
			\begin{pmatrix}0&0\\1&0\end{pmatrix}.
		\]
		Since \(\what\alpha_{2,1}\) is analytic at \(b\), this term is \(\Boh((x-y)n^{-4/3})\). Hence the first RH-prefactor coefficient in \eqref{eq:ERRE_expansion} is
		\begin{equation}\label{eq:e1_explicit}
			c_b:=\frac{\kappa_V+(b-a)d_b}{4(b-a)\kappa_V^2},
			\qquad
			\what e_1(x,y)=c_b\sigma_3
			=
			c_b
			\begin{pmatrix}1&0\\0&-1\end{pmatrix}.
		\end{equation}
		For the Airy factors, \eqref{def:f} and \eqref{def:xn} give
		\begin{equation}\label{eq:first-order-airy-argument}
			n^{\frac23}\what f(x_n)=x+\frac{d_b}{\kappa_V^2}x^2 n^{-\frac23}+\Boh(n^{-\frac43}),
			\qquad
			n^{\frac23}\what f(y_n)=y+\frac{d_b}{\kappa_V^2}y^2 n^{-\frac23}+\Boh(n^{-\frac43}).
		\end{equation}
		Uniformly for $x,y$ in compact sets, Taylor expansion together with $\Ai''(z)=z\Ai(z)$ gives
		\begin{align}\label{eq:first-order-airy-expansion}
			\Ai(n^{\frac23}\what f(x_n))
			&=\Ai(x)+\frac{d_b}{\kappa_V^2}x^2\Ai'(x)n^{-\frac23}+\Boh(n^{-\frac43}), \nonumber\\
			\Ai'(n^{\frac23}\what f(x_n))
			&=\Ai'(x)+\frac{d_b}{\kappa_V^2}x^3\Ai(x)n^{-\frac23}+\Boh(n^{-\frac43}),
		\end{align}
		and the same formulas with $x$ replaced by $y$.
The contribution of the identity term in \eqref{eq:ERRE_expansion} is
		\begin{align}\label{eq:first-order-airy-quotient}
			&\frac{\Ai'(n^{\frac23}\what f(y_n))\Ai(n^{\frac23}\what f(x_n))
			-\Ai(n^{\frac23}\what f(y_n))\Ai'(n^{\frac23}\what f(x_n))}{x-y}\nonumber\\
			&\quad =K_{\Ai}(x,y)
			+\left[-\frac{d_b}{\kappa_V^2}(x^2+xy+y^2)\Ai(x)\Ai(y)
			+\frac{d_b}{\kappa_V^2}(x+y)\Ai'(x)\Ai'(y)\right]n^{-\frac23}
			+\Boh(n^{-\frac43}).
		\end{align}
		The \(n^{-2/3}\) term of the RH prefactor is \((x-y)c_b\sigma_3\),with the leading Airy factors, it gives
		\begin{multline}\label{eq:first-order-rh-contribution}
			\frac{1}{\mathrm i(x-y)}
			\begin{pmatrix}\mathrm i\Ai'(y)&\Ai(y)\end{pmatrix}
			(x-y)c_b\sigma_3
			\begin{pmatrix}\Ai(x)\\-\mathrm i\Ai'(x)\end{pmatrix}
		=c_b\left[\Ai(x)\Ai'(y)+\Ai'(x)\Ai(y)\right].
		\end{multline}
		 Equations \eqref{eq:first-order-airy-quotient} and \eqref{eq:first-order-rh-contribution} therefore give \eqref{eq:L1-explicit}.

        This completes the proof of Theorem \ref{softthm}. \qed

\subsection{Proof of Corollary~\ref{soft edge lifting}}
		By the definition of $\widehat F(n;t)$ in \eqref{eq:soft-distribution-expansion}, its Fredholm determinant representation is
		\begin{equation*}
			\widehat F(n;t)=\det(\mathbf I-\widehat{\mathbf K}_n),
		\end{equation*}
        where $\widehat{\mathbf K}_n$ is the integral operator acting on $L^2(t,\infty)$ associated with the rescaled kernel $\widehat{K}_n$.
		The remainder bound in Theorem~\ref{softthm} allows us to apply the Fredholm determinant argument from the proof of \cite[Theorem~2.1]{Bornemann24b}. Writing $\widehat{\mathbf L}_j$ for the integral operator on $L^2(t,\infty)$ with kernel $\widehat L_j$, set
		\begin{equation*}
			\mathbf E_{m,n}:=(\mathbf I-\mathbf K_{\Ai})^{-1}\sum_{j=1}^{m}n^{-2j/3}\widehat{\mathbf L}_j,
			\qquad
			D_{m,n}(t):=\det(\mathbf I-\mathbf E_{m,n}).
		\end{equation*}
		Then, uniformly for $t\in[t_0,\infty)$,
		\begin{equation*}
			\widehat F(n;t)
			=
			F_{\Ai}(t)D_{m,n}(t)
			+n^{-2(m+1)/3}\Boh(e^{-2\gamma t}).
		\end{equation*}
		Applying Proposition~\ref{prop:finite-rank-det-template} with $\eta=n^{-2/3}$
         to $D_{m,n}(t)$ yields
		\begin{equation*}
			\widehat F(n;t)
			=
			F_{\Ai}(t)\left(1+\sum_{j=1}^{m} d_j(t)n^{-2j/3}\right)
			+n^{-2(m+1)/3}\Boh(e^{-2\gamma t}),
		\end{equation*}
		Thus \eqref{eq:soft-distribution-expansion} holds with $F_{\Ai,j}(t)=F_{\Ai}(t)d_j(t)$. 
        
		To compute the first coefficient, use $c_b$ from \eqref{eq:e1_explicit} and set
		\[
			u_{jk}(t):=\left\langle(\mathbf I-\mathbf K_{\Ai})^{-1}\Ai^{(j)},\Ai^{(k)}\right\rangle_{L^2(t,\infty)}.
		\]
		In view of the equalities 
		\[
			x\Ai(x)=\Ai''(x),\qquad x\Ai'(x)=\Ai'''(x)-\Ai(x),\qquad x^2\Ai(x)=\Ai^{(4)}(x)-2\Ai'(x),
		\]
		we rewrite \eqref{eq:L1-explicit} as the operator identity
		\begin{align*}
			\widehat{\mathbf L}_1
			&=-\frac{d_b}{\kappa_V^2}\{(\Ai^{(4)}-2\Ai')\otimes\Ai+\Ai''\otimes\Ai''
			+\Ai\otimes(\Ai^{(4)}-2\Ai')\} \\
			&\quad +c_b(\Ai\otimes\Ai'+\Ai'\otimes\Ai)
			+\frac{d_b}{\kappa_V^2}\{(\Ai'''-\Ai)\otimes\Ai'
			+\Ai'\otimes(\Ai'''-\Ai)\}.
		\end{align*}
		Applying \eqref{eq:generic-d1} to this finite-rank operator and using the self-adjointness of $\mathbf K_{\Ai}$, we obtain
		\[
			d_1(t)
			=
			\frac{2d_b}{\kappa_V^2}u_{04}(t)+\frac{d_b}{\kappa_V^2}u_{22}(t)-\frac{2d_b}{\kappa_V^2}u_{13}(t)-2\left(\frac{d_b}{\kappa_V^2}+c_b\right)u_{01}(t).
		\]
		The Shinault--Tracy table for Airy resolvent quantities \cite[p.~68]{ShinaultTracy2011}, together with the linear $F$-form calculus described in \cite[Appendix~B]{Bornemann24}, gives the following formulas for $F=F_{\Ai}$:
		\begin{equation}\label{eq:soft-airy-resolvent-identities}
		\begin{aligned}
			F_{\Ai}(t)u_{01}(t)
			&=F_{\Ai}(t)u_{10}(t)
			=\frac12 F_{\Ai}''(t),\\
			F_{\Ai}(t)u_{22}(t)
			&=\frac{t^2}{5}F_{\Ai}'(t)-\frac{3}{10}F_{\Ai}''(t)+\frac{1}{20}F_{\Ai}^{(5)}(t),\\
			F_{\Ai}(t)u_{13}(t)
			&=F_{\Ai}(t)u_{31}(t)
			=-\frac{t^2}{5}F_{\Ai}'(t)+\frac{2}{15}F_{\Ai}''(t)
			+\frac{t}{6}F_{\Ai}^{(3)}(t)+\frac{1}{30}F_{\Ai}^{(5)}(t),\\
			F_{\Ai}(t)u_{04}(t)
			&=F_{\Ai}(t)u_{40}(t)
			=\frac{t^2}{5}F_{\Ai}'(t)+\frac{47}{60}F_{\Ai}''(t)
			+\frac{t}{6}F_{\Ai}^{(3)}(t)+\frac{1}{120}F_{\Ai}^{(5)}(t),
		\end{aligned}
		\end{equation}
        where we have also made use of the symmetry relation $u_{jk}=u_{kj}$.
		%Therefore the $F_{\Ai}^{(3)}$ and $F_{\Ai}^{(5)}$ terms cancel and
        Thus, 
		\begin{equation}\label{eq:soft-airy-resolvent-combination}
			F_{\Ai}(t)\bigl(2u_{04}(t)+u_{22}(t)-2u_{13}(t)\bigr)
			=t^2F_{\Ai}'(t)+F_{\Ai}''(t).
		\end{equation}
		Since $F_{\Ai,1}(t)=F_{\Ai}(t)d_1(t)$, equations \eqref{eq:soft-airy-resolvent-identities} and \eqref{eq:soft-airy-resolvent-combination} yield \eqref{eq:soft-F1-explicit}.

    This completes the proof of Corollary~\ref{soft edge lifting}. \qed

	\subsection{Proof of Theorem \ref{hardthm}}
		To analyze the rescaled kernel $\widetilde K_n$ defined in \eqref{expansion-hard} near the hard edge at the origin, introduce the scaled variables
		\begin{equation}\label{def:uvn}
			u_n:=\frac{u}{\kappa_Q n^2}, \qquad v_n:=\frac{v}{\kappa_Q n^2},
		\end{equation}
		and consider $u,v$ in a fixed compact interval $[0,s]$. Tracing back the transformations $Y\to \widetilde T\to \widetilde S\to \widetilde R$ and using the local parametrix \eqref{def:P0hard} at the origin, we obtain
		\begin{align}\label{hard-kernel-representation}
			\widetilde K_n(u,v)
			&=\frac{1}{2\pi \ii (u-v)}
			\begin{pmatrix}
				\pi \ii \,\widehat v_n J_\alpha'(\widehat v_n) & J_\alpha(\widehat v_n)
			\end{pmatrix}
			\widetilde E_n(v_n)^{-1}\widetilde R(v_n)^{-1}\widetilde R(u_n)\widetilde E_n(u_n)
			\begin{pmatrix}
				J_\alpha(\widehat u_n)\\[1ex]
				-\pi \ii \,\widehat u_n J_\alpha'(\widehat u_n)
			\end{pmatrix},
		\end{align}
		where
		\begin{equation}\label{def:uhatvhat}
			\widehat u_n:=2\bigl(-\widetilde f_n(u_n)\bigr)^{1/2}, \qquad \widehat v_n:=2\bigl(-\widetilde f_n(v_n)\bigr)^{1/2}.
		\end{equation}

		Consider the factor
		\begin{equation*}
			\widetilde E_n(v_n)^{-1}\widetilde R(v_n)^{-1}\widetilde R(u_n)\widetilde E_n(u_n).
		\end{equation*}
		The same conjugation as in the soft-edge case gives us
		\begin{align}\label{erre11}
			&\widetilde E_n(v_n)^{-1}\widetilde R(v_n)^{-1}\widetilde R(u_n)\widetilde E_n(u_n) \notag\\
			&=\left(n^{-\frac{\sigma_3}{2}}\frac{1}{\sqrt2}
			\begin{pmatrix}
				1 & \ii\\
				\ii & 1
			\end{pmatrix}
			\left(\frac{\beta}{4}\right)^{-\frac{\alpha}{2}\sigma_3}\widetilde E_n(v_n)\right)^{-1} \notag\\
			&\quad \times
			\left(n^{-\frac{\sigma_3}{2}}\frac{1}{\sqrt2}
			\begin{pmatrix}
				1 & \ii\\
				\ii & 1
			\end{pmatrix}
			\left(\frac{\beta}{4}\right)^{-\frac{\alpha}{2}\sigma_3}
			\widetilde R(v_n)
			\left(\frac{\beta}{4}\right)^{\frac{\alpha}{2}\sigma_3}
			\frac{1}{\sqrt2}
			\begin{pmatrix}
				1 & -\ii\\
				-\ii & 1
			\end{pmatrix}n^{\frac{\sigma_3}{2}}\right)^{-1} \notag\\
			&\quad \times
			\left(n^{-\frac{\sigma_3}{2}}\frac{1}{\sqrt2}
			\begin{pmatrix}
				1 & \ii\\
				\ii & 1
			\end{pmatrix}
			\left(\frac{\beta}{4}\right)^{-\frac{\alpha}{2}\sigma_3}
			\widetilde R(u_n)
			\left(\frac{\beta}{4}\right)^{\frac{\alpha}{2}\sigma_3}
			\frac{1}{\sqrt2}
			\begin{pmatrix}
				1 & -\ii\\
				-\ii & 1
			\end{pmatrix}n^{\frac{\sigma_3}{2}}\right) \notag\\
			&\quad \times
			\left(n^{-\frac{\sigma_3}{2}}\frac{1}{\sqrt2}
			\begin{pmatrix}
				1 & \ii\\
				\ii & 1
			\end{pmatrix}
			\left(\frac{\beta}{4}\right)^{-\frac{\alpha}{2}\sigma_3}\widetilde E_n(u_n)\right).
		\end{align}
		Using \eqref{defn}, \eqref{deff-hard}, and \eqref{E2}, we obtain 
		\begin{align}\label{etilde}
			&n^{-\frac{\sigma_3}{2}}\frac{1}{\sqrt2}
			\begin{pmatrix}
				1&\ii\\
				\ii&1
			\end{pmatrix}
			\left(\frac{\beta}{4}\right)^{-\frac{\alpha}{2}\sigma_3}\widetilde E_n(z) \nonumber\\
			&=(-1)^n
			\begin{pmatrix}
				0 & \ii\\
				\ii & 0
			\end{pmatrix}
			\left(\frac{z-\beta}{z}\right)^{\frac14\sigma_3}
			n^{\frac{\sigma_3}{2}}\frac{1}{\sqrt2}
			\begin{pmatrix}
				1&-\ii\\
				-\ii&1
			\end{pmatrix}
			(-\varphi^*(z))^{\frac{\alpha}{2}\sigma_3} \nonumber\\
			&\quad\times\frac{1}{\sqrt2}
			\begin{pmatrix}
				1&\ii\\
				\ii&1
			\end{pmatrix}
			n^{\frac{\sigma_3}{2}}
			\widetilde f(z)^{\frac14\sigma_3}(2\pi)^{\frac12\sigma_3}.
		\end{align}
		In \eqref{erre11}, the scalar sign $(-1)^n$ cancels between the inverse $v$-factor and the $u$-factor. At a hard-scaled point $z=x/(\kappa_Qn^2)$, the two factors $n^{\sigma_3/2}$ combine with the boundary expansions of $\wtil\gamma$, $\varphi^*$, and $\widetilde f$ retain explicit $n$-dependence.

		Turning to the $\widetilde R$-factor, Corollary~\ref{cor:hard-R-structure} gives
		\begin{align}\label{cor:R-conjugation1}
			&n^{-\frac{\sigma_3}{2}}\frac{1}{\sqrt2}
			\begin{pmatrix}
				1 & \ii\\
				\ii & 1
			\end{pmatrix}
			\left(\frac{\beta}{4}\right)^{-\frac{\alpha}{2}\sigma_3}
			\widetilde R(z)
			\left(\frac{\beta}{4}\right)^{\frac{\alpha}{2}\sigma_3}
			\frac{1}{\sqrt2}
			\begin{pmatrix}
				1 & -\ii\\
				-\ii & 1
			\end{pmatrix}n^{\frac{\sigma_3}{2}} \nonumber\\
			&= I+
			\begin{pmatrix}
				\displaystyle\sum_{m=1}^{\infty}
				\bigl(
				\widetilde\alpha_{0,m}(z)-\widetilde\alpha_{2,m}(z)
				\bigr)n^{-m} & \displaystyle\sum_{m=1}^{\infty}
				\bigl(
				\widetilde\alpha_{1,m}(z)-\ii\widetilde\alpha_{3,m}(z)
				\bigr)n^{-m+1}\\[3ex]
				\displaystyle\sum_{m=1}^{\infty}
				\bigl(
				\widetilde\alpha_{1,m}(z)+\ii\widetilde\alpha_{3,m}(z)
				\bigr)n^{-m-1} &\displaystyle\sum_{m=1}^{\infty}
				\bigl(
				\widetilde\alpha_{0,m}(z)+\widetilde\alpha_{2,m}(z)
				\bigr)n^{-m}
			\end{pmatrix}.
		\end{align}
		As in the soft-edge case, substituting $u_n=u/(\kappa_Qn^2)$ and $v_n=v/(\kappa_Qn^2)$ into \eqref{erre11}, \eqref{etilde}, and \eqref{cor:R-conjugation1} gives coefficient functions that are polynomial in $u$ and $v$ at each fixed order in $n^{-1}$. Moreover, the left-hand side of \eqref{erre11} equals to the identity matrix when $u=v$. Thus
		\begin{equation}\label{tildee}
			\widetilde E_n(v_n)^{-1}\widetilde R(v_n)^{-1}\widetilde R(u_n)\widetilde E_n(u_n)
			\sim I+(u-v)\sum_{j=1}^{\infty}\widetilde e_j(u,v)n^{-j},
		\end{equation}
		where each $\widetilde e_j(u,v)$ is a matrix whose entries are polynomials in $u$ and $v$.
		
		For the Bessel-function factors, we use the basis in Theorem~\ref{hardthm} and set
		\begin{equation*}
			\phi_0(x):=J_\alpha(\sqrt x),
			\qquad
			\phi_1(x):=\sqrt x\,J_\alpha'(\sqrt x).
		\end{equation*}
		By the local behavior of $\widetilde\xi$ at the origin and \eqref{deffn-explicit}, one has
		\begin{equation*}
			-4\widetilde f(z)=\kappa_Q z\bigl(1+\mathcal{O}(z)\bigr),
			\qquad z\to 0.
		\end{equation*}
		Therefore, for  $x\in [0,s]$,
		\begin{equation}\label{eq:hard-lambda-expansion}
			\lambda_n(x):=\widehat x_n^{\,2}=4\bigl(-\widetilde f_n(x_n)\bigr)
			=x+\sum_{j=1}^{\infty}c_{1,j}(x)n^{-j},
		\end{equation}
		where each $c_{1,j}(x)$ belongs to $x\mathbb{R}[x]$. More generally,
		\begin{equation*}
			\frac{1}{m!}\bigl(\lambda_n(x)-x\bigr)^m=\sum_{j=m}^{\infty}c_{m,j}(x)n^{-j},
			\qquad c_{m,j}(x)\in x^m\mathbb{R}[x].
		\end{equation*}
		
		Now let $y(x):=J_\alpha(\sqrt x)$. The classical Bessel differential equation gives
		\begin{equation*}
			y''(x)=-\frac{1}{x}y'(x)+\frac{\alpha^2-x}{4x^2}y(x).
		\end{equation*}
		Before refering to Proposition~\ref{lemma}, we verify that $y$ and $y'$ are linearly independent over $\mathbb C(x)$. It suffices to show that the associated Riccati equation
		\begin{equation}\label{eq:bessel-riccati}
			w'(x)+w(x)^2+\frac{1}{x}w(x)-\frac{\alpha^2-x}{4x^2}=0
		\end{equation}
		has no rational solution. Suppose to the contrary that $w\in\mathbb C(x)$ solves \eqref{eq:bessel-riccati}, and write its Laurent expansion at infinity as
		\begin{equation*}
			w(x)=\sum_{k=\ell}^{\infty}c_kx^{-k},
			\qquad c_\ell\neq 0.
		\end{equation*}
		If $\ell<0$, then $w^2$ contributes the unique highest positive power of $x$ in \eqref{eq:bessel-riccati}, which is impossible. Hence $\ell\ge 0$. If $\ell=0$, then the constant term in \eqref{eq:bessel-riccati} gives $c_0^2=0$, so in fact $\ell\ge 1$. But then
		\begin{equation*}
			w'(x)+w(x)^2+\frac{1}{x}w(x)=\mathcal O(x^{-2}),
			\qquad x\to\infty,
		\end{equation*}
		whereas
		\begin{equation*}
			-\frac{\alpha^2-x}{4x^2}=\frac{1}{4x}+\mathcal O(x^{-2}),
			\qquad x\to\infty.
		\end{equation*}
		This contradiction shows that \eqref{eq:bessel-riccati} admits no rational solution. Therefore $y$ and $y'$ are linearly independent over $\mathbb C(x)$.
		%Arguing as in Proposition~\ref{lemma}, 
        We now introduce a formal power series in $w$,
		\begin{equation*}
			y(x+w)=y'(x)P(x;w)+y(x)Q(x;w),
		\end{equation*}
		with $P(x;0)=0$, $Q(x;0)=1$, $\partial_wP(x;0)=1$, and $\partial_wQ(x;0)=0$. Since the coefficient of $y'$ in the differential equation is $-1/x$, Proposition~\ref{lemma} yields the conservation law
		\begin{equation*}
			(x+w)\left(P(x;w)\partial_wQ(x;w)-Q(x;w)\partial_wP(x;w)\right)=-x.
		\end{equation*}
		Define
		\begin{equation*}
			A(x;w):=
			\begin{pmatrix}
				Q(x;w) & \dfrac{1}{2x}P(x;w)\\[2ex]
				2(x+w)\partial_wQ(x;w) & \dfrac{x+w}{x}\partial_wP(x;w)
			\end{pmatrix}.
		\end{equation*}
		Then, using $y'(x)=\phi_1(x)/(2x)$ and $\phi_1(x+w)=2(x+w)\partial_wy(x+w)$, we obtain
		\begin{equation*}
			\begin{pmatrix}
				\phi_0(x+w)\\[1ex]
				\phi_1(x+w)
			\end{pmatrix}
			=
			A(x;w)
			\begin{pmatrix}
				\phi_0(x)\\[1ex]
				\phi_1(x)
			\end{pmatrix}.
		\end{equation*}
		Moreover, the conservation law gives
		\begin{equation*}
			\det A(x;w)=\frac{x+w}{x}\Bigl(Q(x;w)\partial_wP(x;w)-P(x;w)\partial_wQ(x;w)\Bigr)=1.
		\end{equation*}
		As a formal power series in $w$, one has
		\begin{align*}
			P(x;w)&=w+\mathcal{O}(w^2), \qquad
			\partial_wP(x;w)=1+\mathcal{O}(w), \nonumber\\
			Q(x;w)&=1+\mathcal{O}(w), \qquad
			\partial_wQ(x;w)=\mathcal{O}(w).
		\end{align*}
		Let $w=\lambda_n(x)-x$ and using $(\lambda_n(x)-x)^m\in x^m\mathbb{R}[x]$, we see term by term that
		\[
			\frac{P(x;\lambda_n(x)-x)}{x}, \qquad
			\frac{\lambda_n(x)}{x}\partial_wP(x;\lambda_n(x)-x), \qquad \lambda_n(x)\partial_wQ(x;\lambda_n(x)-x)
		\]
		are all regular at $x=0$. Hence
		\begin{equation*}
			A_n(x):=A\bigl(x;\lambda_n(x)-x\bigr)
			=I+\sum_{j=1}^{\infty}A^{(j)}(x)n^{-j},
		\end{equation*}
		where each $A^{(j)}$ is a $2\times 2$ matrix with entries in $\mathbb{R}[x]$, and
		\begin{equation*}
			\begin{pmatrix}
				J_\alpha(\widehat x_n)\\[1ex]
				\widehat x_nJ_\alpha'(\widehat x_n)
			\end{pmatrix}
			=
			A_n(x)
			\begin{pmatrix}
				\phi_0(x)\\[1ex]
				\phi_1(x)
			\end{pmatrix},
			\qquad
			\det A_n(x)=1.
		\end{equation*}
		
	Let
		\begin{equation*}
			\Omega:=\begin{pmatrix}
				0 & -1\\
				1 & 0
			\end{pmatrix},
			\qquad
			\Phi(x):=
			\begin{pmatrix}
				\phi_0(x)\\[1ex]
				\phi_1(x)
			\end{pmatrix}.
		\end{equation*}
		Since $\det A_n(x)=1$, one has $A_n(x)^T\Omega A_n(x)=\Omega$. Therefore, for each $j\ge 1$, the coefficient of $n^{-j}$ in $A_n(v)^T\Omega A_n(u)-\Omega$ is a polynomial in $u$ and $v$ divisible by $u-v$. 
		
		% The identity-matrix contribution in \eqref{tildee} is
		% \begin{equation}
		% 	\frac{1}{2(u-v)}\Phi(v)^TA_n(v)^T\Omega A_n(u)\Phi(u).
		% \end{equation}
		% Its leading term is $K_{\Bes}(u,v)$, while the divisibility above cancels the denominator in every higher-order coefficient. The second term in \eqref{tildee} is regular at $u=v$ because it already contains the factor $u-v$.
		% The leading term is
		% \begin{equation}
		% 	\frac{1}{2(u-v)}\Phi(v)^T\Omega\Phi(u)=K_{\Bes}(u,v).
		% \end{equation}
		
		% It remains to consider the second term in \eqref{tildee}. The factor $u-v$ cancels the denominator in \eqref{hard-kernel-representation}. After this cancellation, each coefficient is obtained from products of the matrices $\widetilde e_j(u,v)$, $A^{(\ell)}(u)$, and $A^{(r)}(v)$. These matrices have polynomial entries after evaluation at the corresponding variables. Hence, together with the identity contribution above, the coefficient of each power $n^{-j}$ in $\widetilde K_n(u,v)$ lies in the $\mathbb R[u,v]$-span of
		% \begin{equation}
		% 	J_\alpha(\sqrt u)J_\alpha(\sqrt v), \quad J_\alpha(\sqrt u)\sqrt v\,J_\alpha'(\sqrt v), \quad \sqrt u\,J_\alpha'(\sqrt u)J_\alpha(\sqrt v), \quad \sqrt u\,J_\alpha'(\sqrt u)\sqrt v\,J_\alpha'(\sqrt v).
		% \end{equation}

		Combining \eqref{hard-kernel-representation} and \eqref{tildee} with the preceding divisibility yields
		\begin{equation*}
			\widetilde K_n(u,v)=K_{\Bes}(u,v)+\sum_{j=1}^{m}\widetilde L_j(u,v)n^{-j}+\mathcal{O}(n^{-m-1}),
		\end{equation*}
		for each fixed $u,v\in (0,s]$. The weighted basis functions
		\[
			u^{-\alpha/2}J_\alpha(\sqrt u),\qquad
			u^{-\alpha/2}\sqrt u\,J_\alpha'(\sqrt u)
		\]
		are bounded on $[0,s]$. These bounds also apply to the finite Taylor remainders on $0,s^2$, so multiplication by $(uv)^{-\alpha/2}$ yields \eqref{expansion-hard-weighted}.

	% \begin{proof}[Proof of the explicit first correction in Theorem~\ref{hardthm}]
    To prove \eqref{eq:hard-L1-explicit}, first note that $x_n=x/(\kappa_Qn^2)$ and $\widetilde f_n=n^2\widetilde f$. Together with \eqref{eq:hard-lambda-expansion}, the expansion of $\widetilde f$ at the origin gives
		\begin{equation}\label{eq:hard-lambda-no-n1}
			\lambda_n(x)=x+\mathcal O(n^{-2}),
		\end{equation}
		uniformly for $x$ in compact subsets of $[0,\infty)$. Thus $A_n(x)=I+\mathcal O(n^{-2})$.

		For $x>0$, take the upper or lower boundary values consistently. From \eqref{defn}, \eqref{def:varphi-hard} and \eqref{deffn-explicit},
		\begin{align}\label{eq:hard-prefactor-scalars}
			\wtil\gamma_\pm(x_n)
			&=e^{\pm\pi\ii/4}\sqrt{\beta h_Q(0)}\,n^{1/2}x^{-1/4}
			\bigl(1+\mathcal O(n^{-2})\bigr),\nonumber\\
			\bigl(-\varphi^*_\pm(x_n)\bigr)^{\alpha/2}
			&=1\mp\frac{\alpha\ii\sqrt{x}}{\beta h_Q(0)}\,n^{-1}
			+\mathcal O(n^{-2}),\nonumber\\
			\widetilde f_{n,\pm}(x_n)^{1/4}
			&=e^{\mp\pi\ii/4}\frac{x^{1/4}}{\sqrt2}
			\bigl(1+\mathcal O(n^{-2})\bigr).
		\end{align}
		Substituting the above formulas into \eqref{etilde}, we have 
		\begin{equation}\label{eq:hard-prefactor-one-point}
			\widetilde E_n(0)^{-1}\widetilde E_n(x_n)
			=I+\frac{\alpha\pi\ii}{\beta h_Q(0)}x
			\begin{pmatrix}
				0&0\\
				1&0
			\end{pmatrix}n^{-1}
			+\mathcal O(xn^{-2}).
		\end{equation}
		The same expression is obtained from either boundary value in \eqref{eq:hard-prefactor-scalars}. Consequently,
		\begin{equation}\label{eq:hard-prefactor-two-point}
			\widetilde E_n(v_n)^{-1}\widetilde E_n(u_n)
			=I+\frac{\alpha\pi\ii}{\beta h_Q(0)}(u-v)
			\begin{pmatrix}
				0&0\\
				1&0
			\end{pmatrix}n^{-1}
			+\mathcal O\bigl((u-v)n^{-2}\bigr).
		\end{equation}

		The RH problem \ref{rhp57} and the small-norm estimate imply that $\widetilde R$ is analytic at the origin and satisfies $\widetilde R(z)=I+\mathcal O(n^{-1})$ uniformly in a fixed disk there. Cauchy's estimate therefore gives
		\begin{equation*}
			\widetilde R(v_n)^{-1}\widetilde R(u_n)-I
			=\mathcal O\bigl((u-v)n^{-3}\bigr).
		\end{equation*}
		Moreover, \eqref{E2} and \eqref{eq:hard-prefactor-scalars} give us $\widetilde E_n(x_n)^{\pm1}=\mathcal O(n^{1/2})$. Combining these estimates with \eqref{eq:hard-prefactor-two-point}, we obtain
		\begin{equation}\label{eq:hard-total-first-product}
			\widetilde E_n(v_n)^{-1}\widetilde R(v_n)^{-1}\widetilde R(u_n)\widetilde E_n(u_n)
			=I+\frac{\alpha\pi\ii}{\beta h_Q(0)}(u-v)
			\begin{pmatrix}
				0&0\\
				1&0
			\end{pmatrix}
			n^{-1}
			+\mathcal O\bigl((u-v)n^{-2}\bigr).
		\end{equation}
		% The estimate is uniform for $u,v$ in compact subsets of $[0,\infty)$.

		Finally, substituting \eqref{eq:hard-total-first-product} into \eqref{hard-kernel-representation}, the first correction reads
		\[
			\frac{1}{2\pi\ii(u-v)}
			\begin{pmatrix}\pi\ii\phi_1(v)&\phi_0(v)\end{pmatrix}
			\frac{\alpha\pi\ii}{\beta h_Q(0)}(u-v)
			\begin{pmatrix}0&0\\1&0\end{pmatrix}
			\begin{pmatrix}\phi_0(u)\\-\pi\ii\phi_1(u)\end{pmatrix}
			=
			\frac{\alpha}{2\beta h_Q(0)}\phi_0(u)\phi_0(v).
		\]
		This proves \eqref{eq:hard-L1-explicit} and completes the proof of Theorem~\ref{hardthm}. \qed
	%\end{proof}

	% \begin{remark}
	% 	For the classical Laguerre case $Q(x)=x$, one has $\beta=4$, $h_Q(0)=1$, and therefore the coefficient in \eqref{eq:hard-L1-explicit} reduces to $\alpha/8$. For the quadratic hard-edge potential $Q(x)=x^2$, the equilibrium density is obtained from
	% 	\[
	% 		W(z)=z-\frac12(2z+\beta)\sqrt{\frac{z-\beta}{z}}
	% 		=\frac1z+\mathcal O(z^{-2}),\qquad z\to\infty,
	% 	\]
	% 	which gives $\beta=2\sqrt6/3$ and $h_Q(x)=2x+\beta$. Hence $h_Q(0)=\beta$ and the first coefficient is $\alpha/(2\beta^2)=3\alpha/16$.
	% \end{remark}

\subsection{Proof of Corollary~\ref{hard edge lifting}}
By the definition of $\widetilde F(n;t)$ in \eqref{eq:hard-distribution-expansion}, its Fredholm determinant representation is
		\begin{equation*}
			\widetilde F(n;t)=\det(\mathbf I-\widetilde{\mathbf K}_n),
		\end{equation*}
        where $\widetilde{\mathbf K}_n$ is the integral operator acting on $L^2(0,t)$ associated with the rescaled kernel $\widetilde{K}_n$.
		% For the determinantal point process with rescaled kernel $\widetilde K_n$, the hard-edge gap probability is
		% \begin{equation*}
		% 	\widetilde F(n;t)=\det(\mathbf I-\widetilde{\mathbf K}_n).
		% \end{equation*}
		Fix $s>0$ and let $\nu_\alpha(\ud x):=x^\alpha \ud x$. For $0<t\le s$, define the unitary map
		\begin{equation*}
			U:L^2(0,t)\to L^2((0,t),\nu_\alpha),
			\qquad
			(Uf)(x)=x^{-\alpha/2}f(x).
		\end{equation*}
		With respect to $\nu_\alpha$, the operator $U\widetilde{\mathbf K}_nU^{-1}$ has kernel
		\begin{equation*}
			\mathcal K_n^{(\alpha)}(u,v):=(uv)^{-\alpha/2}\widetilde K_n(u,v).
		\end{equation*}
		Consequently,
		\begin{equation*}
			\det(\mathbf I-\widetilde{\mathbf K}_n)
			=
			\det\left.(\mathbf I-\bm{\mathcal K}_n^{(\alpha)})\right|_{L^2((0,t),\nu_\alpha)}.
		\end{equation*}
        
		Define the truncated conjugated kernel by
		\begin{equation*}
			\mathcal K_{n,m}^{(\alpha)}(u,v):=(uv)^{-\alpha/2}\left(K_{\Bes}(u,v)+\sum_{j=1}^{m}\widetilde L_j(u,v)n^{-j}\right).
		\end{equation*}
		The kernels $\mathcal K_n^{(\alpha)}$ and $\mathcal K_{n,m}^{(\alpha)}$ extend continuously to $[0,s]^2$ and are uniformly bounded there. Since $\nu_\alpha((0,t))\leq\nu_\alpha((0,s))<\infty$, the Lipschitz constant in the finite-measure Fredholm determinant estimate is independent of $t$; see \cite[Section~2, paragraph following (4)]{Bornemann16} and \cite[Lemma~3.4.5]{AGZ10}. Hence \eqref{expansion-hard-weighted} gives, uniformly for $t\in[0,s]$,
		\begin{equation*}
			\det(\mathbf I-\widetilde{\mathbf K}_n)
			=
			\det\left.(\mathbf I-\bm{\mathcal K}_{n,m}^{(\alpha)})\right|_{L^2((0,t),\nu_\alpha)}
			+\mathcal{O}(n^{-m-1}).
		\end{equation*}
		% The resolvent of the limiting Bessel operator depends continuously on $t$ and, since $F_{\Bes}(t)>0$, is uniformly bounded for $t\in[0,s]$. The unitary conjugation preserves the finite-rank correction structure and the associated coefficient functions. 
		Applying Proposition~\ref{prop:finite-rank-det-template} with $\eta=n^{-1}$ to the weighted operators gives
		\begin{equation*}
			\widetilde F(n;t)
			=
			F_{\Bes}(t)\left(1+\sum_{j=1}^{m} d_j(t)n^{-j}\right)+\mathcal{O}(n^{-m-1}).
		\end{equation*}
		This expansion is uniform for $t\in[0,s]$. 
		% The functions $d_j$ are smooth for $t>0$; at $t=0$, the correction operators vanish, so they extend continuously with $d_j(0)=0$. Setting $F_{\Bes,j}(t):=F_{\Bes}(t)d_j(t)$ gives the expansion stated in the corollary.

		To compute the first coefficient, we obtain by applying \eqref{eq:generic-d1} to the rank-one correction \eqref{eq:hard-L1-explicit} that
		\begin{equation}\label{eq:hard-d1-v00-proof}
			d_1(t)
			=
			-\frac{\alpha}{2\beta h_Q(0)}v_{00}(t),
			\qquad
			v_{00}(t):=\left\langle(\mathbf I-\mathbf K_{\Bes})^{-1}\phi_0,\phi_0\right\rangle_{L^2(0,t)}.
		\end{equation}
		The Bessel endpoint identity \cite[(6)]{Bornemann16} (see also \cite[(1.5), (2.5), (2.21)]{TracyWidom94Bessel}) gives
		\begin{equation}\label{eq:v00-logderivative}
			\frac{\operatorname{d}\!}{\operatorname{d}\!t}\log F_{\Bes}(t)
			=
			-\frac{1}{4t}v_{00}(t).
		\end{equation}
		Since $F_{\Bes,1}(t)=F_{\Bes}(t)d_1(t)$, combining \eqref{eq:hard-d1-v00-proof} and \eqref{eq:v00-logderivative} gives \eqref{eq:hard-d1-explicit}. 
        
        This completes the proof of Corollary~\ref{hard edge lifting}.
        \qed

	\section*{Acknowledgements}
	We thank Folkmar Bornemann, Sung-Soo Byun, Peter Forrester and Ben Liu for helpful suggestions and discussions.  This work is partially supported by NSFC under grant numbers 12271105 and 11822104,
	
	\appendix
	\section{The Airy parametrix}\label{airy}
	The Airy parametrix $\Phi^{({\Ai})}$ is the unique solution of the following RH problem.
	\begin{rhp}
		\hfill
		\begin{itemize}
			\item[\rm(a)] $\Phi^{(\mathrm{Ai})}(z)$ is analytic in $\mathbb{C} \setminus \left(\bigcup_{j=1}^4 \Sigma_j \cup \{0\}\right)$, where the contours $\Sigma_j$, $j=1, 2, 3, 4$, are indicated in Figure \ref{fig:Airy}.
			\item[\rm(b)] For $z \in \cup_{j=1}^{4} \Sigma_j$, we have
			\begin{align}\label{jump:Airy}
				\Phi^{({\Ai})}_+(z)=\Phi^{({\Ai})}_-(z)\begin{cases}
					\begin{pmatrix}
						1 & 1\\
						0 & 1
					\end{pmatrix}, & \qquad z \in \Sigma_1,\\
					\begin{pmatrix}
						1 & 0\\
						1& 1
					\end{pmatrix}, & \qquad z \in \Sigma_2 \cup \Sigma_4,\\
					\begin{pmatrix}
						0 & 1\\
						-1 & 0
					\end{pmatrix}, & \qquad z \in \Sigma_3.
				\end{cases}
			\end{align}
			\item[\rm(c)] As $z \to \infty$, we have
			\begin{align}\label{infty:Ai}
				\Phi^{({\Ai})}(z) = \frac{1}{\sqrt{2}} \begin{pmatrix}
					z^{-\frac 14} & 0\\
					0 & z^{\frac 14}
				\end{pmatrix} \begin{pmatrix}
					1 & \ii\\
					\ii & 1
				\end{pmatrix}\left(I + \Boh(z^{-\frac 32})\right)e^{-\frac 23 z^{3/2} \sigma_3}.
			\end{align}
			\item[\rm(d)] $\Phi^{({\Ai})}(z)$ is bounded near the origin.
		\end{itemize}
	\end{rhp}
	
	\begin{figure}[t]
		\begin{center}
			
			\tikzset{every picture/.style={line width=0.75pt}}
			
			\begin{tikzpicture}[x=0.75pt,y=0.75pt,yscale=-1,xscale=1]
				\draw    (52,120) -- (203,120) -- (333,120) ;
				
				\draw [shift={(132.5,120)}, rotate = 180] [fill={rgb, 255:red, 0; green, 0; blue, 0 }] (7.14,-3.43) -- (0,0) -- (7.14,3.43) -- cycle ;  
				
				\draw [shift={(273,120)}, rotate = 180] [fill={rgb, 255:red, 0; green, 0; blue, 0 }  ](7.14,-3.43) -- (0,0) -- (7.14,3.43) -- cycle    ;
				
				\draw    (116.5,20) -- (216.5,120) ;
				
				\draw [shift={(170.04,73.54)}, rotate = 225] [fill={rgb, 255:red, 0; green, 0; blue, 0 }  ](7.14,-3.43) -- (0,0) -- (7.14,3.43) -- cycle    ;
				
				\draw    (117.5,208) -- (216.5,120) ;
				
				\draw [shift={(170.74,160.68)}, rotate = 138.37] [fill={rgb, 255:red, 0; green, 0; blue, 0 }  ](7.14,-3.43) -- (0,0) -- (7.14,3.43) -- cycle    ;
				
				\draw  [fill={rgb, 255:red, 0; green, 0; blue, 0 }  ,fill opacity=1 ] (216.67,119.83) .. controls (216.67,119.19) and (216.15,118.66) .. (215.5,118.66) .. controls (214.85,118.66) and (214.33,119.19) .. (214.33,119.83) .. controls (214.33,120.48) and (214.85,121) .. (215.5,121) .. controls (216.15,121) and (216.67,120.48) .. (216.67,119.83) -- cycle ;
				
				\draw (218.5,123) node [anchor=north west][inner sep=0.75pt]   [align=left] {$0$};
				\draw (343,117) node [anchor=north west][inner sep=0.75pt]   [align=left] {$\Sigma_1$};
				\draw (98,14) node [anchor=north west][inner sep=0.75pt]   [align=left] {$\Sigma_2$};
				\draw (33,113) node [anchor=north west][inner sep=0.75pt]   [align=left] {$\Sigma_3$};
				\draw (95,205) node [anchor=north west][inner sep=0.75pt]   [align=left] {$\Sigma_4$};

			\end{tikzpicture}
			\caption{The jump contours of the RH problem for $\Phi^{({\Ai})}$.}
			\label{fig:Airy}
		\end{center}
	\end{figure}
	
	Let $\omega:= e^{2 \pi \ii/3}$. The unique solution is given by (cf. \cite{Deift99})
\begin{align}\label{eq:airy-parametrix-sector-form}
		\Phi^{({\Ai})}(z) = \sqrt{2 \pi} \begin{cases}
			\begin{pmatrix}
				\Ai (z) & - \omega^2 \Ai (\omega^2 z)\\
				-\ii \Ai'(z) & \ii \omega \Ai' (\omega^2 z)
			\end{pmatrix}, & \arg z \in \left(0, \frac{3 \pi}{4} \right),\\
			\begin{pmatrix}
				-\omega \Ai (\omega z) & - \omega^2 \Ai (\omega^2 z)\\
				\ii \omega^2 \Ai'(\omega z) & \ii \omega \Ai' (\omega^2 z)
			\end{pmatrix}, & \arg z \in \left(\frac{3 \pi}{4}, \pi \right),\\
			\begin{pmatrix}
				-\omega^2 \Ai (\omega^2 z) & \omega \Ai (\omega z)\\
				\ii \omega \Ai'(\omega^2 z) & -\ii \omega^2 \Ai' (\omega z)
			\end{pmatrix}, & \arg z \in \left(-\pi, -\frac{3 \pi}{4} \right),\\
			\begin{pmatrix}
				\Ai (z) &  \omega \Ai (\omega z)\\
				-\ii \Ai'(z) & -\ii \omega^2 \Ai' (\omega z)
			\end{pmatrix}, & \arg z \in \left(-\frac{3 \pi}{4}, 0\right).
		\end{cases}
	\end{align}
	
	\section{The Bessel parametrix}
	\label{bessel}
	The Bessel parametrix is the unique solution of the following RH problem.
	
	\begin{rhp}\label{rhp:Bessel}
		\hfill
		\begin{itemize}
			\item [\rm(a)] $\Phi_{\alpha}^{(\Bes)}(z)$ is defined and analytic in $\mathbb{C} \setminus \left(\bigcup_{j=1}^3 \Gamma_j \cup \{0\}\right)$, where the contours $\Gamma_j$, $j=1,2,3$, are shown in Figure \ref{figure-Bessel}.
			\item [\rm(b)] For $z \in \bigcup_{j=1}^3 \Gamma_j$, we have
			\begin{equation}\label{eq:jump:Bessel}
				\Phi_{\alpha,+}^{(\Bes)}(z) = \Phi_{\alpha, -}^{(\Bes)}(z) \begin{cases}
					\begin{pmatrix}
						1 & 0\\
						e^{\alpha \pi \ii} & 1
					\end{pmatrix}, \quad & z \in \Gamma_1,\\
					\begin{pmatrix}
						0 &1\\
						-1 & 0
					\end{pmatrix}, \quad & z \in \Gamma_2,\\
					\begin{pmatrix}
						1 & 0\\
						e^{-\alpha \pi \ii} & 1
					\end{pmatrix}, \quad & z \in \Gamma_3,
				\end{cases}
			\end{equation}
			where $\alpha>-1$. 
			\item [\rm(c)] As $z \to \infty$, we have
			\begin{multline}\label{eq:infty:Bessel}
				\Phi_{\alpha}^{(\Bes)}(z)
				= \frac{(4\pi^2 z)^{-\sigma_3/4}}{\sqrt{2}}
				\begin{pmatrix}
					1 & -\ii\\
					-\ii & 1
				\end{pmatrix} \\
				\times
				\left(I + \frac{1}{16z^{1/2}}
				\begin{pmatrix}
					-1-4\alpha^2 & 2 \ii\\
					2 \ii & 1+4\alpha^2
				\end{pmatrix} + \Boh\left(\frac{1}{z}\right) \right)e^{2z^{1/2}\sigma_3}.
			\end{multline}
		\end{itemize}
	\end{rhp}
		By \cite[Eq.~(3.81)]{Vanlessen07}, we have
\begin{equation}\label{eq:bessel-parametrix-sector-form}
	 \Phi_\alpha^{(\Bes)}(z)= \begin{cases}
    \begin{pmatrix} I_\alpha(2z^{\frac{1}{2}})&  -\frac{\ii}{\pi} K_\alpha(2z^{\frac{1}{2}})\\-2\pi \ii z^{\frac{1}{2}}I_\alpha'(2z^{\frac{1}{2}})&  -2z^{\frac{1}{2}}K_\alpha'(2z^{\frac{1}{2}})\end{pmatrix}, & z \in \mathrm{I}, \\
    \begin{pmatrix} \frac{1}{2}H_\alpha^{(1)}(2(-z)^{\frac{1}{2}})&  -\frac{1}{2} H_\alpha^{(2)}(2(-z)^{\frac{1}{2}})\\-\pi z^{\frac{1}{2}}(H_\alpha^{(1)})'(2(-z)^{\frac{1}{2}})&  \pi z^{\frac{1}{2}}(H_\alpha^{(2)})'(2(-z)^{\frac{1}{2}})\end{pmatrix} e^{\frac{\alpha \pi \ii}{2} \sigma_3}, & z \in \mathrm{II}, \\
    \begin{pmatrix} \frac{1}{2}H_\alpha^{(2)}(2(-z)^{\frac{1}{2}})&  \frac{1}{2} H_\alpha^{(1)}(2(-z)^{\frac{1}{2}})\\ \pi z^{\frac{1}{2}}(H_\alpha^{(2)})'(2(-z)^{\frac{1}{2}})&  \pi z^{\frac{1}{2}}(H_\alpha^{(1)})'(2(-z)^{\frac{1}{2}})\end{pmatrix}e^{-\frac{\alpha \pi \ii}{2} \sigma_3}, & z \in \mathrm{III},
  \end{cases}
\end{equation}
% where the domains $\mathrm{I}$--$\mathrm{III}$ are illustrated in Figure~\ref{figure-Bessel}.
where the domains I--III are illustrated in Figure \ref{figure-Bessel},
$I_\alpha$ and $K_\alpha$ are the modified Bessel functions, and $H_\alpha^{(1)}$ and
$H_\alpha^{(2)}$ are the Hankel functions of the first and second kind, respectively (cf. \cite{DLMF} for the definitions and properties of these functions) and the  principal branch is taken for $z^{1/2}$.
	\begin{figure}[t]
    \centering
    
    \tikzset{every picture/.style={line width=0.75pt}}
    
    \begin{tikzpicture}[x=0.75pt,y=0.75pt,yscale=-1,xscale=1]
        \draw (100,130) -- (269.5,131) ;
        \draw [shift={(188.55,130.52)}, rotate = 180.34] [fill={rgb, 255:red, 0; green, 0; blue, 0 }] (-7.14,-3.43) -- (0,0) -- (-7.14,3.43) -- cycle ; 
        
        \draw (169.12,29.61) -- (269.5,131) ;
        \draw [shift={(221.98,83.01)}, rotate = 225.29] [fill={rgb, 255:red, 0; green, 0; blue, 0 }] (-7.14,-3.43) -- (0,0) -- (-7.14,3.43) -- cycle ; 
        
        \draw (269.5,131) -- (171.12,231.61) ;
        \draw [shift={(224.01,177.52)}, rotate = 134.36] [fill={rgb, 255:red, 0; green, 0; blue, 0 }] (-7.14,-3.43) -- (0,0) -- (-7.14,3.43) -- cycle ; 
        
        \draw [fill={rgb, 255:red, 0; green, 0; blue, 0 } ,fill opacity=1 ][line width=0.75]  (267.5,131.5) .. controls (267.5,130.67) and (268.17,130) .. (269,130) .. controls (269.83,130) and (270.5,130.67) .. (270.5,131.5) .. controls (270.5,132.33) and (269.83,133) .. (269,133) .. controls (268.17,133) and (267.5,132.33) .. (267.5,131.5) -- cycle ;
        
        \draw (281,80) node [anchor=north west][inner sep=0.75pt]   [align=left] {I};
        \draw (271.5,133) node [anchor=north west][inner sep=0.75pt]   [align=left] {0};
        \draw (145,78) node [anchor=north west][inner sep=0.75pt]   [align=left] {II};
        \draw (144,156) node [anchor=north west][inner sep=0.75pt]   [align=left] {III};
        \draw (175,13) node [anchor=north west][inner sep=0.75pt]   [align=left] {$\Gamma_1$};
        \draw (80,122) node [anchor=north west][inner sep=0.75pt]   [align=left] {$\Gamma_2$};
        \draw (181,232) node [anchor=north west][inner sep=0.75pt]   [align=left] {$\Gamma_3$};
        
    \end{tikzpicture}
    \caption{The jump contours $\Gamma_j$, $j=1,2,3$, and the domains I--III in the RH problem for $\Phi_{\alpha}^{(\text{Bes})}$.}
    \label{figure-Bessel}
\end{figure}

\end{document}